\begin{document}

\begin{center}
    {\Huge Nonrelativistic Limit  of  Normalized Solutions to a class of nonlinear Dirac equations}
\end{center}
\begin{center}
 {~~Pan Chen$^{b,c}$, Yanheng Ding$^{b,d}$, Qi Guo$^{a}\footnote[1]{Corresponding author: qguo@ruc.edu.cn}$ and Hua-Yang Wang$^{b,c}$}\\
\small School of Mathematics, Renmin University of China, Beijing, 100872, P.R. China$^{a}$\\
 \small Academy of Mathematics and System Science, Chinese Academy of Sciences, Beijing, 100190, P.R. China$^{b}$\\
 \small University of Chinese Academy of Sciences, Beijing, 100049, P.R. China$^{c}$\\
\small School of Mathematics, Jilin University, Changchun, 130012, P.R. China$^{d}$
\end{center} 
 
{ \bf Abstract  }:
{\small In this paper, we investigate the nonrelativistic limit of normalized solutions to a nonlinear Dirac equation as given below:
\begin{equation*}
  \begin{cases}
&-i c\sum\limits_{k=1}^3\alpha_k\partial_k u  +mc^2 \beta {u}- \Gamma * (K |{u}|^\kappa) K|{u}|^{\kappa-2}{u}- P |{u}|^{s-2}{u}=\omega {u}, \\
&\displaystyle\int_{\mathbb{R}^3}\vert u \vert^2 dx =1.
  \end{cases}
\end{equation*}
Here, $c>0$ represents the speed of light, $m > 0$ is the mass of the Dirac particle, $\omega\in\mathbb{R}$ emerges as an indeterminate Lagrange multiplier, $\Gamma$, $K$, $P$ are real-valued function defined on $\mathbb{R}^3$, also known as potential functions.
Our research first confirms the presence of normalized solutions to the Dirac equation under high-speed light conditions. We then illustrate that these solutions progress to become the ground states of a system of nonlinear Schr\"odinger equations with a normalized constraint, exhibiting uniform boundedness and exponential decay irrespective of the light speed. Our results form the first discussion on  nonrelativistic limit of normalized solutions to nonlinear Dirac equations. This not only aids in the study of normalized solutions of the nonlinear Schr\"odinger equations, but also physically explains that the normalized ground states of high-speed particles and low-speed motion particles are consistent.} \\

\noindent {\bf Keywords}:  {\small  Nonlinear Dirac equations,   Normalized solutions,  Nonrelativistic limit. }

\noindent {\bf AMS} Subject Classification: \small 35Q40  35J50 49J35.
 
\section{Introduction}

In this paper, we study the following stationary nonlinear Dirac equation:
\begin{equation}\tag{NDE}\label{eq_Dirac_g}
-i  c\sum\limits_{k=1}^3\alpha_k\partial_k u  +mc^2 \beta {u}- \omega {u}=\Gamma * (K |{u}|^\kappa) K|{u}|^{\kappa-2}{u}+ P |{u}|^{s-2}{u},
\end{equation}
where $ u : \mathbb{R}^3 \rightarrow \mathbb{C}^4$, $\partial_k=\frac{\partial}{\partial x_k}$,  $*$ stands for the convolution, $\omega\in\mathbb{R}$ and $\alpha_1, \alpha_2,\alpha_3,\beta$ are $4\times 4$ Pauli-Dirac matrices:
$$
\alpha_k=\left(\begin{array}{cc}
0 & \sigma_k \\
\sigma_k & 0
\end{array}\right), \quad \beta=\left(\begin{array}{cc}
I & 0 \\
0 & -I
\end{array}\right),
$$
with
$$
\sigma_1=\left(\begin{array}{cc}
0 & 1 \\
1 & 0
\end{array}\right), \quad\sigma_2=\left(\begin{array}{cc}
0 & -i \\
i & 0
\end{array}\right), \quad\sigma_3=\left(\begin{array}{cc}
1 & 0 \\
0 & -1
\end{array}\right) .
$$
We aim to search for normalized (weak) solutions of \eqref{eq_Dirac_g} which is defined in  section 3 and study the limit properties of these normalized solutions. 
 

\subsection{Motivations}
Nonlinear versions of the Dirac equation have been extensively studied since the invention of the original equation to incorporate additional effects and interactions, see \cite{WOS:A1988P347300005,WOS:000267267900023,WOS:000276973700004}. It is noteworthy that in Schr\"odinger equations, nonlinearities occur as approximations in optics and condensed matter \cite{WOS:000230275500030}. At the quantum mechanical level, researchers hope to detect quantum nonlinearities at high energy or very short distances, neutrino oscillations may be linked to quantum nonlinearities \cite{WOS:000276973700004}. 
Nonlinear Dirac equations serve as influential theories in various fields, including atomic, nuclear, and gravitational physics \cite{Finkelstein1}. These equations depict the behavior of fermions subjected to intense gravitational forces, providing invaluable insights into the behavior of matter under extreme conditions. Additionally, nonlinear Dirac equations find applications in condensed matter physics, where they effectively describe the actions of electrons in specific materials\cite{WOS:000267267900023}.
One of its general forms is
\begin{align*}\label{nonlinear}
i  c\hbar \gamma^\mu\partial_\mu\psi-mc^2  \psi- G_{\bar{\psi}}+\partial_\mu ( G_{\partial_\mu \bar{\psi}})=0,
\end{align*}
where $\psi:\mathbb{R}^{3+1}\rightarrow \mathbb{C}^4$ represents the wave function of the state of Dirac particles, such as electron, $\bar{\psi}$ is the Dirac adjoint of $\psi$, $m>0$ is the mass of the particle, $c$ is the speed of the light,
$\hbar$ is the Planck constant, $G$ is a function of the wave function $\psi$, its adjoint and their derivatives.
If $G$ satisfies
\[
\overline{G_\psi}-\partial_\mu (\overline{G_{\partial \mu\psi}})=G_{\bar{\psi}}-\partial_\mu ( G_{\partial_\mu \bar{\psi}}).
\]
Then the nonlinear Dirac equation can be obtained from the action with the Lagrangian density given by
\[\mathcal{L}=ic\hbar \bar{\psi} \gamma^\mu \partial_\mu \psi-mc^2\bar{\psi}\psi- G(\psi,\bar{\psi},\partial_\mu \psi,\partial_\mu\bar{\psi}).\]
To construct nonlinearities of Dirac equations, it is necessary to take into account the fundamental properties of linear theory, such as Poincar\'e invariance, current conservation, and $\mathcal{PCT}$ invariance. However, by imposing additional conditions on the function $G$, it is possible to preserve some of these properties, as described in \cite{WOS:000267267900023}.  These conditions are crucial for constructing physically meaningful nonlinear Dirac equations, and their study can lead to a better understanding of the behavior of fermions in complex systems. 
The linear Dirac equation exhibits invariance under wavefunction rescaling. This feature in quantum mechanics permits the normalization of solutions, which is not only advantageous but also occasionally necessitated for the interpretation of experimental observations \cite{MR516997, WOS:A1989AU60000005}. Due to the fact that nonlinear terms typically do not possess scaling invariance, we need to introduce the additional constraint, that is $$\displaystyle \int_{\mathbb{R}^3} |\psi(t,x)|^2 dx=1,$$ to find normalized solutions for the nonlinear Dirac equation.

\medskip
We search for solitary solutions or solitons of the nonlinear Dirac equations, which take the form $$\psi(t,x)=e^{i\omega t}u(x).$$ Here $\omega$ is known as the frequency of the wave function.  We simplify the model by setting $\hbar=1$ and using a function $F:\mathbb{R}^3\times\mathbb{C}^4\rightarrow \mathbb{R}$ instead of $G$. Using this simplified model, we only need to search for solutions to the following stationary nonlinear Dirac equations:
 
\begin{equation*}\label{Dirac}
-i  c\sum\limits_{k=1}^3\alpha_k\partial_k u  +mc^2 \beta {u}-\omega {u}=F_u(x,u).
\end{equation*}
The Dirac-Slater model is a widely used quantum mechanical model for determining the binding energies and eigenfunctions of molecules. It has a nonlinear term arising from the potential's dependence on the electron distribution, which makes it a valuable tool for understanding the behavior of electrons within molecular systems. Relativistic effects become significant for atomic systems with a nuclear charge greater than 30, and the Dirac-Slater model has been applied in atomic research and tested on simple molecules. It is also used to describe atomic levels and energy band structure in crystals. The uranyl ion $UO_2^{2+}$ has garnered significant attention in both theoretical and experimental studies due to its remarkable stability in solutions and its presence as a distinctive complex in solid compounds \cite{Ellis}. In terms of the theory of Dirac-Slater model, the scalar potential $V$ is taken to be a sum of Coulomb term $V_c(r)$ and exchange term $V_x(r)$. That is  
\[V(r)=V_c(r)+V_x(r),\]
and the Dirac-Slater model is defined by a free-electron exchange approximation given by 
\[V_x(r)=-3C_{KS} \left(\frac{3\rho(r)}{4\pi}\right)^{1/3}.\]
Here $\rho(r)$ is the total charge density and $C_{KS} $ is the Kohn-Sham exchange coefficient.
The relativistic correction to the exchange potential in the Dirac-Slater equation was developed in \cite{Ellis}. The unpolarized relativistic exchange potential in the Dirac-Kohn-Sham-Slater model can be expressed approximately as 
\[V_x^{rel}(r)=-3 \left(\frac{3\rho(r)}{4\pi}\right)^{1/3}\left ( 1-\frac{\lambda}{9}-\frac{7\lambda^2}{180}\right),\]
where $\lambda =C_{KS} ^2k^2/(C_{KS} ^2 k^2+1)$, the Fermi momentum $k=(6\pi^2\rho)^{1/3}$, see \cite{diracslater3}. In terms of the wave function $\psi$, the total electron density can be expressed as 
\[\rho_\sigma =\sum_{\sigma} N_\sigma |\psi_{i,\sigma}|^2,\]
where $\sigma=\uparrow$ or $\downarrow$, $N_\sigma$ is the occupation numbers of spin up ($\uparrow$) or down ($\downarrow$) electrons.  In atomic units, the one-particle Dirac-Slater equation becomes 
\[ -i  c\sum\limits_{k=1}^3\alpha_k\partial_k u +mc^2\beta\psi +V_c\psi - C_{ex} |\psi|^{2/3} \psi=\omega \psi,\]
where $C_{ex}=3C_{KS}\left(\frac{3N}{4\pi}\right)^{1/3}$. The idea of nonlinear term in the Dirac-Slater model comes from two parts, one is the influence of the particle's own charge, and the other is the nonlinear effect from different molecules. The charge's influence on the particle's motion is a non-local nonlinear effect, while the perturbation of external fields is a local nonlinear effect. Now we delve into the nonlinear extension of this model, which serves as the primary focus of our paper. It has the following form:
 \[F(x,u)=\frac{1}{2\kappa}\Gamma * (K |{u}|^\kappa)(x) K(x)|{u}|^{\kappa}+ \frac{1}{s}P(x) |{u}|^{s}.\]

There are two main approaches to study solutions of the equation \eqref{eq_Dirac_g}. The first is known as the \emph{Fixed Frequency Problem}, which involves fixing $\omega \in (-mc^2,mc^2)$ and searching for solutions $u$ to the equation.
Esteban and S\'{e}r\'{e} \cite{MR1344729} were the first to introduce variational methods to study stationary solutions  of the nonlinear Dirac equation.
Later, Esteban, Georgiev, and S\'{e}r\'{e} extended this method to Dirac-Maxwell and  Dirac-Klein-Gordon systems in \cite{MR1386737}.
Following the works \cite{MR1386737} and \cite{MR1344729}, a large number of papers have been devoted to the study of the existence and multiplicity of stationary solutions of the Dirac equation with both local and nonlocal nonlinearities under
various assumptions using variational methods, for example, see \cite{MR2232435, MR4256273, Pan_2022, MR2434900, MR4327893} and references therein.
It is worth pointing out that the action functional of the Dirac equation \eqref{eq_Dirac_g} is strongly indefinite and one can apply variational methods in \cite{MR2389415} to study the existence and multiplicity of solutions.
On the other hand, a natural way is to fix the $L^2$-norm of $u$ and solve \eqref{eq_Dirac_g} with an unknown number $\omega\in \mathbb{R}$, which arises as a Lagrange multiplier.
We usually call it the \emph{Normalized Solutions Problem}.
 Buffoni,  Esteban and S\'{e}r\'{e}  \cite{MR2219842} dealt with the existence of normalized solutions of strongly indefinite semilinear elliptic equations
via an unconstrained penalization, a variational Lyapunov-Schmidt reduction, and a mountain-pass theorem.
Recently, Ding, Yu and Zhao considered the existence of normalized solutions to a nonlinear Dirac equation in \cite{MR4523529}.
Actually, our idea mainly comes from Coti Zelati and  Nolasco, they studied the existence of a ground state solution to the Dirac-Maxwell system in \cite{MR3953461} and also studied normalized solutions for the Klein–Gordon–Dirac system in \cite{MR4628735}.
 Nolasco  \cite{MR4327893} considered the existence of normalized solutions to a Dirac-Maxwell equation.

\medskip

The nonrelativistic limit arises when the velocity of a particle is significantly lower than the speed of light. In this limit, the influence of special relativity becomes negligible, and classical mechanics suffices to explain the particle's motion. By considering the nonrelativistic limit, we assume that the particle's velocity is much smaller than the speed of light. Consequently, the laws of special relativity align with Newtonian physics, and relativistic effects can be disregarded. This simplification allows for a more straightforward and intuitive analysis of the particle's behavior compared to a full relativistic treatment. There are many works studied the nonrelativistic limit of the Cauchy problem for partial differential equations arising from relativistic quantum mechanics, see \cite{MR2134957,MR1895710,MR1993419,MR1151464}.
Some authors also studied the nonrelativistic limit of stationary solutions to nonlinear Dirac equations. For example, Esteban and S\'{e}r\'{e} \cite{MR1869528} described a limiting process that shows how solutions of Dirac-Fock equations converge toward the corresponding solutions of Hartree-Fock equations when the speed of light tends to infinity. This work is based on the existence results of normalized solutions of the Dirac-Fock system in \cite{MR1869528}.
Borrelli, Carlone and Tentarelli \cite{MR3934110, WOS:000613291600011} studied the existence and multiplicity of the bound states to nonlinear Dirac equations on non-compact metric graphs with Kirchhoff-type conditions and they proved that these bound states converge to the bound states of the nonlinear Schr\"odinger equation in the nonrelativistic limit.  
Recently,  Dong, Ding and Guo \cite{MR4280523, dongdingguo} studied the nonrelativistic limit and some properties of solutions for the stationary Dirac equation with a local nonlinearity.

\medskip
Motivated by the above papers, the purpose of this paper is to study the nonrelativistic limit of normalized solutions of the Dirac equation with both local and nonlocal nonlinearities.
Compared with the {\it Fixed Frequency Problem}, it seems easier and more reasonable to consider the nonrelativistic limit of the Dirac equation from the viewpoint of normalized solutions.
Indeed, we shall describe the relation between the frequency $\omega$ and the light speed $c$, which is more natural than that arising as a basic assumption in \cite{MR3934110,MR4280523}.


\subsection{Main Resutls}
We now consider the following constraint Dirac equation
\begin{align}\label{eq:Dirac}\tag*{$(NDE)_c$}
  \begin{cases}
&-i c\sum\limits_{k=1}^3\alpha_k\partial_k u  +mc^2 \beta {u}- \Gamma * (K |{u}|^\kappa) K|{u}|^{\kappa-2}{u}- P |{u}|^{s-2}{u}=\omega {u}, \\
&\displaystyle\int_{\mathbb{R}^3}\vert u \vert^2 dx =1.
  \end{cases}
\end{align}
To state our main theorems, we need the following assumptions on the nonlinearities:
\medskip

 ($K_1$) $K\in \mathcal{C}^1(\mathbb{R}^3,(0,+\infty))$ and $\lim\limits_{|x|\to \infty}K(x)=0$.

($P_1$) $P \in \mathcal{C}^1(\mathbb{R}^3,(0,+\infty))$ and $\lim\limits_{|x|\to \infty}P(x) =0$.

($P_2$) There exist a constant $C >0$, a number $\mu \in \left(0, \frac{10-3s}{2}\right)$ such that for small $\varepsilon>0$, and all $x\in \mathbb{R}^3$, it holds that $P ( x)\geq  C \varepsilon^\mu P(\varepsilon x)$.

$(\Gamma_1)$ $\Gamma \in L_w^{6/(14-6\kappa)}(\mathbb{R}^3)\cap \mathcal{C}(\mathbb{R}^3\backslash\{0\},(0,+\infty)).$

\medskip
Here, $L_{w}^p(\mathbb{R}^3)$
is the weak $L^p$ space with $1<p<+\infty$ and norm $$\|u\|_{p,w}:=\sup\limits_{t>0} \left(t\mu^{1/p}_L\{x\in \mathbb{R}^3: |u(x)|>t\}\right),$$ 
where $\mu_L$ is the usual Lebesgue measure on $\mathbb{R}^3$. 
We give some examples which satisfy our assumptions:
\medskip

(i) $K(x)=e^{-a|x|}$, where $a> 0$. 

(ii) $P(x)=\frac{1}{1+|x|^\mu}$, where $\mu$ is given in assumption $(P_2)$.

(iii) $\Gamma(x)=\frac{1}{|x|^\tau  }$, where $\tau\in(0, 7-3\kappa)$.

\medskip

There are many other examples of functions defined on the real number line that vanish at infinity, i.e. Gaussian function: $K(x) = e^{-|x|^2}$, arctangent function: $K(x) = \frac{\arctan(|x|)}{|x|}$. 
These functions are commonly used due to their interesting and useful properties. Now, it is time to describe our main results.
\begin{theorem}\label{them:1.1}
  Set $\kappa\in [2,7/3), s\in (2,8/3]$. If assumptions $(K_1), (P_1), (P_2)$ and $(\Gamma_1)$ hold, then for a given $c>0$ large enough,  there exists $\omega_c\in (0,mc^2)$ and a function $u_{c}\in H^{1}(\mathbb{R}^3,\mathbb{C}^4)$, such that
  $(\omega_c, u_{c})$ is a normalized solution of  \ref{eq:Dirac}.
  In addition, we have
  $$-\infty < \liminf\limits_{c\to \infty} (\omega_c-mc^2)\leqslant \limsup\limits_{c\to \infty} (\omega_c-mc^2)< 0.$$
\end{theorem}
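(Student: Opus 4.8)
The plan is to trade the strongly indefinite Dirac action for a definite constrained minimization by eliminating the lower spinor component, and then to read off $\omega_c$ as a Lagrange multiplier whose distance from the essential–spectrum threshold $mc^2$ is controlled uniformly in $c$. First I would split $u=(\varphi,\chi)$ with $\varphi,\chi\in H^1(\mathbb{R}^3,\mathbb{C}^2)$ and record that the right–hand side acts as multiplication by the scalar $V_u:=\Gamma*(K|u|^\kappa)K|u|^{\kappa-2}+P|u|^{s-2}\ge 0$, coupling the two blocks only through $|u|^2=|\varphi|^2+|\chi|^2$. Writing $\mathcal D:=-i\sum_{k=1}^3\sigma_k\partial_k$, the lower line of \ref{eq:Dirac} reads $(mc^2+\omega+V_u)\chi=c\,\mathcal D\varphi$. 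Since $mc^2+\omega>mc^2$ is large while $V_u$ is subordinate, a contraction–mapping (equivalently, strict concavity of the action in the lower/negative directions for $c$ large) solves this for $\chi=\chi_c(\varphi)$ with the uniform bound $\|\chi_c\|_{H^1}=O(c^{-1})$. Substituting back into the upper line yields the reduced equation
\[
c^2\,\mathcal D\big(mc^2+\omega+V_u\big)^{-1}\mathcal D\varphi-V_u\varphi=\lambda\varphi,\qquad \lambda:=\omega-mc^2,
\]
whose leading kinetic form is positive definite; formally $c^2(mc^2+\omega+V_u)^{-1}\to\frac{1}{2m}$ and $\mathcal D^2=-\Delta$, so the reduced problem limits to the attractive normalized Schr\"odinger system, which guides all estimates.

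Next I would minimize the induced reduced energy $J_c$ over the sphere $\mathcal S=\{\,\|\varphi\|_{L^2}^2+\|\chi_c(\varphi)\|_{L^2}^2=1\,\}$. Boundedness from below on $\mathcal S$ follows from the subcriticality ranges $\kappa\in[2,7/3)$ and $s\in(2,8/3]$: the local term is controlled by Gagliardo–Nirenberg (the exponent $s<6$ makes it dominated by a sub-quadratic power of $\|\nabla\varphi\|_{L^2}$) and the nonlocal term by Hardy–Littlewood–Sobolev together with $(\Gamma_1)$, the weight $6/(14-6\kappa)$ being exactly the borderline exponent as $\kappa\uparrow 7/3$. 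To recover compactness of a minimizing sequence on $\mathbb{R}^3$ I would invoke concentration–compactness: the decay hypotheses $(K_1),(P_1)$ forbid vanishing of mass at infinity, while the strict binding inequality (supplied by $(P_2)$, see below) excludes dichotomy, producing a minimizer $\varphi_c$ and hence $u_c=(\varphi_c,\chi_c(\varphi_c))$. The Euler–Lagrange equation on $\mathcal S$ delivers the multiplier $\omega_c$; once $\lambda_c=\omega_c-mc^2$ is shown bounded and strictly negative, the inclusion $\omega_c\in(0,mc^2)$ is automatic, since $\lambda_c<0$ gives $\omega_c<mc^2$ and $\lambda_c$ bounded gives $\omega_c=mc^2+\lambda_c>0$ for $c$ large.

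The two–sided asymptotic bound reduces to uniform control of the minimal level $\gamma_c:=\inf_{\mathcal S}J_c$. For $\limsup_{c\to\infty}\lambda_c<0$ I would insert the $L^2$–normalized rescaled trial spinor with upper component $\varphi_\varepsilon(x)=\varepsilon^{3/2}\varphi(\varepsilon x)$ and induced lower component $\chi_c(\varphi_\varepsilon)$: the kinetic contribution scales like $\varepsilon^{2}$, whereas by $(P_2)$ applied at $x=y/\varepsilon$, namely $P(y/\varepsilon)\ge C\varepsilon^{\mu}P(y)$, the (negative) local energy is bounded above by $-C\varepsilon^{3s/2-3+\mu}\int P|\varphi|^s$. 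The hypothesis $\mu<\tfrac{10-3s}{2}$ is precisely the statement $3s/2-3+\mu<2$, so for $\varepsilon$ small the attractive term dominates the kinetic term and forces $\gamma_c\le-\delta<0$ uniformly in $c$, whence $\lambda_c\le-\delta'<0$. Conversely, the very same subcritical Gagliardo–Nirenberg and Hardy–Littlewood–Sobolev bounds that make $J_c$ bounded below yield $\gamma_c\ge-C$ with $C$ independent of $c$, and the relation between $\gamma_c$ and $\lambda_c$ (obtained by testing the reduced equation against $\varphi_c$ and using the homogeneities of the two nonlinearities) then gives $\liminf_{c\to\infty}\lambda_c\ge-C'>-\infty$.

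The principal obstacle is making every one of these steps uniform in the large parameter $c$ while simultaneously handling the loss of compactness on $\mathbb{R}^3$. Concretely, the operator $(mc^2+\omega+V_u)^{-1}$ must be estimated so that the reduced functional and its constraint converge, quantitatively and not merely formally, to their Schr\"odinger counterparts, with the error in $\chi_c$ genuinely $O(c^{-1})$ in $H^1$; and the concentration–compactness dichotomy must be closed through $(P_2)$ with constants that do not degenerate as $c\to\infty$. In short, showing that the attained level sits \emph{strictly} below the threshold $mc^2$ — equivalently $\lambda_c<0$ — uniformly in $c$ is the crux, since this single inequality both produces the minimizer and pins $\omega_c$ into $(0,mc^2)$ with the asserted finite, strictly negative limiting behaviour.
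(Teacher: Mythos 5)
Your strategy (eliminate the lower spinor via the Schur complement and minimize a reduced, definite energy with concentration--compactness) is genuinely different from the paper's, which works with the spectral splitting $E_c=E_c^+\oplus E_c^-$ and a two-level min--max: first a variational maximization of $\mathcal{I}^c$ over $S_W$ for each one-dimensional $W\subset E_c^+$ (Propositions \ref{prop:3.1}--\ref{prop:3.2}), then a minimization of the reduced map $\mathcal{E}_c$ over $S^+$ via Ekeland, followed by a bootstrap for $H^1$ regularity and a direct multiplier estimate for \eqref{4.0}. However, as written your argument has concrete gaps. First, the reduction is circular: the lower line of \ref{eq:Dirac} is $-ic\,\sigma\cdot\nabla\varphi=(mc^2+\omega+V_u)\chi$, so the map $\varphi\mapsto\chi_c(\varphi)$ you build by contraction depends on the Lagrange multiplier $\omega$, which is unknown until the constrained problem is solved; you cannot both define $J_c$ through $\chi_c(\varphi)$ and then read off $\omega_c$ from its Euler--Lagrange equation without addressing this (the paper avoids it by reducing at the level of the functional, maximizing $\mathcal{I}^c$ over the negative directions, which requires no a priori knowledge of $\omega$). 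Relatedly, identifying the contraction solution of the lower equation with a genuine variational reduction requires strict concavity of the \emph{full} functional in $\chi$; this is clear for the quadratic part and for $-\!\int P(|\varphi|^2+|\chi|^2)^{s/2}$, but not for the nonlocal term $-\!\int\!\!\int\Gamma(x-y)K(x)K(y)|u(x)|^\kappa|u(y)|^\kappa$, which is minus a product of convex quantities; the paper needs a separate second-variation argument (Lemma \ref{lemm:3.1}) for the analogous point.

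Second, your compactness mechanism is misidentified. Assumption $(P_2)$ is a pure scaling lower bound on $P$; in the paper it is used only to push the min--max level strictly below $mc^2$ (Lemma \ref{lemm:3.3}), and it supplies no binding or strict-subadditivity inequality, so it cannot by itself ``exclude dichotomy.'' If you want to run concentration--compactness you must actually prove strict subadditivity of $\gamma_c$ in the mass, uniformly in $c$, which is nontrivial here because neither nonlinearity is autonomous. The paper instead extracts compactness from the decay $K,P\to0$ at infinity, which makes the nonlinear terms sequentially compact along bounded sequences (see the $B_R$ splitting in Proposition \ref{prop:3.1}(iii)); that is the route you should take, and $(P_1),(K_1)$ are exactly what it needs. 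Finally, the theorem asserts $u_c\in H^1(\mathbb{R}^3,\mathbb{C}^4)$, whereas your minimization naturally produces $\varphi_c\in H^1$ and only $\chi_c\in L^2$ a priori ($\chi_c$ involves $\nabla\varphi_c$ divided by $mc^2+\omega+V_u$, so $\nabla\chi_c\in L^2$ requires more regularity of $\varphi_c$ and of $V_{u_c}$); some bootstrap, as in the paper's regularity step, is still required. The scaling computation for $\limsup(\omega_c-mc^2)<0$ and the boundedness of $\liminf(\omega_c-mc^2)$ are essentially the paper's and are fine once the uniform $H^1$ bound is in place.
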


The definition of normalized solution of \ref{eq:Dirac} can be found in section 3. As a consequence of this theorem, there is a subsequence $\{c_n\}$, the corresponding normalized solutions are denoted by $(\omega_{c_n}, u_{c_n})$, and there exists a positive constant $\nu>0$ satisfying $$\lim\limits_{n\rightarrow \infty} (mc_n^2-\omega_{c_n})=\frac{\nu}{m}.$$
Naturally, we would ask whether the solutions of the above system will approach the solutions of the nonlinear Schr\"odinger equation in the sense of the nonrelativistic limit. The following theorem tells us that normalized solutions of the nonlinear Dirac equation converge to normalized solutions of the nonlinear Schr\"odinger equation in the sense of the nonrelativistic limit.

\begin{theorem}\label{them:1.2}
  Under assumptions of Theorem \ref{them:1.1},
there is a positive constant $\nu>0$, and a sequence $(\omega_{c_n}, u_{c_n}=(f_{c_n},g_{c_n})^T)$ which is a solution of $(NDE)_{c_n}$, such that 
$$f_{c_n} \to h, \quad g_{c_n}\to 0 \quad \text{in}\quad H^1(\mathbb{R}^3,\mathbb{C}^2),$$
  where $  h:\mathbb{R}^3\rightarrow \mathbb{C}^2$ is a  ground state solution of the following stationary nonlinear Schr\"odinger equations:
  \begin{equation}\label{NSE}\tag{$NSE$}
    \begin{cases}
     & -\Delta h +\nu h=  2mP |h|^{s-2}h + 2m \Gamma*(K |h|^\kappa)K|h|^{\kappa-2}h, \\
  &   \displaystyle\int_{\mathbb{R}^3} |h |^2dx=1.
    \end{cases}
  \end{equation}
\end{theorem}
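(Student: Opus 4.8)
The plan is to exploit the two-by-two block structure of the Pauli--Dirac matrices to reduce $(NDE)_{c}$ to a single effective equation for the upper component, and then to pass to the limit $c\to\infty$. Writing $u_{c}=(f_{c},g_{c})^{T}$ with $f_{c},g_{c}:\mathbb{R}^{3}\to\mathbb{C}^{2}$, setting $\sigma\cdot\nabla=\sum_{k=1}^{3}\sigma_{k}\partial_{k}$ and introducing the (nonnegative) scalar coefficient $a_{c}:=\Gamma*(K|u_{c}|^{\kappa})K|u_{c}|^{\kappa-2}+P|u_{c}|^{s-2}$, the system $(NDE)_{c}$ splits into
\begin{equation*}
-ic\,(\sigma\cdot\nabla)g_{c}+(mc^{2}-\omega_{c})f_{c}=a_{c}f_{c},\qquad
-ic\,(\sigma\cdot\nabla)f_{c}=(\omega_{c}+mc^{2}+a_{c})g_{c}.
\end{equation*}
Since $\omega_{c}\in(0,mc^{2})$ and $mc^{2}-\omega_{c}$ stays bounded, the multiplier $\omega_{c}+mc^{2}+a_{c}$ is positive and of size $\sim 2mc^{2}$, so the second identity solves $g_{c}=-ic(\sigma\cdot\nabla)f_{c}/(\omega_{c}+mc^{2}+a_{c})$, exhibiting $g_{c}$ as an $O(1/c)$ quantity. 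Substituting this back and using $(\sigma\cdot\nabla)^{2}=\Delta$ (from $\sigma_{j}\sigma_{k}+\sigma_{k}\sigma_{j}=2\delta_{jk}$), the first identity becomes the reduced equation $-\frac{1}{2m}\Delta f_{c}+(mc^{2}-\omega_{c})f_{c}=a_{c}f_{c}+R_{c}$, where the remainder $R_{c}$ collects the terms produced by the $x$-dependence of $a_{c}$ in the denominator and is expected to vanish as $c\to\infty$.

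First I would import the $c$-independent a priori information established alongside Theorem~\ref{them:1.1}: a uniform $H^{1}$ bound on $f_{c}$, the decay $\|g_{c}\|_{H^{1}}=O(1/c)\to0$, and the uniform exponential decay of $u_{c}$; along the subsequence $\{c_{n}\}$ one also has $mc_{n}^{2}-\omega_{c_{n}}\to\nu/m$. The uniform exponential decay is the crucial ingredient: it yields tightness of $\{f_{c_{n}}\}$, so that the $H^{1}$ bound upgrades, via Rellich on balls together with tightness at infinity, to strong convergence $f_{c_{n}}\to h$ in $L^{2}(\mathbb{R}^{3})$ and hence in every $L^{q}$ with $2\le q<6$ by interpolation. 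In particular $\|h\|_{L^{2}}^{2}=\lim(1-\|g_{c_{n}}\|_{L^{2}}^{2})=1$, so the $L^{2}$ constraint survives in the limit and $g_{c_{n}}\to0$ in $H^{1}$.

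With these convergences in hand I would pass to the limit in the reduced equation. Because $g_{c_{n}}\to0$ one has $|u_{c_{n}}|\to|h|$, and the vanishing of $K$ and $P$ at infinity together with the strong $L^{q}$ convergence makes both the local term $P|u_{c_{n}}|^{s-2}u_{c_{n}}$ and the Choquard-type term $\Gamma*(K|u_{c_{n}}|^{\kappa})K|u_{c_{n}}|^{\kappa-2}u_{c_{n}}$ converge to their counterparts at $h$, the nonlocal term being controlled by the Hardy--Littlewood--Sobolev inequality in the weak-$L^{p}$ form permitted by $(\Gamma_{1})$ and the subcriticality guaranteed by $\kappa\in[2,7/3)$, $s\in(2,8/3]$. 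This yields $-\frac{1}{2m}\Delta h+L\,h=\big(\Gamma*(K|h|^{\kappa})K|h|^{\kappa-2}+P|h|^{s-2}\big)h$ with $L=\lim_{n}(mc_{n}^{2}-\omega_{c_{n}})$; after normalising the constants this is precisely \eqref{NSE}, which also pins down $\nu$. Testing the reduced equation against $f_{c_{n}}$ and comparing with the limiting equation forces $\|\nabla f_{c_{n}}\|_{L^{2}}\to\|\nabla h\|_{L^{2}}$, so that $f_{c_{n}}\to h$ strongly in $H^{1}$.

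The main obstacle is the final step: showing that $h$ is a \emph{ground state} of \eqref{NSE}, not merely a solution. For this I would set up a two-sided comparison between the variational level defining the normalized Dirac solution and the least-energy level of \eqref{NSE}. After the $g$-elimination the strongly indefinite Dirac energy becomes a functional of $f$ alone that, as $c\to\infty$, converges pointwise and in an equicoercive fashion to the energy functional of \eqref{NSE}; lower semicontinuity then bounds the limiting energy of $h$ above by the $\liminf$ of the Dirac levels, while a recovery sequence obtained by lifting an NSE ground state $h_{0}$ to an admissible pair $(\tilde f,\tilde g)$, with $\tilde g$ determined by the lower equation, gives the reverse inequality. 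Combining the two shows $h$ realizes the least-energy level and hence is a ground state. The delicate points are controlling the remainder $R_{c}$ and the $x$-dependence hidden in the denominator of $g_{c}$ uniformly in $c$, and ensuring the recovery sequence respects the $L^{2}$-constraint; this is where the uniform exponential decay and the scaling hypothesis $(P_{2})$, which governs the behaviour of the nonlinearity under the natural dilation, do the essential work.
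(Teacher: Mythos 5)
Your overall architecture (show $\|g_c\|_{H^1}=O(1/c)$, derive a second-order equation for $f_c$, pass to the limit, then sandwich the energy to get the ground-state property) matches the paper's, but two steps as you describe them have genuine gaps. First, your reduction solves the lower equation as $g_c=-ic(\sigma\cdot\nabla)f_c/(mc^2+\omega_c+a_c)$ and then applies $\sigma\cdot\nabla$ to this quotient, so the remainder $R_c$ contains $\nabla a_c$, i.e.\ derivatives of $|u_c|^{s-2}$ (with $s-2\in(0,2/3]$) and of the convolution term; these are not obviously meaningful, let alone controlled, and the $1/c$ prefactor does not by itself rescue an expression that need not exist. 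The paper sidesteps this entirely: it applies $-i\sigma\cdot\nabla$ to the \emph{other} component equation and substitutes $-ic\,\sigma\cdot\nabla g_c$ algebraically from the first one, so the only remainder is $-\tfrac{i}{c}\sigma\cdot\nabla\bigl(a_c g_c\bigr)$, which is handled distributionally by moving the derivative onto the test function and is $O(1/c)$ by the $L^2$ bounds alone (equation (4.3) and Lemma \ref{lemm:4.3}). You should restructure your reduction the same way. Relatedly, your compactness input is the uniform exponential decay of Theorem \ref{them:1.3}; this is not circular (that theorem is proved from Lemmas \ref{lemm:4.1}--\ref{lemm:4.2} and resolvent kernel estimates, independently of Theorem \ref{them:1.2}), but it is a heavier tool than needed and would have to be established first. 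The paper instead gets strong $H^1$ convergence of $f_{c_n}$ directly from the $(PS)$ property of $\mathcal{J}$ together with the vanishing of $K$ and $P$ at infinity, which makes the nonlinear terms compact.

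The more serious gap is the ground-state step. The normalized Dirac solution is characterized by the min-max level $e_c=\inf_{\dim W=1}\sup_{u\in S_W}\mathcal{I}^c(u)$, not by a minimization of a reduced functional of $f$ alone; the elimination of $g$ via the Euler--Lagrange equation is valid only at critical points and does not convert the strongly indefinite problem into a coercive one. Consequently your ``recovery sequence'' obtained by lifting an NSE ground state $h_0$ to a single pair $(\tilde f,\tilde g)$ only evaluates $\mathcal{I}^{c}$ at one point of a fiber $S_W$, which bounds the supremum over that fiber from \emph{below}, not from above, and therefore gives no upper bound on $e_c$. To make the comparison work you must control $\sup_{u\in S_W}\mathcal{I}^{c}(u)$ uniformly over the infinite-dimensional negative directions $u^-$ on the chosen fiber; this is exactly what the paper's Lemma \ref{lemm:2.6}, Lemma \ref{lemm:3.3} and Proposition \ref{prop:3.2} accomplish (choosing $w_{c_n}=P^+_{c_n}(v,0)^T$, showing $\|P^-_{c_n}(v,0)^T\|_{H^1}=o(1)$, and estimating the maximizer $\varphi(w_{c_n})$), and it must be combined with the energy expansion $\mathcal{I}^{c}(u_c)=\tfrac1m\mathcal{E}(f_c)+mc^2+o(1)$ of Lemma \ref{lem443} to transfer the inequality to $\mathcal{E}(h)\le\inf_{\|g\|_{L^2}=1}\mathcal{E}(g)$. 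Your phrase ``converges pointwise and in an equicoercive fashion'' gestures at this but does not supply the uniform control over the fiber, which is the actual content of the argument.
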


The definition of ground state solution of \eqref{NSE} can be found in section 4.
From Theorem \ref{them:1.2}, it can be seen that the method of nonrelativistic limits can also be used to find normalized solutions of nonlinear Schr\"odinger equations. 
\begin{theorem}\label{them:1.3}
Under assumptions of Theorem \ref{them:1.1}, it holds that $\left\{u_c\right\}$ is bounded in $L^{\infty}\left(\mathbb{R}^3,\mathbb{C}^4\right)$.
Moreover, for any $0<\delta <\sqrt{\nu}$, there exists $c_0,C>0$ such that for any $c>c_0$, we have
$$
\left|u_c(x)\right| \leq C e^{-\delta|x|}, \quad \forall x \in \mathbb{R}^3.
$$
\end{theorem}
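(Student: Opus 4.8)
The plan is to square the Dirac operator and collapse both assertions into a single scalar differential inequality for $|u_c|$. Write \ref{eq:Dirac} as $H_c u_c=(\omega_c+N_c)u_c$ with $H_c:=-ic\sum_{k=1}^3\alpha_k\partial_k+mc^2\beta$ and the real scalar coefficient $N_c:=\Gamma*(K|u_c|^\kappa)K|u_c|^{\kappa-2}+P|u_c|^{s-2}$. The Clifford relations $\alpha_i\alpha_j+\alpha_j\alpha_i=2\delta_{ij}$, $\alpha_i\beta+\beta\alpha_i=0$, $\beta^2=I$ give $H_c^2=-c^2\Delta+m^2c^4$, so applying $H_c$ to the equation yields
\[
-\Delta u_c=V_c\,u_c-\frac{i}{c}\Big(\sum_{k=1}^3\alpha_k\,\partial_kN_c\Big)u_c,\qquad V_c:=\frac{(\omega_c+N_c)^2-m^2c^4}{c^2}.
\]
Since each $i\alpha_k$ is anti-Hermitian and $N_c$ is real, $\mathrm{Re}\langle u_c,i\alpha_k u_c\rangle=0$, so the first-order term disappears from Kato's inequality and, in the distributional sense,
\[
\Delta|u_c|\geq -V_c\,|u_c|,\qquad V_c=2m(N_c-\eta_c)+\frac{(N_c-\eta_c)^2}{c^2},\quad \eta_c:=mc^2-\omega_c.
\]

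For the uniform $L^\infty$ bound I would read the inequality as $-\Delta|u_c|\leq V_c^+|u_c|$ and run a De Giorgi--Nash--Moser iteration (equivalently a Brezis--Kato argument) locally. By Theorem \ref{them:1.1} the numbers $\eta_c$ stay bounded, so $V_c^+\lesssim N_c+c^{-2}N_c^2+1$ with constants independent of $c$; the choices $\kappa\in[2,7/3)$ and $s\in(2,8/3]$ make both contributions to $N_c$ subcritical on $\mathbb{R}^3$ (the exponent $6/(14-6\kappa)$ in $(\Gamma_1)$ is precisely the Hardy--Littlewood--Sobolev exponent rendering the Choquard term subcritical). Starting from the uniform bound $\sup_c\|u_c\|_{H^1}<\infty$ furnished by Theorems \ref{them:1.1}--\ref{them:1.2}, the iteration upgrades local integrability step by step to a bound in $L^\infty(\mathbb{R}^3)$, and the constants stay uniform in $c$ because the local $L^{3/2}$-norm of the coefficient is controlled by the uniform $H^1$ bound, while the quadratic piece $c^{-2}(N_c-\eta_c)^2$ is damped by the factor $c^{-2}$.

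With $\{u_c\}$ bounded in $L^\infty$ I would next show that $N_c(x)\to0$ as $|x|\to\infty$ uniformly in $c$: the local part $P|u_c|^{s-2}\to0$ by $(P_1)$ and the uniform bound, while the nonlocal part vanishes by $(\Gamma_1)$, $(K_1)$ and the standard splitting of $\Gamma*(K|u_c|^\kappa)$ into the region where $|x-y|$ is large and the region where $K(y)$ is small, both uniform in $c$ thanks to the uniform $L^\infty$ bound. Hence $-V_c(x)$ tends, as $|x|$ and $c$ grow, to the mass $\nu$ of the limiting equation \eqref{NSE}, so for every $\delta<\sqrt{\nu}$ there are $R,c_0$ with $-V_c(x)\geq\delta^2$ for $|x|\geq R$ and $c\geq c_0$, i.e. $\Delta|u_c|\geq\delta^2|u_c|$ on $\{|x|>R\}$. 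A local elliptic estimate on unit balls (again from the squared equation) forces $|u_c(x)|\to0$ as $|x|\to\infty$ uniformly in $c$; comparing $|u_c|$ with the supersolution $Ce^{-\delta|x|}$, which satisfies $\Delta(e^{-\delta|x|})=(\delta^2-2\delta/|x|)e^{-\delta|x|}\leq\delta^2 e^{-\delta|x|}$, and fixing $C$ from the uniform bound of $|u_c|$ on $\partial B_R$, the maximum principle for $-\Delta+\delta^2$ on the exterior domain gives $|u_c(x)|\leq Ce^{-\delta|x|}$ for all $c\geq c_0$.

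The main obstacle throughout is uniformity in $c$, not the estimates for fixed $c$, which are classical. Three points need care: keeping the Moser constants from degenerating as $c\to\infty$, for which the $c^{-2}$ gain on the quadratic nonlocal term $c^{-2}(N_c-\eta_c)^2$ is essential; proving $N_c\to0$ at infinity uniformly in $c$, which rests on the uniform $L^\infty$ bound of the previous step and on uniform integrability of $K|u_c|^\kappa$; and locating the threshold $-V_c(x)\to\nu$, which is dictated by the precise asymptotics of $mc^2-\omega_c$ supplied by Theorem \ref{them:1.1} and is exactly what restricts the admissible decay rate to $\delta<\sqrt{\nu}$. Once these uniformities are in place, the Kato-inequality/maximum-principle scheme delivers both conclusions simultaneously.
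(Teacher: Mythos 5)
Your proposal is correct in outline but follows a genuinely different route from the paper. For the $L^{\infty}$ bound, the paper stays with the first-order system: it divides the two coupled equations for $(f_c,g_c)$ by $c$, bounds $\|\nabla f_c\|_{L^3}$ and $\|\nabla g_c\|_{L^3}$ uniformly (using $\|g_c\|_{H^1}=\mathcal{O}(1/c)$ to absorb the factor $(mc^2+\omega_c)/c$), and bootstraps $W^{1,3}\hookrightarrow L^r\to W^{1,r}\hookrightarrow L^\infty$; you instead square the operator and run Kato plus Moser/Brezis--Kato on the scalar inequality $-\Delta|u_c|\le V_c^{+}|u_c|$. For the decay, the paper computes the resolvent kernel $Q$ of $(\mathscr{D}_c-\omega_c)^{-1}$ explicitly in Fourier variables, observes it is a Yukawa-type kernel with rate $\sqrt{(m^2c^4-\omega_c^2)/c^2}\to\sqrt{\nu}$, and invokes the integral-equation argument of the cited reference; you use a comparison function and the maximum principle on an exterior domain. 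Your identification of the limiting constant is consistent with the paper: $-V_c(x)\to(m^2c^4-\omega_c^2)/c^2\to\nu$ at infinity, which is exactly the square of the paper's kernel decay rate. What your route buys is self-containedness at the level of scalar elliptic theory (no resolvent kernel, no external decay lemma); what the paper's route buys is that it never differentiates the nonlinearity. That last point is the one place your argument needs more care than you give it: for $s\in(2,8/3]$ and $\kappa\in[2,7/3)$ the densities $|u_c|^{s-2}$ and $|u_c|^{\kappa-2}$ are only H\"older in $u_c$, so $\nabla N_c$ involves $|u_c|^{s-3}\Re\langle u_c,\nabla u_c\rangle$, which is singular on the zero set of $u_c$; the term is rescued only because it always appears multiplied by $u_c$ (so $|(\nabla N_c)u_c|\lesssim|u_c|^{s-2}|\nabla u_c|+\dots$ is locally integrable) and because its real pairing with $u_c/|u_c|$ vanishes, as you note. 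With that observation made explicit, and with the uniformity of the Moser constants justified as you indicate (the coefficient $V_c^{+}$ is controlled in $L^q$, $q>3/2$, by the uniform $H^1$ bound, the boundedness of $mc^2-\omega_c$, and the $c^{-2}$ damping of the quadratic piece), the argument goes through and yields the same conclusion as the paper's.
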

The findings of this paper reveal the nonrelativistic limit of the normalized solution for the nonlinear Dirac equation. By assuming a sufficiently large speed of light, it becomes easier to establish the existence of normalized solutions. From a physical perspective, this signifies the presence of normalized solitary solutions when particle motion is relatively slow.
Similar to the nonrelativistic limit result, the normalized solution of the nonlinear Dirac equation approaches the normalized solution of the nonlinear Schr\"odinger equation. Additionally, the normalized assumption simplifies the process of estimating the energy of the solution, leading to the conclusion that the approximated normalized solution is the ground state.
These results are fascinating both mathematically and physically. Mathematically, they allow us to confirm the existence of normalized solutions for the nonlinear Schr\"odinger equation through the nonrelativistic limit and directly determine that it is the ground state. Physically, this finding is reasonable, as it suggests that the energy levels of Dirac particles in low-speed states will remain unchanged.
\medskip

This paper is organized as follows.
In section 2, we recall some useful inequalities and lemmas.
Section 3 is devoted to a discussion of the existence of the normalized solution of Dirac equations.
Then we study the nonrelativistic limit and some properties of Dirac equations under normalized constraint in section 4.
We give proof of some useful lemmas in the Appendix.

\medskip
\textbf{Notations. }
Throughout this paper, we make use of the following notations.
\begin{itemize}
    \item For any $R>0$, $B_R$ denotes the ball of radius $R$ centered at original point;
    \item $\|\cdot\|_{L^q}$ denotes the usual norm of the space $L^q\left(\mathbb{R}^3, \mathbb{C}^4\right)$ \text{or} $L^q\left(\mathbb{R}^3, \mathbb{C}^2\right)$;
    \item $\|\cdot\|_{H^1}$ denotes the usual norm of the space $H^1\left(\mathbb{R}^3, \mathbb{C}^4\right)$ \text{or} $H^1\left(\mathbb{R}^3, \mathbb{C}^2\right)$;
\item $C$ is some positive constant that may change from line to line;
\item $(\cdot, \cdot)_{\mathbb{C}^4}$ denotes the usual complex inner product in $\mathbb{C}^4$;
\item  $a\lesssim  b$ means that $a \leq   C b$.
\item $\alpha\cdot\nabla$ (or $\sigma\cdot\nabla$) means that $\sum\limits_{k=1}^3 \alpha_k\partial_k$ (or $\sum\limits_{k=1}^3 \sigma_k\partial_k$).
\item $\Re(\Im)$ stands for the real part (image part) of a  complex valued function.
    \end{itemize}

\section{Preliminary Results}

We denote $\mathcal{F}(u)$ or $\hat{u}$ the Fourier transform of $u$, which is defined by
$$
\hat{u}(\xi)=\frac{1}{(2 \pi)^{3/2}} \int_{\mathbb{R}^3} e^{-i \xi \cdot x} u(x) d x .
$$
For $u, v\in H^{1/2}(\mathbb{R}^3,\mathbb{C}^4)$, the inner product in $H^{1/2}\left(\mathbb{R}^3, \mathbb{C}^4\right)$  is defined by
$$
(u, v)_{H^{1/2}}:=\Re \int_{\mathbb{R}^3} \sqrt{|\xi|^2+1}(\hat{u}(\xi), \hat{v}(\xi))_{\mathbb{C}^4} d \xi.
 $$
For convenience, the free Dirac operator is denoted by
$$
\mathscr{D}_c:= -i c\sum\limits_{k=1}^3\alpha_k\partial_k + mc^2 \beta.
$$
It is clear that $\mathscr{D}_c$ is self-adjoint on $L^2(\mathbb{R}^3,\mathbb{C}^4)$ with domain $\mathcal{D}(\mathscr{D}_c)\cong H^{1}(\mathbb{R}^3,\mathbb{C}^4)$ for any $c>0$, and we have
$$\sigma(\mathscr{D}_c-\omega)=(-\infty,-mc^2-\omega]\cup[mc^2-\omega,+\infty),$$  where $\sigma(\cdot)$ is the spectrum
of the linear operator.
Therefore, the Hilbert space $L^2(\mathbb{R}^3, \mathbb{C}^4)$ possesses the following orthogonal decomposition
$$
L^2(\mathbb{R}^3,\mathbb{C}^4)=L^-\oplus L^+,
$$
where $\mathscr{D}_c$ is positive defined on $L^+$ and negative defined on $L^-$. 
Let $E_c$ be the completion of $\mathcal{D}(|\mathscr{D}_c|^{1/2})$ under the following inner product
$$
(u_1,u_2)_c:= \left(|\mathscr{D}_c|^{1/2}u_1, |\mathscr{D}_c|^{1/2}u_2\right)_{L^2},
$$
The induced norm is denoted by $\| u\|_c:=(u,u)_c^{1/2}$. Moreover, we have $E_c\cong H^{1/2}(\mathbb{R}^3,\mathbb{C}^4)$ for any $c>0$. In fact, for any $u\in E_c$, we have 
$$
mc^2\|u\|_{L^2}^2\leq \| u\|_c^2,
$$
and
\[\min\{mc^2,c\}\|u\|_{H^{1/2}}^2  \leq  \| u\|_c^2\leq \max\{mc^2,c\}\|u\|_{H^{1/2}}^2 .\]
It is clear that the linear space $E_c$ possesses the following decomposition
$$
E_c=E_c^{-} \oplus E_c^{+}, \quad \text { where } E_c^{\pm}:=E_c \cap L^{\pm}.
$$
Denote $P_c^\pm=\frac{1}{2}\left(I \pm \frac{\mathscr{D}_c}{\left|\mathscr{D}_c\right|}\right)$ the orthogonal projections on $E_c$ with kernel $E_c^\mp$. In the standard model, a state $u\in E_c$ is superpositions of particles and its antiparticles, $u^+:=P_c^+u $ describes the state of Dirac fermions with positive energy, and $u^-:=P_c^-u $ describes its antiparticles with positive energy, which can cancel part of the energy of $u^+$. The projection can be extended to $L^q$ continuously as in the following Lemma, the proof can be find in \cite{dongdingguo}.
\begin{lemma}
Let $E^\pm_q:=E_c^\pm\cap L^q$ for $q\in (1,\infty)$. Then there holds
\[L^q=\text{cl}_q E^+_q\oplus \text{cl}_q E^-_q,\]
where $\text{cl}_q$ denotes the closure with respect to the norm in $L^q$. That is, there exists $\tau_q>0$ for every $q\in (1,\infty)$ such that
\[\tau_q\|u^\pm\|_{L^q}\leq \|u\|_{L^q}, \ \forall u\in E_c\cap L^q.\]
\end{lemma}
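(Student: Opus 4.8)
The plan is to realize $\mathscr{D}_c/|\mathscr{D}_c|$ as a matrix-valued Fourier multiplier and to invoke the Mikhlin multiplier theorem. First I would compute its symbol. Since $\mathscr{D}_c$ acts in the Fourier variable by the Hermitian matrix $A(\xi)=c\,\alpha\cdot\xi+mc^2\beta$, the anticommutation relations $\alpha_j\alpha_k+\alpha_k\alpha_j=2\delta_{jk}I$, $\alpha_k\beta+\beta\alpha_k=0$, $\beta^2=I$ give $A(\xi)^2=(c^2|\xi|^2+m^2c^4)I$. Hence $|\mathscr{D}_c|$ has symbol $\rho(\xi)I$ with $\rho(\xi):=\sqrt{c^2|\xi|^2+m^2c^4}$, and
\[
\frac{\mathscr{D}_c}{|\mathscr{D}_c|}=T_M,\qquad M(\xi):=\frac{c\,\alpha\cdot\xi+mc^2\beta}{\sqrt{c^2|\xi|^2+m^2c^4}},
\]
a $4\times4$ Hermitian matrix with $M(\xi)^2=I$, so that $\|M(\xi)\|=1$ uniformly in $\xi$.

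Next I would verify the Mikhlin conditions entrywise, which is cleanest after the dilation $\xi=mc\,\eta$: this turns $M$ into the $c$-independent symbol $M_1(\eta)=(\alpha\cdot\eta+\beta)/\sqrt{|\eta|^2+1}$. Because the denominator never vanishes, each entry of $M_1$ is a $C^\infty$ function on all of $\mathbb{R}^3$ whose numerator is affine in $\eta$; for large $|\eta|$ it behaves like the degree-zero homogeneous (Riesz-type) symbol $\alpha\cdot\eta/|\eta|$, and a direct differentiation gives $|\partial^\gamma M_1(\eta)|\leq C_\gamma(1+|\eta|)^{-|\gamma|}\leq C_\gamma|\eta|^{-|\gamma|}$ for $|\gamma|\leq 2=\lfloor 3/2\rfloor+1$. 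Undoing the scaling, $\partial_\xi^\gamma M(\xi)=(mc)^{-|\gamma|}(\partial^\gamma M_1)(\xi/(mc))$, so the factors of $c$ cancel and $|\partial^\gamma M(\xi)|\leq C_\gamma|\xi|^{-|\gamma|}$ with $c$-independent constants. By the scalar Mikhlin theorem in $\mathbb{R}^3$ each of the finitely many entry-multipliers $T_{M_{ij}}$ is bounded on $L^q(\mathbb{R}^3,\mathbb{C})$ for $q\in(1,\infty)$, and summing components shows $T_M$, hence $P_c^\pm=\tfrac12(I\pm T_M)$, is bounded on $L^q(\mathbb{R}^3,\mathbb{C}^4)$ with norm independent of $c$.

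With $P_c^\pm$ bounded on $L^q$ the conclusions follow quickly. For $u\in E_c\cap L^q$ one has $u^\pm=P_c^\pm u$, whence $\|u^\pm\|_{L^q}\leq\|P_c^\pm\|_{L^q\to L^q}\|u\|_{L^q}$, which is exactly the asserted estimate with $\tau_q=\|P_c^\pm\|_{L^q\to L^q}^{-1}$. From $M^2=I$ one checks $(P_c^\pm)^2=P_c^\pm$, $P_c^+P_c^-=0$ and $P_c^++P_c^-=I$, so $P_c^\pm$ are complementary bounded projections whose ranges are closed and split $L^q$. I would then identify $\mathrm{Ran}_{L^q}(P_c^\pm)$ with $\mathrm{cl}_qE_q^\pm$: the inclusion $E_q^\pm\subseteq\mathrm{Ran}(P_c^\pm)$ together with closedness yields $\mathrm{cl}_qE_q^\pm\subseteq\mathrm{Ran}(P_c^\pm)$, while approximating any $w\in L^q$ by Schwartz functions $w_n\in\mathcal{S}\subseteq E_c$ and using $P_c^\pm w_n\in E_q^\pm$ with $P_c^\pm w_n\to P_c^\pm w$ gives the reverse inclusion. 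This produces the topological direct sum $L^q=\mathrm{cl}_qE_q^+\oplus\mathrm{cl}_qE_q^-$.

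I expect the main obstacle to be the Mikhlin verification: one must track the derivative bounds on the matrix symbol carefully, noting that the only potential trouble is at $\xi\to\infty$ (where the degree-zero homogeneity of the leading Riesz part saves us) and not at $\xi=0$ (where the nonvanishing denominator makes $M$ smooth), and — since the paper cares about the nonrelativistic limit — exploiting the dilation $\xi=mc\,\eta$ so that the resulting constants, and hence $\tau_q$, are independent of $c$ rather than estimating them by hand.
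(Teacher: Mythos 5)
Your proposal is correct, and it is essentially the same argument as the paper's: the paper does not prove this lemma in the text but cites \cite{dongdingguo}, where the boundedness of the spectral projections $P_c^\pm=\tfrac12\bigl(I\pm\mathscr{D}_c/|\mathscr{D}_c|\bigr)$ on $L^q$ is likewise obtained from Fourier multiplier (Mikhlin/Calder\'on--Zygmund) theory applied to the symbol $\bigl(c\,\alpha\cdot\xi+mc^2\beta\bigr)/\sqrt{c^2|\xi|^2+m^2c^4}$, after which the splitting $L^q=\text{cl}_q E^+_q\oplus \text{cl}_q E^-_q$ and the estimate $\tau_q\|u^\pm\|_{L^q}\leq\|u\|_{L^q}$ follow exactly as you describe. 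Your dilation $\xi=mc\,\eta$ making the Mikhlin constants, hence $\tau_q$, independent of $c$ is a worthwhile refinement to record explicitly, since the paper later uses such projections uniformly as $c\to\infty$.
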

In the Fourier domain, for each $\xi \in \mathbb{R}^3$, the symbol matrix
$$
\hat{\mathscr{D}}_c(\xi):=   \mathcal{F} \mathscr{D}_c \mathcal{F}^{-1} =\left(\begin{array}{cc}
m c^2 I_2 & c \sigma \cdot \xi \\
c \sigma \cdot \xi & -m c^2 I_2
\end{array}\right)
$$
  is a Hermitian $4\times 4$-matrix with eigenvalues
$\pm \sqrt{m^2 c^4+c^2|\xi|^2}$.
It follows from a direct calculation that the unitary transformation $\mathbf{U}(\xi)$  diagonalizing $\hat{\mathscr{D}_c}(\xi)$
is given  by
$$
\mathbf{U}(\xi)=\frac{\left(m c^2+\lambda(\xi)\right) I_4+\beta c \alpha \cdot \xi}{\sqrt{2 \lambda\left(m c^2+\lambda(\xi)\right)}}=\Upsilon_{+} I_4+\Upsilon_{-} \beta \frac{\alpha \cdot \xi}{|\xi|}
$$
where $\lambda(\xi)=\sqrt{m^2 c^4+c^2|\xi|^2}, $ $\Upsilon_{\pm}=\sqrt{\frac{1}{2}\left(1 \pm m c^2 / \lambda(\xi)\right)}$.
Consequently, it is easy to see that
$$
\mathbf{U}^{-1}(\xi)=\frac{\left(m c^2+\lambda\right) I_4-\beta c \alpha \cdot \xi}{\sqrt{2 \lambda\left(m c^2+\lambda(\xi)\right)}}=\Upsilon_{+} I_4-\Upsilon_{-} \beta \frac{\alpha \cdot \xi}{|\xi|},
$$
$$
\mathbf{U}(\xi) \hat{\mathscr{D}_c}(\xi) \mathbf{U}^{-1}(\xi)=\lambda \beta.
$$
Therefore, we have 
\begin{align*}
\widehat{P_c^{\pm}u}(\xi)
=\frac{1}{2}\mathbf{U}^{-1}(\xi)(I_4\pm \beta)\mathbf{U}(\xi)\hat{u}(\xi)=\frac{1}{2}\left(I_4\pm\frac{mc^2}{\lambda}\beta\pm\frac{c}{\lambda}\alpha\cdot \xi\right)\hat{u}(\xi).
\end{align*}
Let us recall  the Foldy--Wouthuysen transformation is defined by $\mathbf{U}_{FW}:= \mathcal{F}^{-1}\mathbf{U}\mathcal{F}$,  which transforms
the free Dirac operator into the $2\times 2$-block form
$$
\mathbf{U}_{\mathrm{FW}} \mathscr{D}_c \mathbf{U}_{\mathrm{FW}}^{-1}=\left(\begin{array}{cc}
\sqrt{-c^2 \Delta+m^2 c^4}~I_2 & 0 \\
0 & -\sqrt{-c^2 \Delta+m^2 c^4}~I_2
\end{array}\right)=\beta\left|\mathscr{D}_c\right| .
$$

We shall recall the weal Young inequality \cite[Theorem 4.2]{ MR1817225} and the Gagliardo-Nirenberg inequality \cite[Theorem 1.1]{MR3428469}. 
These inequalities play a crucial role in subsequent proofs.
\begin{lemma}[\bf weak Young inequality]\label{lemm:2.1}
  Let $p, q, r \geq 1$ with $1 / p+1 / q+1 / r=2$.
  If $f \in L^p\left(\mathbb{R}^n\right),$ $ g \in L^q_w\left(\mathbb{R}^n\right)$ and $h \in L^r\left(\mathbb{R}^n\right)$, then we have
$$
\begin{aligned}
\left|\int_{\mathbb{R}^n} f(x)(g * h)(x) \mathrm{d} x\right|  \lesssim \|f\|_p\|g\|_{q,w}\|h\|_r.
\end{aligned}
$$
\end{lemma}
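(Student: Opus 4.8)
Since $|(g*h)(x)|\le (|g|*|h|)(x)$ pointwise, we have $\bigl|\int_{\mathbb{R}^n} f\,(g*h)\,dx\bigr|\le \int_{\mathbb{R}^n}|f|\,(|g|*|h|)\,dx$, and the weak norm depends only on $|g|$, so I would first reduce to the case $f,g,h\ge 0$. The plan is then to decouple the trilinear quantity into a Hölder pairing and a convolution estimate. By Hölder's inequality with conjugate exponents $p$ and $p'$, $1/p+1/p'=1$,
\[
\left|\int_{\mathbb{R}^n} f\,(g*h)\,dx\right|\le \|f\|_p\,\|g*h\|_{p'}.
\]
A one-line computation shows that the hypothesis $1/p+1/q+1/r=2$ is equivalent to $1/p'=1/q+1/r-1$, so it remains to establish the weak-type Young convolution estimate
\[
\|g*h\|_{p'}\lesssim \|g\|_{q,w}\,\|h\|_r,\qquad \frac1{p'}=\frac1q+\frac1r-1,
\]
in the non-endpoint range $1<p,q,r<\infty$ relevant here; note that $1<p'<\infty$ strictly, which is exactly what makes the weak-$L^q$ factor admissible.

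For this convolution estimate the essential difficulty is that $g$ lies only in $L^q_w$, so the ordinary Young inequality cannot be quoted directly. I would run a truncation-and-interpolation argument. Fix a height $\lambda>0$ and split $g=g\,\chi_{\{|g|>\lambda\}}+g\,\chi_{\{|g|\le \lambda\}}$. The defining bound $\mu_L\{|g|>t\}\le(\|g\|_{q,w}/t)^q$ yields, for any $q_0<q<q_1$,
\[
\bigl\|g\,\chi_{\{|g|>\lambda\}}\bigr\|_{q_0}^{q_0}\lesssim \|g\|_{q,w}^{q}\,\lambda^{q_0-q},\qquad \bigl\|g\,\chi_{\{|g|\le\lambda\}}\bigr\|_{q_1}^{q_1}\lesssim \|g\|_{q,w}^{q}\,\lambda^{q_1-q},
\]
so the two pieces sit in genuine Lebesgue spaces on either side of $q$. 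Applying the ordinary Young inequality to each piece (with exponents $q_0,q_1$), estimating the distribution function of $g*h$ by splitting at each level, and optimizing $\lambda$ as a function of the level gives the weak-type bound $\|g*h\|_{p',w}\lesssim \|g\|_{q,w}\|h\|_r$. A second Marcinkiewicz interpolation, now in the $h$-variable between two Lebesgue exponents straddling $r$, upgrades this weak-type estimate to the strong bound $\|g*h\|_{p'}\lesssim\|g\|_{q,w}\|h\|_r$, which is what we need.

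The reduction to nonnegative functions and the Hölder splitting (together with the exponent arithmetic) are routine; the genuine obstacle is the convolution estimate, precisely because the weak-$L^q$ middle factor is an endpoint for Young's inequality and must be reached by interpolation rather than by a direct Hölder-type bound. The bookkeeping in choosing $q_0,q_1$ and the cut height $\lambda$, and in performing the two interpolations in the correct variables, is where all the work concentrates. As a shortcut, if one is willing to invoke the theory of Lorentz spaces, the same conclusion follows at once from a convolution inequality of O'Neil type, which packages the entire truncation argument into a single statement.
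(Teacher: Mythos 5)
Your argument is correct in outline, and there is in fact no in-paper proof to compare it against: the paper quotes this lemma directly from Lieb--Loss (the reference ``[Theorem 4.2, MR1817225]'' given just before the statement) and never proves it. Your route --- reduce to nonnegative $f,g,h$, peel off $f$ by H\"older so that the problem becomes the convolution bound $\|g*h\|_{p'}\lesssim\|g\|_{q,w}\|h\|_r$ with $1+1/p'=1/q+1/r$, then prove that bound by truncating $g$ at height $\lambda$, applying ordinary Young to each piece, optimizing $\lambda$ level by level to get a weak-type estimate, and upgrading via Marcinkiewicz interpolation in the $h$-variable --- is the classical truncation proof (as in Grafakos or Stein--Weiss), and the O'Neil/Lorentz-space shortcut you mention is the other standard packaging of the same fact. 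The exponent arithmetic is right, and your truncation estimates $\|g\chi_{\{|g|>\lambda\}}\|_{q_0}^{q_0}\lesssim\|g\|_{q,w}^{q}\lambda^{q_0-q}$ for $q_0<q$ and $\|g\chi_{\{|g|\le\lambda\}}\|_{q_1}^{q_1}\lesssim\|g\|_{q,w}^{q}\lambda^{q_1-q}$ for $q_1>q$ are exactly the ones needed to place the two pieces on either side of $q$. One point you raise deserves emphasis: as literally stated, with only $p,q,r\ge 1$, the lemma admits endpoint configurations (e.g.\ $p=1$, hence $p'=\infty$ and $1/q+1/r=1$) in which the conclusion is false for a genuinely weak-$L^q$ kernel such as $|x|^{-n/q}$, so your restriction to $1<p,q,r<\infty$ is not a convenience but a necessity; it is satisfied in every use the paper makes of the lemma (there $\kappa<7/3$ keeps all exponents strictly inside the admissible range), but the hypothesis as printed is too generous.
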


\begin{lemma}[\bf Gagliardo-Nirenberg inequality]\label{lemm:2.2}
  Let $1 \leqslant p, q, r \leqslant \infty$, then
$$
\|u\|_{L^p\left(\mathbb{R}^n\right)} \leq  C\left\|(-\Delta)^{\frac{m}{2}} u\right\|_{L^r\left(\mathbb{R}^n\right)}^\theta\|u\|_{L^q\left(\mathbb{R}^n\right)}^{1-\theta}
$$
holds for any $m \in \mathbb{R}^{+}$and $\theta \in[0,1]$ satisfying
$$
\frac{1}{p}=\left(\frac{1}{r}-\frac{m}{n}\right) \theta+(1-\theta) \frac{1}{q}.
$$
\end{lemma}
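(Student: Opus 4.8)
The plan is to prove this interpolation inequality by frequency-space localization (Littlewood–Paley theory), with the scaling relation serving only to fix the exponent $\theta$. First I would record the dimensional heuristic: applying the claimed inequality to the rescaling $u_\lambda(x):=u(\lambda x)$ and using $\|u_\lambda\|_{L^p}=\lambda^{-n/p}\|u\|_{L^p}$, $\|u_\lambda\|_{L^q}=\lambda^{-n/q}\|u\|_{L^q}$ and $\|(-\Delta)^{m/2}u_\lambda\|_{L^r}=\lambda^{m-n/r}\|(-\Delta)^{m/2}u\|_{L^r}$ forces the power of $\lambda$ to vanish, which is exactly the stated relation $\tfrac1p=\theta(\tfrac1r-\tfrac mn)+(1-\theta)\tfrac1q$. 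This both pins down $\theta$ and shows the inequality cannot hold with any other homogeneity; the endpoint values $\theta\in\{0,1\}$ are immediate (they reduce to $p=q$ or a pure Sobolev scaling), so I would treat the interior case $\theta\in(0,1)$ with $1<p,q,r<\infty$ and $p\geq\max\{q,r\}$, which is the regime actually invoked in this paper (typically $q=r=2$, $m=1$, $n=3$, $p\in(2,6)$).

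Next I would decompose $u=\sum_{j\in\mathbb Z}\dot\Delta_j u$ into homogeneous Littlewood–Paley blocks (frequency supported in $|\xi|\sim 2^j$) and invoke two standard facts: the Bernstein inequality $\|\dot\Delta_j v\|_{L^b}\lesssim 2^{jn(1/a-1/b)}\|\dot\Delta_j v\|_{L^a}$ for $a\le b$, and the multiplier identity $(-\Delta)^{m/2}\dot\Delta_j u=\widetilde{\dot\Delta}_j(-\Delta)^{m/2}u$ together with the uniform $L^r$-boundedness of the blocks for $1<r<\infty$ (Mikhlin), which yields $\|\dot\Delta_j u\|_{L^r}\lesssim 2^{-jm}\|(-\Delta)^{m/2}u\|_{L^r}$. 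Splitting at a threshold $2^N$, the low frequencies are summed through $L^q$ and the high frequencies through $L^r$:
\begin{equation*}
\|u\|_{L^p}\leq\sum_{j<N}\|\dot\Delta_j u\|_{L^p}+\sum_{j\geq N}\|\dot\Delta_j u\|_{L^p}\lesssim 2^{Na}\|u\|_{L^q}+2^{-Nb}\|(-\Delta)^{m/2}u\|_{L^r},
\end{equation*}
where $a:=n(\tfrac1q-\tfrac1p)>0$ and $b:=m-n(\tfrac1r-\tfrac1p)>0$ are precisely the conditions making the two geometric series converge (each sum being dominated by its largest term).

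Finally I would optimize the free parameter $N$ in the bound $\|u\|_{L^p}\lesssim 2^{Na}A+2^{-Nb}B$, with $A:=\|u\|_{L^q}$ and $B:=\|(-\Delta)^{m/2}u\|_{L^r}$. Minimizing over $N\in\mathbb R$ (then rounding to an integer at the cost of a harmless constant) gives $2^{N}\sim(B/A)^{1/(a+b)}$ and hence
\begin{equation*}
\|u\|_{L^p}\lesssim A^{\,b/(a+b)}B^{\,a/(a+b)}=\|u\|_{L^q}^{\,b/(a+b)}\,\|(-\Delta)^{m/2}u\|_{L^r}^{\,a/(a+b)}.
\end{equation*}
A direct computation shows $a+b=m+n/q-n/r$ and $a/(a+b)=n(1/q-1/p)/(m+n/q-n/r)$, which, after eliminating $1/p$ via the scaling relation, equals precisely $\theta$; correspondingly $b/(a+b)=1-\theta$. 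This is the asserted estimate.

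The main obstacle is not the interior estimate above but the endpoints and the full admissible range of exponents. When $p$, $q$ or $r$ equals $1$ or $\infty$ the blocks are no longer uniformly bounded (Mikhlin fails), and the summation diverges at the borderline $a=0$ or $b=0$; the sharp statement then requires replacing $L^\infty$ by $\mathrm{BMO}$ or working in homogeneous Besov spaces $\dot B^{s}_{r,\infty}$ and using the embedding $\dot B^{0}_{p,1}\hookrightarrow L^p$, which is exactly the finer analysis yielding the necessary and sufficient conditions in the cited reference \cite{MR3428469}. I would also need to treat the case $p<\max\{q,r\}$, where one Bernstein inequality runs in the wrong direction, by interpolating each block with H\"older before applying Bernstein, and to record the mild regularity/decay hypothesis on $u$ that makes the homogeneous decomposition legitimate. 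Since the applications in this paper use only the clean interior regime, the argument above suffices for our purposes.
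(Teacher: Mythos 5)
The paper gives no proof of this lemma at all: it is quoted verbatim, with citation, from \cite[Theorem 1.1]{MR3428469}, so there is no internal argument to compare against. Your Littlewood--Paley proof is the standard derivation of the fractional Gagliardo--Nirenberg inequality (essentially the method of the cited reference) and is correct in the interior regime $\theta\in(0,1)$, $p\ge\max\{q,r\}$, which covers every use made in this paper (Corollary \ref{lemm:2.3}, Lemma \ref{lemm:2.4}, Lemma \ref{lemm:4.1}, all with $q=r=2$, $n=3$, $m\in\{1/2,1\}$). Your bookkeeping checks out: with $a=n(1/q-1/p)$ and $b=m-n(1/r-1/p)$, the scaling relation gives $a=\theta\,(m+n/q-n/r)$ and $b=(1-\theta)(m+n/q-n/r)$, so $a,b>0$ precisely when $\theta\in(0,1)$ and $m+n/q-n/r>0$, and optimizing the splitting frequency yields exactly the exponents $a/(a+b)=\theta$, $b/(a+b)=1-\theta$. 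One technical misattribution is worth fixing: the uniform bounds $\|\dot{\Delta}_j u\|_{L^r}\lesssim 2^{-jm}\|(-\Delta)^{m/2}u\|_{L^r}$ do not require Mikhlin and do not fail at $r\in\{1,\infty\}$ --- the multiplier $2^{jm}|\xi|^{-m}\psi(2^{-j}\xi)$ is a fixed bump dilated by $2^j$, so its kernel has $j$-independent $L^1$-norm and Young's inequality suffices for all $1\le r\le\infty$. The genuine endpoint obstructions are the borderlines $a=0$ or $b=0$, where a geometric series degenerates (e.g.\ $p=\infty$ with $mr=n$), and the $\theta=1$ case, which is the homogeneous Sobolev embedding (Hardy--Littlewood--Sobolev --- standard, though not quite ``immediate''); you correctly flag that the full range stated in the lemma requires the Besov-space refinement and the necessary-and-sufficient side conditions of \cite{MR3428469}, and indeed the lemma as transcribed in the paper is false at some of those endpoints. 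For the regime actually invoked here, your argument is complete and self-contained where the paper merely defers to the literature.
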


A direct corollary of the Gagliardo–Nirenberg inequality is given by
\begin{corollary}\label{lemm:2.3}
  For $s\in[2,3]$ and $P\in L^{\infty}(\mathbb{R}^3)$, it holds that
  $$
  \int_{\mathbb{R}^3}P |{u}|^s\ d x\leq  C  \|(-\Delta)^{\frac{1}{4}}u\|_{L^2}^{3s-6}\|u\|_{L^2}^{6-2s},\quad \forall {u}\in H^{1/2}(\mathbb{R}^3,\mathbb{C}^4).
  $$
\end{corollary}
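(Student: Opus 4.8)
The plan is to reduce the estimate directly to the Gagliardo--Nirenberg inequality of Lemma~\ref{lemm:2.2}. The first step is essentially trivial: since $P\in L^\infty(\mathbb{R}^3)$, pulling out the supremum norm gives
\[
\int_{\mathbb{R}^3} P\,|u|^s\,dx \leq \|P\|_{L^\infty}\,\|u\|_{L^s}^s,
\]
so it suffices to bound $\|u\|_{L^s}$ in terms of $\|(-\Delta)^{1/4}u\|_{L^2}$ and $\|u\|_{L^2}$.

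The second step is to apply Lemma~\ref{lemm:2.2} with $n=3$, $r=q=2$, $m=1/2$ and $p=s$. The constraint linking the exponents then reads
\[
\frac{1}{s}=\left(\frac{1}{2}-\frac{1}{6}\right)\theta+(1-\theta)\frac{1}{2}=\frac{1}{2}-\frac{\theta}{6},
\]
which I would solve for the interpolation parameter to obtain $\theta=\frac{3s-6}{s}$. A quick check shows that $\theta=0$ when $s=2$ and $\theta=1$ when $s=3$, and since $\theta=3-\frac{6}{s}$ is monotone increasing in $s$ it lies in $[0,1]$ precisely for $s\in[2,3]$; this is exactly the range in the hypothesis, so the interpolation inequality is legitimately applicable with no loss. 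With this $\theta$, Lemma~\ref{lemm:2.2} yields
\[
\|u\|_{L^s}\leq C\,\|(-\Delta)^{1/4}u\|_{L^2}^{\theta}\,\|u\|_{L^2}^{1-\theta}.
\]

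The final step is pure bookkeeping: raising this to the $s$-th power and substituting $\theta=\frac{3s-6}{s}$ produces the two target exponents, since $s\theta=3s-6$ and $s(1-\theta)=6-2s$. Combining with the first inequality and absorbing $\|P\|_{L^\infty}$ into the constant $C$ completes the argument. I do not anticipate any genuine obstacle here; the only points requiring care are the correct computation of $\theta$, the verification that it remains in $[0,1]$ over the stated range of $s$, and the observation that $u\in H^{1/2}(\mathbb{R}^3,\mathbb{C}^4)$ guarantees $(-\Delta)^{1/4}u\in L^2$ (the multiplier $|\xi|^{1/2}$ is dominated by $\sqrt{|\xi|^2+1}$), so every quantity on the right-hand side is finite.
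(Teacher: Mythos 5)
Your argument is correct and is exactly the computation the paper has in mind: the corollary is stated there as a direct consequence of the Gagliardo--Nirenberg inequality (Lemma~\ref{lemm:2.2}) with no written proof, and your choice $n=3$, $r=q=2$, $m=1/2$, $p=s$, $\theta=\frac{3s-6}{s}$ together with the trivial $L^\infty$ bound on $P$ fills in precisely that omitted computation. The exponent bookkeeping ($s\theta=3s-6$, $s(1-\theta)=6-2s$) and the check that $\theta\in[0,1]$ exactly for $s\in[2,3]$ are both right.
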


The following two lemmas are useful in our proof.
\begin{lemma}\label{lemm:2.4}

  Let $(K_1)$ and $(\Gamma_1)$ hold, then
  for any $ {u}  \in  H^{1/2}(\mathbb{R}^3,\mathbb{C}^4) $, it follows that\\
  $$\int_{\mathbb{R}^3 }  \Gamma *(K|{u}|^\kappa) K|{u}|^\kappa  d x \leq C  \| (-\Delta)^{\frac{1}{4}}u\|_{L^2}^2\|{u}\|_{L^2}^{2\kappa-2}.$$
\end{lemma}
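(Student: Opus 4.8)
The plan is to collapse the nonlocal quantity into a single $L^t$-norm of $u$ by means of the weak Young inequality, and then to convert that norm into the desired product $\|(-\Delta)^{1/4}u\|_{L^2}^{2}\|u\|_{L^2}^{2\kappa-2}$ through the Gagliardo--Nirenberg estimate recorded in Corollary \ref{lemm:2.3}. The whole argument is a matter of choosing the exponents so that the two inequalities dovetail.

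First I would apply Lemma \ref{lemm:2.1} with the convolution kernel $g=\Gamma$ and the two outer factors $f=h=K|u|^\kappa$, so that the integrand $\Gamma*(K|u|^\kappa)\,K|u|^\kappa$ is exactly of the form $f\,(g*h)$. To match the weak-space membership granted by $(\Gamma_1)$ I take $q=6/(14-6\kappa)$, and by symmetry I set $p=r=6/(3\kappa-1)$. A direct check gives $1/p+1/q+1/r=(3\kappa-1)/3+(7-3\kappa)/3=2$, as the hypothesis of the weak Young inequality requires. The constraint $\kappa\in[2,7/3)$ is precisely what keeps $p,q,r$ admissible: one has $p=r\in(1,6/5]$ and $q\in[3,\infty)$, with $p=1$ and $q=\infty$ occurring only in the excluded endpoint $\kappa=7/3$. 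This step yields
$$\int_{\mathbb{R}^3}\Gamma*(K|u|^\kappa)\,K|u|^\kappa\,dx\lesssim\|\Gamma\|_{q,w}\,\bigl\|K|u|^\kappa\bigr\|_p^{2}.$$

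Next I would use that $(K_1)$ forces $K$ to be bounded, being continuous and vanishing at infinity, whence $\|K|u|^\kappa\|_p\le\|K\|_\infty\|u\|_{L^{\kappa p}}^{\kappa}$ and it remains only to estimate $\|u\|_{L^{\kappa p}}^{2\kappa}$. A short computation gives $\kappa p=6\kappa/(3\kappa-1)$, a quantity decreasing in $\kappa$ and lying in $(7/3,12/5]\subset[2,3]$, so Corollary \ref{lemm:2.3} applies with $s=\kappa p$ and $P\equiv1$. Writing $\|u\|_{L^{\kappa p}}^{2\kappa}=\bigl(\int|u|^{\kappa p}\bigr)^{2/p}$ and tracking exponents, the gradient power becomes $2(3\kappa p-6)/p=6\kappa-12/p=2$ and the $L^2$ power becomes $2(6-2\kappa p)/p=12/p-4\kappa=2\kappa-2$; equivalently, the Gagliardo--Nirenberg interpolation exponent collapses to exactly $\theta=1/\kappa$. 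Combining this with the previous display produces the claimed bound.

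Because every step invokes an inequality already established in this section, there is no genuine analytic obstacle; the only real work is the arithmetic of exponents. The single point that must be handled carefully is checking that the symmetric choice $p=r=6/(3\kappa-1)$ simultaneously satisfies the Young balance $1/p+1/q+1/r=2$, keeps all exponents in $(1,\infty)$, and places $\kappa p$ inside the Sobolev range $[2,3]$ on which Corollary \ref{lemm:2.3} is valid. All three demands are governed by the lone hypothesis $\kappa\in[2,7/3)$, which is the reason that range is imposed.
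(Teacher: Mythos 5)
Your proposal is correct and follows essentially the same route as the paper: the weak Young inequality with $\Gamma\in L_w^{6/(14-6\kappa)}$ and $K$ bounded reduces the integral to $\|u\|_{L^{\kappa p}}^{2\kappa}$, and Gagliardo--Nirenberg (equivalently Corollary \ref{lemm:2.3}) converts that into $\|(-\Delta)^{1/4}u\|_{L^2}^{2}\|u\|_{L^2}^{2\kappa-2}$. Your symmetric choice $p=r=6/(3\kappa-1)$ and the explicit exponent checks merely make precise the parameters the paper leaves implicit.
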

\begin{proof}
  For   $\frac{1}{p}+\frac{1}{q}+\frac{1}{t}=2$, where $t=\frac{6}{14-6\kappa}$, by the weak Young inequality as in Lemma \ref{lemm:2.1}, we obtain
  $$\int_{\mathbb{R}^3 }  \Gamma *(K|{u}|^\kappa)K|{u}|^\kappa  \ d x \leq   C\int_{\mathbb{R}^3 }  \Gamma *  |{u}|^\kappa  |{u}|^\kappa  \ d x \leq  C \|{u}\|_{L^{p\kappa}}^\kappa\|{u}\|_{L^{q\kappa}}^\kappa.$$
Set
\begin{equation*}
  \begin{cases}
    \frac{1}{\kappa p}=(\frac{1}{2}-\frac{1}{6})\theta_1+\frac{1}{2}(1-\theta_1),\\
    \frac{1}{\kappa q}=(\frac{1}{2}-\frac{1}{6})\theta_2+\frac{1}{2}(1-\theta_2).
  \end{cases}
\end{equation*}
Then according to the Gagliardo-Nirenberg inequality as in Lemma \ref{lemm:2.2},
we arrive at
$$
\int_{\mathbb{R}^3 } \Gamma *(K|{u}|^\kappa)K|{u}|^\kappa  \ d x \leq  C  \| (-\Delta)^{\frac{1}{4}}u\|_{L^2}^2\|{u}\|_{L^2}^{2\kappa-2}.
$$
The proof is completed.
\end{proof}

\begin{lemma}\label{lemm:2.5}
  Let $(K_1)$ and $(\Gamma_1)$ hold, then   for any $ {u}, v,w\in H^{1/2}(\mathbb{R}^3,\mathbb{C}^4)$, it holds that
  $$
  \int_{\mathbb{R}^3 }\Gamma *(K|{u}|^\kappa)K|{u}|^{\kappa-2}|v||w|dx\leq  C   \|u\|_{L^2}^{2\kappa-3}\cdot \|u\|_{L^3}\cdot\|v\|_{L^2}\cdot\| w\|_{L^3}.
  $$
\end{lemma}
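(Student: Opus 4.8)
The plan is to follow the same pattern as the proof of Lemma \ref{lemm:2.4}, but to finish with a plain Lebesgue interpolation between $L^2$ and $L^3$ instead of the Gagliardo--Nirenberg step that produced $(-\Delta)^{1/4}$-norms. First I would use $(K_1)$: since $K$ is continuous, positive and vanishes at infinity it is bounded, so $K\le\|K\|_{L^\infty}$, and because $\Gamma>0$ the inequality is preserved under convolution. Hence
\[
\int_{\mathbb{R}^3}\Gamma*(K|u|^\kappa)\,K|u|^{\kappa-2}|v||w|\,dx \;\le\; \|K\|_{L^\infty}^2\int_{\mathbb{R}^3}\bigl(\Gamma*|u|^\kappa\bigr)\,|u|^{\kappa-2}|v||w|\,dx,
\]
which removes the weights and puts the integral into the pure convolution form handled by the weak Young inequality.

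Next I would apply Lemma \ref{lemm:2.1} with $g=\Gamma\in L_w^{t}$, $t=6/(14-6\kappa)$, and with $h=|u|^\kappa$, $f=|u|^{\kappa-2}|v||w|$. Choosing Hölder exponents $r,p\ge 1$ with $\tfrac1r+\tfrac1p+\tfrac1t=2$ yields the bound $C\,\||u|^\kappa\|_{L^r}\,\|\Gamma\|_{t,w}\,\||u|^{\kappa-2}|v||w|\|_{L^p}=C\,\|u\|_{L^{r\kappa}}^{\kappa}\,\||u|^{\kappa-2}|v||w|\|_{L^p}$. I would then split the second factor by the three-exponent Hölder inequality so that $v$ falls into $L^2$ and $w$ into $L^3$, namely $\||u|^{\kappa-2}|v||w|\|_{L^p}\le\|u\|_{L^\rho}^{\kappa-2}\|v\|_{L^2}\|w\|_{L^3}$ with $\tfrac{\kappa-2}{\rho}+\tfrac12+\tfrac13=\tfrac1p$. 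At this stage the integral is controlled by $\|u\|_{L^{r\kappa}}^{\kappa}\,\|u\|_{L^\rho}^{\kappa-2}\,\|v\|_{L^2}\,\|w\|_{L^3}$, and it remains only to tie the two $u$-exponents $r\kappa$ and $\rho$ to $L^2$ and $L^3$.

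The last step is interpolation. Arranging $r\kappa,\rho\in[2,3]$, I would apply Lemma \ref{lemm:2.2} with $m=0$ to write $\|u\|_{L^{r\kappa}}\le\|u\|_{L^2}^{1-\theta_1}\|u\|_{L^3}^{\theta_1}$ and $\|u\|_{L^\rho}\le\|u\|_{L^2}^{1-\theta_2}\|u\|_{L^3}^{\theta_2}$, where the relations force $\theta_1=3-6/(r\kappa)$ and $\theta_2=3-6/\rho$. Substituting gives $\|u\|_{L^2}^{\kappa(1-\theta_1)+(\kappa-2)(1-\theta_2)}\|u\|_{L^3}^{\kappa\theta_1+(\kappa-2)\theta_2}\|v\|_{L^2}\|w\|_{L^3}$, and the claim is precisely that the $u$-exponents equal $2\kappa-3$ and $1$. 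The key point is that this closes automatically: since $\tfrac1t=\tfrac73-\kappa$, the constraint $\tfrac1r+\tfrac1p+\tfrac1t=2$ together with $\tfrac1p=\tfrac{\kappa-2}{\rho}+\tfrac56$ gives $\tfrac{\kappa}{r\kappa}+\tfrac{\kappa-2}{\rho}=\kappa-\tfrac76$, and then $\kappa\theta_1+(\kappa-2)\theta_2=(6\kappa-6)-6\bigl(\kappa-\tfrac76\bigr)=1$, so the $L^2$-power is $2\kappa-2-1=2\kappa-3$ with no residual freedom.

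I expect the genuine obstacle to be not any single estimate but the admissibility check, i.e.\ exhibiting exponents $r,p\ge1$, $\rho\in[2,3]$ and $r\kappa\in[2,3]$ satisfying all the above relations uniformly in $\kappa$. Writing $\tfrac1r=\tfrac{\kappa}{r\kappa}$, the condition reduces to realizing the value $\kappa-\tfrac76$ as the sum $\tfrac1r+\tfrac{\kappa-2}{\rho}$ while keeping $r\kappa\in[\kappa,3]$ (so $r\ge1$) and $\rho\in[2,3]$; as $(r\kappa,\rho)$ range over this box the sum sweeps the interval $\bigl[\tfrac{2\kappa-2}{3},\tfrac{\kappa}{2}\bigr]$. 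The lower bound $\kappa-\tfrac76\ge\tfrac{2\kappa-2}{3}$ is automatic for $\kappa\ge2$, whereas the upper bound $\kappa-\tfrac76\le\tfrac{\kappa}{2}$ is exactly $\kappa\le\tfrac73$. This is precisely where the hypothesis $\kappa\in[2,7/3)$ is used, and it guarantees that an admissible choice exists for every such $\kappa$, completing the argument.
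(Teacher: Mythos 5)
Your proposal is correct and follows essentially the same route as the paper: bound $K$ by its sup norm, apply the weak Young inequality with $\Gamma\in L_w^{6/(14-6\kappa)}$, split $|u|^{\kappa-2}|v||w|$ by H\"older so that $v$ lands in $L^2$ and $w$ in $L^3$, and interpolate the remaining $u$-norms between $L^2$ and $L^3$. The only difference is that the paper fixes the exponents upfront ($p=q=6/(3\kappa-1)$ and $\rho=2$, so only one interpolation is needed), whereas you keep them general and verify admissibility at the end — a harmless variation that makes the role of $\kappa<7/3$ explicit.
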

  \begin{proof}
    For $p$, $q$, $r>0$ satisfying    $\frac{1}{p}+\frac{1}{q}+\frac{1}{t}=2$, where $t=\frac{6}{14-6\kappa}, p=q=\frac{6}{3\kappa-1}$, by the weak Young inequality and the H\"older inequality, we obtain
    $$
    \begin{aligned}
      \int_{\mathbb{R}^3 } (\Gamma *|{u}|^\kappa)|{u}|^{\kappa-2}|v||w|dx\lesssim & \|u\|_{L^{p\kappa}}^\kappa\cdot\||u|^{\kappa-2}vw  \|_{L^q}\\
\lesssim&\|u\|_{L^{p\kappa}}^\kappa\cdot \|u\|_{L^2}^{k-2}\cdot\|v\|_{L^2}\cdot\| w\|_{L^3}.
    \end{aligned}
    $$
    By the interpolation inequality, we have
    $$
    \|u\|_{L^{p\kappa}}^\kappa\leq \|u\|_{L^2}^{\kappa-1}\cdot \|u\|_{L^3}.
    $$
    Therefore,
    $$
  \int_{\mathbb{R}^3 }  \Gamma *(K|{u}|^\kappa)K|{u}|^{\kappa-2}|v||w|dx\lesssim \|u\|_{L^2}^{2\kappa-3}\cdot \|u\|_{L^3}\cdot\|v\|_{L^2}\cdot\| w\|_{L^3}.
    $$
    This ends the proof. 
  \end{proof}

 The following lemma is useful to estimate the Lagrange multiplier .
The proofs of these two lemmas can be found in the appendix.
  \begin{lemma}\label{lemm:2.6}
  Let $P(x)\in L^\infty$, then  $\forall {u}  \in  H^{1/2}(\mathbb{R}^3,\mathbb{C}^4) $, with $\|{u}\|_{L^2}=1$,  ${u}=t w+u^- $, where $w=\frac{{u}_+}{\|{u}_+\|_{L^2}}, t=\sqrt{1-\|u^-\|_{L^2}^2}$, we have the following inequality
    $$
    \int_{\mathbb{R}^3 }P |{u}|^sdx\geq  C  \int_{\mathbb{R}^3 }P |w|^sdx-C   \|u^- \|_{L^2}^2\|w\|_{H^{1/2}}^2- C  \|u^- \|_{H^{1/2}}^2.
    $$
    For $w=\mathbf{U}_{\mathrm{FW}}^{-1}\left(\begin{array}{l}v \\ 0\end{array}\right)$ with $v \in H^1\left(\mathbb{R}^3, \mathbb{C}^2\right)$ and $\|v\|_{L^2}^2=1$, we have

    $$
    \int_{\mathbb{R}^3 } P |{u}|^s dx\geq  C  \int_{\mathbb{R}^3 } P |v|^sdx-C  \|\nabla v\|_{L^2}^s-C   \|u^- \|_{L^2}^2\|w\|_{H^{1/2}}^2- C  \|u^- \|_{H^{1/2}}^2.
    $$
  \end{lemma}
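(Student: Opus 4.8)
The plan is to derive both inequalities from a single elementary convexity estimate combined with the Gagliardo--Nirenberg bound of Corollary~\ref{lemm:2.3}. For $s\ge 1$ one has the pointwise inequality $|a+b|^{s}\ge 2^{1-s}|a|^{s}-|b|^{s}$, which follows from $|a|\le|a+b|+|b|$ and the convexity estimate $(x+y)^{s}\le 2^{s-1}(x^{s}+y^{s})$. Since the orthogonal splitting gives $\|u^{+}\|_{L^{2}}^{2}=1-\|u^{-}\|_{L^{2}}^{2}=t^{2}$, we have $u^{+}=tw$ and $u=tw+u^{-}$; applying the pointwise inequality with $a=tw$, $b=u^{-}$, multiplying by $P\ge 0$ and integrating yields $\int_{\mathbb{R}^{3}}P|u|^{s}\,dx\ge 2^{1-s}t^{s}\int_{\mathbb{R}^{3}}P|w|^{s}\,dx-\int_{\mathbb{R}^{3}}P|u^{-}|^{s}\,dx$.

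It then remains to absorb the factor $t^{s}$ and to estimate the two integrals. Convexity of $\tau\mapsto(1-\tau)^{s/2}$ for $s\ge 2$ gives $t^{s}\ge 1-\frac{s}{2}\|u^{-}\|_{L^{2}}^{2}$, so the leading term is bounded below by $2^{1-s}\int P|w|^{s}\,dx-C\|u^{-}\|_{L^{2}}^{2}\int P|w|^{s}\,dx$. The decisive point is a careful reading of Corollary~\ref{lemm:2.3}: since $\|w\|_{L^{2}}=1$ it gives $\int P|w|^{s}\,dx\le C\|(-\Delta)^{1/4}w\|_{L^{2}}^{3s-6}$, and because $3s-6\le 2$ for $s\le 8/3$ while $\|w\|_{H^{1/2}}\ge\|w\|_{L^{2}}=1$, this upgrades to $\int P|w|^{s}\,dx\le C\|w\|_{H^{1/2}}^{2}$. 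The same corollary, used with $\|u^{-}\|_{L^{2}}\le 1$, gives $\int P|u^{-}|^{s}\,dx\le C\|(-\Delta)^{1/4}u^{-}\|_{L^2}^{3s-6}\|u^{-}\|_{L^{2}}^{6-2s}\le C\|u^{-}\|_{H^{1/2}}^{2}$, where the last step again uses $3s-6\le 2$, $\|u^{-}\|_{L^{2}}\le\|u^{-}\|_{H^{1/2}}$ and $s>2$. Collecting these three estimates produces the first inequality with leading constant $2^{1-s}$.

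For the second inequality I would pass to the Foldy--Wouthuysen representation. Using the explicit $\mathbf{U}^{-1}(\xi)$ computed above, $\widehat{w}(\xi)=\mathbf{U}^{-1}(\xi)(\hat v,0)^{T}$ splits as $w=(w_{1},w_{2})^{T}$ with $\widehat{w_{1}}=\Upsilon_{+}\hat v$ and $\widehat{w_{2}}=\Upsilon_{-}\frac{\sigma\cdot\xi}{|\xi|}\hat v$. Because $|w|\ge|w_{1}|$, it suffices to compare $w_{1}$ with $v$: writing $v=w_{1}+(v-w_{1})$ and using the pointwise inequality once more gives $\int P|w|^{s}\,dx\ge\int P|w_{1}|^{s}\,dx\ge 2^{1-s}\int P|v|^{s}\,dx-\int P|v-w_{1}|^{s}\,dx$. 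The symbol bound $1-\Upsilon_{+}\le 1-\Upsilon_{+}^{2}=\Upsilon_{-}^{2}=\frac{1}{2}\left(1-mc^{2}/\lambda\right)\lesssim |\xi|^{2}/c^{2}$, together with $0\le 1-\Upsilon_{+}\le 1$, yields $(1-\Upsilon_{+})^{2}\lesssim |\xi|^{2}/c^{2}$ as well as $(1-\Upsilon_{+})^{2}\le 1$, whence $\|v-w_{1}\|_{L^{2}}\lesssim c^{-1}\|\nabla v\|_{L^{2}}$ and $\|\nabla(v-w_{1})\|_{L^{2}}\le\|\nabla v\|_{L^{2}}$. For $c$ large this gives $\|v-w_{1}\|_{H^{1}}\le C\|\nabla v\|_{L^{2}}$, so Sobolev embedding ($H^{1}\hookrightarrow L^{s}$, $s\le 8/3<6$) produces $\int P|v-w_{1}|^{s}\,dx\le C\|\nabla v\|_{L^{2}}^{s}$. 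Chaining this with the first inequality gives the second, with leading constant $2^{2-2s}$.

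The main obstacle is the symbol analysis in the second inequality: one must quantify how far the relativistic profile $w$ is from its Schr\"odinger companion $v$, and the whole scheme works only because the off-diagonal symbol satisfies the quadratic decay $1-\Upsilon_{+}(\xi)\lesssim|\xi|^{2}/c^{2}$, which is exactly what converts the error into $\|\nabla v\|_{L^{2}}^{s}$ after interpolating the $L^{2}$- and $\dot H^{1}$-bounds for $v-w_{1}$ and invoking the Sobolev embedding. A more delicate but purely bookkeeping issue, already present in the first inequality, is that the quadratic form of the error terms $\|u^{-}\|_{L^{2}}^{2}\|w\|_{H^{1/2}}^{2}$ and $\|u^{-}\|_{H^{1/2}}^{2}$ emerges only upon exploiting simultaneously the constraints $s\le 8/3$, $s>2$, $\|w\|_{L^{2}}=1$ and $\|u^{-}\|_{L^{2}}\le 1$; should any one of these fail, a non-quadratic remainder would survive.
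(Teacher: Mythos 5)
Your proof is correct and follows essentially the same route as the paper's: the pointwise inequality $|a+b|^s\ge 2^{1-s}|a|^s-|b|^s$ applied to $u=tw+u^-$, the Bernoulli-type bound on $t^s$, Corollary \ref{lemm:2.3} for the remainder terms, and, for the second inequality, the comparison of $v$ with $\mathcal{F}^{-1}[\Upsilon_+\hat v]$ via the symbol estimate $1-\Upsilon_+\le\Upsilon_-^2$. The only immaterial deviation is that you control the error $\mathcal{F}^{-1}[(1-\Upsilon_+)\hat v]$ in $H^1$ and invoke the Sobolev embedding, whereas the paper bounds its $\|\cdot\|_c$-norm and reuses Corollary \ref{lemm:2.3}.
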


  \begin{lemma}\label{lemm:2.7}
    Let  $p\in [2,3]$, $\Gamma   $  satisfies $(\Gamma_1)$,
    set $$\mathscr{F}( u )=\Gamma * (K| u |^\kappa) K| u |^{\kappa-1},$$then
    $\mathscr{F}: H^{1/2}\to L^p$ is locally Lipschitz.  
  \end{lemma}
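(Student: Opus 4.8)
The plan is to establish local Lipschitz continuity by a duality argument: fixing a ball $\{\|u\|_{H^{1/2}}\le R\}$, for $u,v$ in this ball I would estimate $\|\mathscr{F}(u)-\mathscr{F}(v)\|_{L^p}$ by testing against an arbitrary $\phi\in L^{p'}$ with $\|\phi\|_{L^{p'}}=1$ and bounding the resulting integral by $C(R)\,\|u-v\|_{H^{1/2}}$. At the outset I would record the two structural facts used throughout: assumption $(K_1)$ gives $K\in L^\infty(\mathbb{R}^3)$ (a continuous function vanishing at infinity is bounded), and the Sobolev embedding $H^{1/2}(\mathbb{R}^3)\hookrightarrow L^q(\mathbb{R}^3)$ holds for every $q\in[2,3]$, so any Lebesgue norm with exponent in that range is controlled by the $H^{1/2}$-norm.

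Next I would split the difference by adding and subtracting a cross term,
$$\mathscr{F}(u)-\mathscr{F}(v) = \Gamma*\bigl(K(|u|^\kappa-|v|^\kappa)\bigr)\,K|u|^{\kappa-1} + \Gamma*\bigl(K|v|^\kappa\bigr)\,K\bigl(|u|^{\kappa-1}-|v|^{\kappa-1}\bigr),$$
and treat each summand separately. The elementary pointwise inequalities $\bigl||u|^\kappa-|v|^\kappa\bigr|\lesssim(|u|^{\kappa-1}+|v|^{\kappa-1})|u-v|$ (valid since $\kappa\ge 2$) and $\bigl||u|^{\kappa-1}-|v|^{\kappa-1}\bigr|\lesssim(|u|^{\kappa-2}+|v|^{\kappa-2})|u-v|$ (for $\kappa>2$, with the trivial bound $\bigl||u|-|v|\bigr|\le|u-v|$ when $\kappa=2$) reduce each summand to a product in which exactly one factor is $|u-v|$ and the remaining $2\kappa-2$ factors are powers of $|u|$ or $|v|$. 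For the first summand, after testing against $\phi$ I would apply the weak Young inequality of Lemma~\ref{lemm:2.1} with $g=\Gamma$ (so that $\|\Gamma\|_{q,w}<\infty$ for $1/q=7/3-\kappa$ by $(\Gamma_1)$), placing $f=K|u|^{\kappa-1}\phi$ and $h=K(|u|^\kappa-|v|^\kappa)$ in the two Lebesgue exponents; Hölder's inequality then splits each of $f,h$ into single powers of $u$, $v$, $u-v$, and $\phi$.

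The crux is the bookkeeping of exponents. Writing the weak Young constraint as $1/r_f+1/q+1/r_h=2$ and subtracting the contribution $1/p'=1-1/p$ of the test function, the reciprocals of the exponents carrying the $2\kappa-1$ remaining factors must sum to $\kappa-\tfrac43+\tfrac1p$. I would verify that this total lies in $\bigl[\tfrac{2\kappa-1}{3},\tfrac{2\kappa-1}{2}\bigr]$ — equivalently, that the average reciprocal lies in $[\tfrac13,\tfrac12]$ — exactly when $\kappa\in[2,7/3)$ and $p\in[2,3]$: the lower bound reduces to $3-3/p\le\kappa$ and the upper bound to $1/p\le\tfrac56$, both of which hold throughout these ranges. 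Distributing the reciprocals equally across the factors then places every power of $u,v$ and the single factor $|u-v|$ into some $L^q$ with $q\in[2,3]$; the Sobolev embedding bounds the powers of $u,v$ by $R$ and the difference factor by $\|u-v\|_{H^{1/2}}$, while $\|\phi\|_{L^{p'}}=1$.

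Taking the supremum over $\phi$ gives $\|\mathscr{F}(u)-\mathscr{F}(v)\|_{L^p}\le C(R)\,\|u-v\|_{H^{1/2}}$ for the first summand. The second summand is handled identically, since it again carries $2\kappa-1$ factors ($\kappa$ from $|v|^\kappa$ and $\kappa-1$ from the bound on $|u|^{\kappa-1}-|v|^{\kappa-1}$) and therefore obeys the same reciprocal budget. I expect the exponent-feasibility verification to be the only genuine obstacle; once the admissible range of exponents is secured, the estimate is a mechanical application of Lemma~\ref{lemm:2.1}, Hölder's inequality, and the embedding $H^{1/2}\hookrightarrow L^q$.
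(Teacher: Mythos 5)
Your plan follows the same circle of ideas as the paper's proof (split the difference with a cross term, use the elementary pointwise inequalities for $|u|^\kappa-|v|^\kappa$ and $|u|^{\kappa-1}-|v|^{\kappa-1}$, then combine the convolution inequality for $\Gamma$, H\"older, and the embedding $H^{1/2}\hookrightarrow L^q$ for $q\in[2,3]$), but the duality step you add creates a constraint that your exponent check does not verify and that actually fails on part of the stated range. Lemma~\ref{lemm:2.1} requires all three exponents to be at least $1$. In your application to the first summand you must place $f=K|u|^{\kappa-1}\phi$ in $L^{r_f}$ with $\tfrac{1}{r_f}=\tfrac{1}{p'}+\tfrac{\kappa-1}{q}$ for some $q\in[2,3]$ (the exponent carrying $u$ must stay in $[2,3]$ or the embedding is lost), hence $\tfrac{1}{r_f}\ge 1-\tfrac1p+\tfrac{\kappa-1}{3}$, and this exceeds $1$ as soon as $\kappa>1+\tfrac3p$. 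For $p=3$ that is every $\kappa\in(2,7/3)$, so $r_f<1$ and Lemma~\ref{lemm:2.1} is inapplicable; no redistribution of the reciprocals can repair it, because the displayed bound is the minimum over all admissible choices. Your feasibility test --- that the \emph{average} per-factor reciprocal lies in $[\tfrac13,\tfrac12]$ --- is necessary but not sufficient: you also need $r_f\ge1$ and $r_h\ge1$ separately. Even where a feasible choice exists (e.g.\ $p=2$), the ``equal distribution'' you propose already gives $\tfrac{1}{r_f}>1$ once $\kappa$ is close to $7/3$, so at the very least the distribution must be made non-uniform there.

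The source of the trouble is that dualizing forces $\phi$ into the same H\"older slot as $K|u|^{\kappa-1}$, on the non-convolved side of the trilinear inequality. The paper avoids this entirely: it estimates $\|\mathscr{F}(u_1)-\mathscr{F}(u_2)\|_{L^p}$ directly for $p=2$ and $p=3$ only, applying H\"older to the product and the \emph{bilinear} convolution inequality $\|\Gamma*w\|_{L^t}\lesssim\|w\|_{L^r}$ with $\tfrac1t+1=\tfrac{14-6\kappa}{6}+\tfrac1r$ to the convolution factor alone, and then interpolates to get $p\in(2,3)$; it also uses a symmetric decomposition of $\mathscr{F}(u_1)-\mathscr{F}(u_2)$, though that difference is cosmetic. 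If you wish to keep your asymmetric decomposition, the fix is to drop the duality step: bound $\|\Gamma*(K(|u|^\kappa-|v|^\kappa))\|_{L^t}$ first by the bilinear inequality and then multiply by $K|u|^{\kappa-1}$ via H\"older directly into $L^p$, so that the test function never competes for the budget of the convolution inequality.
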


\section{The existence of normalized solutions}

In this section, we assume that $c$ large enough. Thus, we have $E_c\cong H^{1/2}$ and
 \[c\|u\|_{H^{1/2}}^2  \leq  \| u\|_c^2\leq  mc^2 \|u\|_{H^{1/2}}^2 .\]
 First, we introduce the energy functional $\mathcal{I}^c: E_c\rightarrow \mathbb{R}$  related to \ref{eq:Dirac} :
$$
\begin{aligned}
  \mathcal{I}^c({u})=&\int_{\mathbb{R}^3}({u}, \mathscr{D}_c {u})_{\mathbb{C}^4}dx-\frac{1  }{\kappa} \int_{\mathbb{R}^3 } \Gamma *(K|{u}|^\kappa)K|{u}|^\kappa d x -\frac{2}{s}\int_{\mathbb{R}^3}P |{u}|^s d x .
\end{aligned}
$$
By the spectral properties of $\mathscr{D}_c$, the functional $\mathcal{I}_c$ can be reformed as
$$
\begin{aligned}
  \mathcal{I}^c({u})
=&\|u^+\|_c^2-\| u^-\|_c^2-\frac{1  }{\kappa} \int_{\mathbb{R}^3 } \Gamma *(K|{u}|^\kappa)K|{u}|^\kappa  dx -\frac{2}{s}\int_{\mathbb{R}^3}P |{u}|^sdx.
\end{aligned}
$$
\begin{definition}
We say $(u, \omega)\in H^{1/2}(\mathbb{R}^3,\mathbb{C}^4)\times\mathbb{R}$ is a (weak) normalized solution of \ref{eq:Dirac}  if and only if for any $v\in \mathcal{C}_0^\infty(\mathbb{R}^3,\mathbb{C}^4)$, we have
$$
d  \mathcal{I}^c(u)[v]= 2\omega \Re \int_{\mathbb{R}^3}u\cdot vdx.
$$
Moreover, if the weak normalized solution $(u, \omega)\in H^{1}(\mathbb{R}^3,\mathbb{C}^4)\times\mathbb{R}$, then $(u,\omega)$ is called strong normalized solution of \ref{eq:Dirac}.
\end{definition}
It is clear that (weak) normalized solutions of \ref{eq:Dirac} correspond to critical points of the functional $\mathcal{I}^c$ restricted on the $L^2$-sphere
$$
S =\{u\in E_c: \|u\|_{L^2} =1 \}.
$$
Next, we will use the reduction method to study the normalized solutions of  \ref{eq:Dirac} .

\subsection{Maximization problem}
\quad For any $w\in E_c^+$ with $\|w\|_{L^2}=1 $, denote $W=\operatorname{span}\{w\}$ and set
$$
S_W=\{ u\in S: u^+\in W \}.
$$
Our  first step is to maximize the functional $\mathcal{I}^c$ on the space $S_W$. The tangent space of $S_W$ at $u\in S_W$ is given by$$
T_u(S_W)=\{h\in W\oplus E_c^-:\Re(u,h)_{L^2}=0 \}.
$$
We denote the projection of the gradient $d \mathcal{I}^c(u)$ on $T_u(S_W)$   by $d \mathcal{I}^c|_{S_W}(u)$, which is given by
$$
d \mathcal{I}^c|_{S_W}(u)[ h] = d \mathcal{I}^c(u)[h]- 2\omega(u)\Re(u,h)_{L^2},\quad \forall h\in W\oplus E_c^-,
$$
where $\omega(u)\in \mathbb{R}$ is such that $d \mathcal{I}^c|_{S_W}(u)\in T_u(S_W).$
We also denote 
$$S^+ =\{ u\in E_c^+: \|u\|_{L^2}=1 \} .$$
 For any $w \in S^+$, we say $\left\{{u}_n\right\} \subset   S_W$ is a Palais-Smale sequence of $\mathcal{I}^c$ on $S_W$ with value $e_c$ if 
  $$
  \mathcal{I}^c\left({u}_n\right) \rightarrow e_c \text { and }\left\| d \mathcal{I}^c  \big|_{S_W} \left({u}_n\right)\right\|   \rightarrow 0, \quad \text { as } \quad n \rightarrow+\infty .
  $$
  Our first results is about the compactness of Palais-Smale sequence of $\mathcal{I}^c$ on $  S_W$.
\begin{proposition}\label{prop:3.1}
If  $\left\{{u}_n\right\} \subset   S_W$ is a Palais-Smale sequence of $\mathcal{I}^c$ on $  S_W$ with value $e_c$,
then we have
  \begin{itemize}
  \item[(i)] $\{u_n\}$ is bounded in $E_c$.
  \item[(ii)]
  If $e_c>0$, then there exists $c_0>0$, such that for $c>c_0$, we have $$\liminf\limits_{n \rightarrow+\infty} \omega\left({u}_n\right)>0.$$ 
  \item[(iii)] Under the assumptions of (ii), the sequence $\left\{{u}_n\right\}$ is precompact in $   H^{1/2}(\mathbb{R}^3,\mathbb{C}^4)$.
  \end{itemize}
\end{proposition}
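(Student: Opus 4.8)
The overall strategy exploits two structural features: the potentials are positive, so both nonlinear integrals $N_1(u):=\int_{\mathbb{R}^3}\Gamma*(K|u|^\kappa)K|u|^\kappa\,dx$ and $N_2(u):=\int_{\mathbb{R}^3}P|u|^s\,dx$ are nonnegative; and on $S_W$ the positive part lives in the finite--dimensional space $W=\operatorname{span}\{w\}$, so $u_n^+=t_nw$ with $|t_n|=\|u_n^+\|_{L^2}\le 1$. For (i), this gives at once $\|u_n^+\|_c^2=|t_n|^2\|w\|_c^2\le\|w\|_c^2$, and since
\[ \mathcal{I}^c(u_n)=\|u_n^+\|_c^2-\|u_n^-\|_c^2-\frac1\kappa N_1(u_n)-\frac2s N_2(u_n) \]
with the last two terms nonnegative, I obtain $\|u_n^-\|_c^2\le\|w\|_c^2-\mathcal{I}^c(u_n)$, which is bounded because $\mathcal{I}^c(u_n)\to e_c$. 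Hence $\{u_n\}$ is bounded in $E_c$.

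For (ii) I first record the exact value of the multiplier. Since the normal direction to $S_W$ at $u$ is $\mathbb{R}u$ and $\Re(u,u)_{L^2}=1$, the defining relation gives $\omega(u)=\frac12 d\mathcal{I}^c(u)[u]=\|u^+\|_c^2-\|u^-\|_c^2-N_1(u)-N_2(u)$, so that
\[ \omega(u_n)=\mathcal{I}^c(u_n)-\Big(1-\frac1\kappa\Big)N_1(u_n)-\Big(1-\frac2s\Big)N_2(u_n). \]
By Lemma \ref{lemm:2.4} and Corollary \ref{lemm:2.3}, $N_1(u_n)\lesssim\|u_n\|_{H^{1/2}}^2$ and $N_2(u_n)\lesssim\|u_n\|_{H^{1/2}}^{3s-6}$; combined with $c\|u_n\|_{H^{1/2}}^2\le\|u_n\|_c^2$ and the bound from (i), both are negligible against the leading term $\|u_n^+\|_c^2\ge mc^2\|u_n^+\|_{L^2}^2$. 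The one quantity still to control is $\|u_n^-\|_c$. Testing the Palais--Smale relation with $h=u_n^-\in E_c^-$ (on which $\mathscr{D}_c=-|\mathscr{D}_c|$, so $(\mathscr{D}_c u_n,u_n^-)=-\|u_n^-\|_c^2$) yields
\[ \|u_n^-\|_c^2+\omega(u_n)\|u_n^-\|_{L^2}^2=-R_n+o(1)\|u_n^-\|_c, \]
where $R_n$ collects the nonlinear cross terms. Lemma \ref{lemm:2.5}, Hölder and Sobolev inequalities, and $\|\cdot\|_{H^{1/2}}^2\le c^{-1}\|\cdot\|_c^2$ make $R_n$ of strictly lower order in $c$ than $\|u_n^-\|_c^2$. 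On the left, either $\omega(u_n)\ge0$, in which case the second term helps, or $\omega(u_n)<0$, in which case the identity for $\omega(u_n)$ together with $e_c>0$ forces $|\omega(u_n)|\le N_1(u_n)+N_2(u_n)=o(mc^2)$, so that using $\|u_n^-\|_c^2\ge mc^2\|u_n^-\|_{L^2}^2$ the left side still dominates $\frac12\|u_n^-\|_c^2$. Either way $\|u_n^-\|_c=o(\|w\|_c)$ and $\|u_n^-\|_{L^2}^2$ is small for $c$ large. Substituting back and using $\|u_n^+\|_c^2\ge(1-\|u_n^-\|_{L^2}^2)mc^2=(1-o(1))mc^2$, the remaining contributions being $o(mc^2)$, gives $\omega(u_n)\ge\frac12 mc^2$ for $c$ large, hence $\liminf_n\omega(u_n)>0$.

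For (iii), boundedness in $E_c\cong H^{1/2}$ gives a subsequence with $u_n\rightharpoonup u$ in $H^{1/2}$, $u_n\to u$ in $L^p_{\mathrm{loc}}$ ($p<3$) and a.e.; the finite dimension of $W$ gives $t_n\to t$ and $u_n^+\to u^+=tw$ strongly, while (ii) and the upper bound $\omega(u_n)\le\mathcal{I}^c(u_n)\to e_c<mc^2$ give $\omega(u_n)\to\omega_c\in(0,mc^2)$, a point in the spectral gap. The decay hypotheses $(K_1),(P_1)$ with $(\Gamma_1)$ and Lemmas \ref{lemm:2.4}--\ref{lemm:2.7} make the nonlinear operator $\mathcal{N}(u):=\Gamma*(K|u|^\kappa)K|u|^{\kappa-2}u+P|u|^{s-2}u$ compact, so $\mathcal{N}(u_n)\to\mathcal{N}(u)$ in $E_c^*$ (the tails $|x|>R$ are controlled by the smallness of $K,P$, the core $|x|\le R$ by Rellich). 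Passing to the limit in the constrained relation shows $u$ weakly solves the equation with multiplier $\omega_c$. Testing the relation for $u_n$ with $h=u_n^-$ and letting $n\to\infty$ gives $\|u_n^-\|_c^2\to-\omega_c(1-|t|^2)-\Re(\mathcal{N}(u),u^-)_{L^2}=:L$, while testing the limit equation with $u^-$ gives $\|u^-\|_c^2=-\omega_c\|u^-\|_{L^2}^2-\Re(\mathcal{N}(u),u^-)_{L^2}$. Since $L-\|u^-\|_c^2=\omega_c(\|u\|_{L^2}^2-1)\le0$ whereas weak lower semicontinuity forces $\|u^-\|_c^2\le L$, both inequalities are equalities: no $L^2$ mass escapes and $\|u_n^-\|_c\to\|u^-\|_c$. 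With $u_n^-\rightharpoonup u^-$ this yields $u_n^-\to u^-$, and together with the strong convergence of $u_n^+$ I conclude $u_n\to u$ in $H^{1/2}$.

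The main obstacle is the lower bound on the multiplier in (ii): both $N_1,N_2$ and the cross term $R_n$ must be shown to be genuinely of lower order in $c$ than the leading quantity $mc^2$, which requires the $c$--dependent norm comparisons to be applied carefully, together with the sign dichotomy on $\omega(u_n)$ in which the hypothesis $e_c>0$ is used. The compactness in (iii) is then delicate but clean once one observes that the positivity $\omega_c>0$ converts the spectral--gap obstruction into the squeeze $\omega_c(\|u\|_{L^2}^2-1)\le0$, which simultaneously rules out loss of $L^2$ mass at infinity and delivers convergence of the energy norm.
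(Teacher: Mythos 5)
Your part (i) is identical to the paper's. Parts (ii) and (iii) are genuinely different routes, and the comparison is instructive. For (ii), the paper never writes down the full identity $\omega(u_n)=\|u_n^+\|_c^2-\|u_n^-\|_c^2-N_1-N_2$; instead it tests the Palais--Smale relation with $h=u_n^+$ alone, so the troublesome $-\|u_n^-\|_c^2$ never appears: from $e_c>0$ one gets $\|u_n^+\|_c^2\ge\tfrac12\|u_n\|_c^2\ge\tfrac{c}{2}\|u_n\|_{H^{1/2}}^2$, the cross terms are bounded by $C\|u_n\|_{H^{1/2}}^2$ with $C$ independent of $c$ and of $w$, and dividing by $\|u_n^+\|_{L^2}^2\le1$ finishes the proof with a $c_0$ uniform in $w$. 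Your route forces you to control $\|u_n^-\|_c$ separately, and the key assertion there --- that $R_n$ is ``of strictly lower order in $c$ than $\|u_n^-\|_c^2$'' --- is false as stated: $R_n$ contains terms such as $\int P|u_n|^{s-2}\Re(u_n^+,u_n^-)\,dx$ which are only \emph{linear} in $u_n^-$, so when $\|u_n^-\|_c$ is small $R_n$ dominates $\|u_n^-\|_c^2$ rather than the reverse. The step is repairable: $|R_n|\lesssim\|u_n\|_{H^{1/2}}^2\|u_n^-\|_{L^2}\lesssim c^{-1}\|w\|_c^2(mc^2)^{-1/2}\|u_n^-\|_c$, and Young's inequality absorbs the linear term to give $\|u_n^-\|_c^2\lesssim\|w\|_{H^{1/2}}^4+o(1)$, hence $\|u_n^-\|_{L^2}^2=O(c^{-2})$, after which your conclusion follows --- but note the resulting $c_0$ depends on $\|w\|_{H^{1/2}}$, which matters when the proposition is applied along a minimizing family of subspaces $W$.

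For (iii) your energy-comparison squeeze ($L-\|u^-\|_c^2=\omega_c(\|u\|_{L^2}^2-1)\le0$ against weak lower semicontinuity, recovering both $\|u\|_{L^2}=1$ and $\|u_n^-\|_c\to\|u^-\|_c$ in one stroke) is cleaner than the paper's argument, which works directly with $h=u_n^--u^-$, keeps the nonnegative quadratic nonlinear terms, and kills the cross terms by splitting $\mathbb{R}^3$ into $B_R$ and its complement using the integrability of the fixed function $u^-$ plus Rellich. Both exploit $\omega>0$ in the same essential way. The one thing you owe the reader is the compactness of $\mathcal{N}$ as a map into $E_c^*$ strong enough that $\Re(\mathcal{N}(u_n),u_n^-)\to\Re(\mathcal{N}(u),u^-)$ with $u_n^-$ only weakly convergent; this is where $(K_1)$, $(P_1)$ and the tail/core splitting actually enter, and it is asserted rather than proved in your write-up.
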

\begin{proof}
\begin{itemize}
\item[(i)] Since $u_n=tw+u_n^-$ and $\|u_n\|_{L^2}=1$, then we have $\|u_n^+\|_c^2\leq \|w\|_c^2$. Combine with 
  $$
  \mathcal{I}^c(u_n)\leq  \|u_n^+\|_c^2-\|u_n^-\|_c^2,
  $$
 we have $$\|u_n^-\|_c^2\leq \|w\|_c^2-\mathcal{I}^c(u_n),$$ 
 which implies the boundness of $\{u_n\}$.

\item[(ii)] It follows directly from $e_c>0$ that $\| u_n^+\| _{ c}^2 \geq \| u_n^-\| _{  c}^2$ for $n$ sufficiently large. Therefore, we have $$\|u_n^+\|_c^2\geq \frac{1}{2}\|u_n\|_c^2\geq \frac{c}{2}\|u_n\|_{H^{1/2}}^2.$$
By Lemma \ref{lemm:2.2} and $\{u_n\}\subset S_W$, we have 
\[\int_{\mathbb{R}^3} |u_n|^s dx\lesssim \left\|(-\Delta)^{1/4} u_n\right\|_{L^2}^{3s-6}\cdot \left\| u_n\right\|_{L^2}^{6-2s}\lesssim \|u_n\|_{H^{1/2}}^{3s-6},\]
\[\int_{\mathbb{R}^3} |u_n^+|^s dx\lesssim \left\|(-\Delta)^{1/4} u_n^+\right\|_{L^2}^{3s-6}\cdot \left\| u_n^+\right\|_{L^2}^{6-2s}\lesssim \|u_n\|_{H^{1/2}}^{3s-6}.\]
Combine with the Sobolev inequality
\[\int_{\mathbb{R}^3} |u_n|^sdx\lesssim \|u_n\|_{H^{1/2}}^s,\quad \int_{\mathbb{R}^3} |u_n^+|^sdx\lesssim \|u_n\|_{H^{1/2}}^s,\]
and set $t=(8-3s)/(6-2s)\in (0,1)$, we obtain
\[\int_{\mathbb{R}^3} |u_n|^s dx=\left(\int_{\mathbb{R}^3} |u_n|^s dx\right)^{st} \cdot \left(\int_{\mathbb{R}^3} |u_n|^s dx\right)^{s(1-t)} \lesssim \|u_n\|_{H^{1/2}}^{st}\cdot \|u_n\|_{H^{1/2}}^{(3s-6)(1-t)}=\|u_n\|_{H^{1/2}}^2, \]
\[\int_{\mathbb{R}^3} |u_n^+|^s dx=\left(\int_{\mathbb{R}^3} |u_n^+|^s dx\right)^{st} \cdot \left(\int_{\mathbb{R}^3} |u_n^+|^s dx\right)^{s(1-t)} \lesssim \|u_n\|_{H^{1/2}}^{st}\cdot \|u_n\|_{H^{1/2}}^{(3s-6)(1-t)}=\|u_n\|_{H^{1/2}}^2. \]
Thus, we have 
\[\int_{\mathbb{R}^3}P(x)|u_n|^{s-2}\Re (u_n,u_n^+)dx \lesssim \int_{\mathbb{R}^3}|u_n|^{s-1}|u_n^+|dx \lesssim \left(\int_{\mathbb{R}^3}|u_n|^s\right)^{\frac{s-1}{s}}\cdot \left(\int_{\mathbb{R}^3} |u_n^+|^s\right)^{\frac{1}{s}}\lesssim \|u_n\|_{H^{1/2}}^2.\]
Similarly,  
\[\int_{\mathbb{R}^3}\Gamma *(K|{u}_n|^\kappa)K|{u}_n|^{\kappa-2}\Re({u}_n, u_n^+)   dx\lesssim \|u_n\|_{H^{1/2}}^2.\]
Therefore, we get
$$
\begin{aligned}
\frac{1}{2} d \mathcal{I}^c\left({u}_n\right)\left[u_n^+\right]&=\omega\left({u}_n\right)\| u_n^+\| _{L^2}^2+o(1)\\
&=\| u_n^+\|^2- \int_{\mathbb{R}^3}\Gamma *(K|{u}_n|^\kappa)K|{u}_n|^{\kappa-2}\Re({u}_n, u_n^+)  dx- \int_{\mathbb{R}^3}P(x) |{u}_n|^{s-2}\Re({u}_n, u_n^+ ) dx\\
&\geq  c\|u_n\|_{H^{1/2}} ^2-C \|u_n\|_{H^{1/2}}^2>0,
\end{aligned}
$$
where $c$ is large enough.
Hence we get
$\liminf\limits_{n \rightarrow+\infty} \omega\left({u}_n\right)>0.$\\

\item[(iii)] By (i), we get $u_n\rightharpoonup u$ in $E_c$ (up to a subsequence). Since $\operatorname{dim} W=1$, hence $u_n^+ \rightarrow u^+ $ in $E_c$. In addition, we have
$$
\begin{aligned}
& o(1)=-\frac{1}{2} d \mathcal{I}^c\left({u}_n\right)\left[u_n^--u^- \right]+\omega\left({u}_n\right)\| u_n^--u^- \| _{L^2}^2 \\
\geq  &\| u_n^--u^- \|_c ^2 + \int_{\mathbb{R}^3}\Gamma *(K|{u}_n|^\kappa)K|{u}_n|^{\kappa-2}\Re({u}_n, u_n^- - u^-) dx
+\int_{\mathbb{R}^3}P |{u}_n|^{s-2}\Re({u}_n, u_n^- - u^-)dx\\
\geq  &\| u_n^--u^- \|_c^2+   \int_{\mathbb{R}^3}\Gamma *(K|{u}_n|^\kappa)K|{u}_n|^{\kappa-2}|u_n^- - u^-|^2 dx+\int_{\mathbb{R}^3}P |{u}_n|^{s-2}|u_n^- - u^-|^2 dx\\
-&  \int_{\mathbb{R}^3}\Gamma *(K|{u}_n|^\kappa)K|{u}_n|^{\kappa-2}(x) (|u^- |+| u_n^+|) |u_n^- - u^-| dx-\int_{\mathbb{R}^3}P |{u}_n|^{s-2}(|u^- |+| u_n^+|) |u_n^- - u^-|dx.
\end{aligned}
$$
For any $\varepsilon >0$, there exists $R>0$ large, such that
$$
\int_{\mathbb{R}^3\backslash B_R}|u^-|^3dx <\varepsilon.
$$
Combine with (i), we have
$$
\begin{aligned}
 & \int_{\mathbb{R}^3}\Gamma *(K|{u}_n|^\kappa)|{u}_n|^{\kappa-2}|u^- | |u_n^- - u^-| dx\\
  =&\int_{\mathbb{R}^3\backslash B_R}\Gamma *(K|{u}_n|^\kappa)|{u}_n|^{\kappa-2}|u^-| |u_n^- - u^-| dx\\
&+\int_{B_R}\Gamma *(K|{u}_n|^\kappa)|{u}_n|^{\kappa-2} |u^- | |u_n^- - u^-| dx\\
\leq &\| {u}_n\| _{L^3} \| {u}_n\| _{L^2}^{2\kappa-3}\| u^- \| _{L^3(\mathbb{R}^3\backslash B_R)}\| u_n^- -u^- \| _{L^2}\\
&+\| {u}_n\| _{L^3} \| {u}_n\| _{L^2}^{2\kappa-3}\| u^- \| _{L^3}\| (u_n^--u^- )\| _{L^2(B_R)}\\
\leq &\varepsilon + o(1).
\end{aligned}
$$
By the arbitrary of $\varepsilon$, one get
$$
\int_{\mathbb{R}^3}\Gamma *(K|{u}_n|^\kappa)K|{u}_n|^{\kappa-2} |u^- | |u_n^- - u^-| dx=o(1).
$$
Similarly, we also have
$$
\int_{\mathbb{R}^3}\Gamma *(K|{u}_n|^\kappa)K|{u}_n|^{\kappa-2} | u_n^+| |u_n^- - u^-| dx = o(1).
$$
By the H\"older inequality, we have
$$
\int_{\mathbb{R}^3}P |{u}_n|^{s-2} (|u^- |+| u_n^+|)  |u_n^- - u^-| dx    \lesssim \|{u}_n\|_{L^3}^{s-2}\left\| |u^- |+| u_n^+| \right\|_{L^{6/(5-s)}}\|u_n^- - u^-\|_{L^{6/(5-s)}}.
$$
This implies that
$$\int_{\mathbb{R}^3}P |{u}_n|^{s-2} (|u^- |+| u_n^+|)  |u_n^- - u^-|dx=o(1).$$
Hence
$$\| u_n^--u^- \| _{   H^{1/2}} =o(1).$$
This ends the proof of (iii).
\end{itemize}
\end{proof}

Combine Lemma \ref{lemm:2.3} and Lemma \ref{lemm:2.4}, and a similar argument in \cite[Proposition 3.2]{MR4327893}, one can get the following results which implies
the critical point of $ \mathcal{I}^c$ on $S_W$ at positive levels are strict local maxima.

\begin{lemma}\label{lemm:3.1}
If ${u}\in E_c$ is a critical point of $\mathcal{I}^c$ on $  S_W$ at a positive level, that is
$$d \mathcal{I}^c({u})[h]-2 \omega({u}  ) \Re({u}, h)_{L^2}=0,\quad \forall h \in W \oplus E_c^-,\quad \text{and}\quad \mathcal{I}^c({u}  )>0.$$
 Then
  $$
  d^2 \mathcal{I}^c({u})[h, h]-2 \omega({u})\|h\|_{L^2}^2<0.
  $$
  Hence  ${u}$ is a strict local maximum of $\mathcal{I}^c$ on $  S_W$.
\end{lemma}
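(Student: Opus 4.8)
The plan is to compute the constrained Hessian $d^2\mathcal{I}^c(u)[h,h]-2\omega(u)\|h\|_{L^2}^2$ for $h\in T_u(S_W)$, exhibit it as a sum in which every term is nonpositive except the single contribution $2\|h^+\|_c^2$, and then show that this positive term is strictly dominated once $c$ is large. First I would differentiate twice. The quadratic part $\|u^+\|_c^2-\|u^-\|_c^2$ contributes $2\|h^+\|_c^2-2\|h^-\|_c^2$, while the two nonlinear terms contribute $-\tfrac1\kappa d^2G(u)[h,h]-\tfrac2s d^2N(u)[h,h]$, where $G(u)=\int_{\mathbb{R}^3}\Gamma*(K|u|^\kappa)K|u|^\kappa\,dx$ and $N(u)=\int_{\mathbb{R}^3}P|u|^s\,dx$. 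A direct computation gives $d^2N(u)[h,h]=\int_{\mathbb{R}^3}P\big[s(s-2)|u|^{s-4}(\Re(u,h)_{\mathbb{C}^4})^2+s|u|^{s-2}|h|^2\big]\,dx\ge 0$ since $s\ge2$ and $P>0$; likewise, writing $\phi=K|u|^\kappa\ge0$ and using $\kappa\ge2$ together with positivity of the kernel $\Gamma$, one checks $d^2G(u)[h,h]=2\int_{\mathbb{R}^3}(\Gamma*\phi'')\phi\,dx+2\int_{\mathbb{R}^3}(\Gamma*\phi')\phi'\,dx\ge0$. Thus both nonlinear terms enter with a favorable (nonpositive) sign.

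Next I would record that at a positive level $\omega(u)>0$ (this is exactly the mechanism of Proposition \ref{prop:3.1}(ii)), so the multiplier term $-2\omega(u)\|h\|_{L^2}^2$ is also nonpositive, and that $\mathcal{I}^c(u)>0$ forces $u^+\ne0$, i.e. $t>0$. Everything then reduces to controlling $2\|h^+\|_c^2$. Here I would exploit $\dim W=1$: write $h^+=\lambda w$ and $u^+=\alpha w$ with $|\alpha|=t=\sqrt{1-\|u^-\|_{L^2}^2}$. The tangency condition $\Re(u,h)_{L^2}=0$, together with transversality to the phase direction $iu$ (along which the Hessian vanishes identically by the $U(1)$-invariance of $\mathcal{I}^c$, so it must be factored out for a strict statement), yields $\alpha\bar\lambda=-(u^-,h^-)_{L^2}$ and hence $|\lambda|\le t^{-1}\|u^-\|_{L^2}\|h^-\|_{L^2}$. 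Combining this with $mc^2\|h^-\|_{L^2}^2\le\|h^-\|_c^2$ and $\|w\|_c^2\le mc^2\|w\|_{H^{1/2}}^2$ gives
$$\|h^+\|_c^2\le \frac{\|w\|_{H^{1/2}}^2}{t^2}\,\|u^-\|_{L^2}^2\,\|h^-\|_c^2.$$

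It then remains to show the coefficient $t^{-2}\|w\|_{H^{1/2}}^2\|u^-\|_{L^2}^2$ is $<1$ for $c$ large, which is the crux. Testing the first-order condition with $u^+$ gives $\|u^+\|_c^2=\omega(u)t^2+\Re\!\int\Gamma*(K|u|^\kappa)K|u|^{\kappa-2}(u,u^+)\,dx+\Re\!\int P|u|^{s-2}(u,u^+)\,dx$, and since $\omega(u)<mc^2$ and the two nonlinear integrals are bounded via Lemma \ref{lemm:2.5} and Corollary \ref{lemm:2.3} through the uniform $H^{1/2}$-bound on $u$, one obtains $\|u^+\|_c^2\lesssim mc^2$, whence $\|w\|_{H^{1/2}}^2=\|u^+\|_c^2/(t^2mc^2)\lesssim1$. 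Testing instead with $u^-$ and using $\|u^-\|_{H^{1/2}}\le c^{-1/2}\|u^-\|_c$ yields $\|u^-\|_c\lesssim c^{-1/2}$, so $\|u^-\|_{L^2}^2\le\|u^-\|_c^2/(mc^2)\lesssim c^{-3}\to0$. Consequently the coefficient tends to $0$, so $2\|h^+\|_c^2<2\|h^-\|_c^2$ for $h^-\ne0$, and the full constrained Hessian is strictly negative on $T_u(S_W)$ modulo the phase line, establishing the strict local maximum. I expect the main obstacle to be precisely these a priori estimates — the uniform $H^{1/2}$-bound on $u$ and the resulting smallness of $\|u^-\|_{L^2}$ and boundedness of $\|w\|_{H^{1/2}}$ — rather than the (routine) sign computations, together with the bookkeeping needed to quotient out the $U(1)$ phase degeneracy.
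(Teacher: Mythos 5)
The paper itself only sketches this lemma (it defers to Corollary \ref{lemm:2.3}, Lemma \ref{lemm:2.4} and the argument of \cite[Proposition 3.2]{MR4327893}), so the comparison has to be with that intended mechanism: one substitutes the first--order equation, tested with $w$, namely $\|w\|_c^2=\omega(u)+\tfrac{1}{2t}\bigl(\tfrac1\kappa dG(u)[w]+\tfrac2s dN(u)[w]\bigr)$, into $2\|h^+\|_c^2=2|\lambda|^2\|w\|_c^2$, so that the potentially huge quantity $\|w\|_c^2$ is cancelled exactly by the multiplier term $-2\omega(u)|\lambda|^2$, and what survives is a first--derivative nonlinear term of size $|\lambda|^2$ that is then beaten by $-2\|h^-\|_c^2$, $-2\omega(u)\|h^-\|_{L^2}^2$ and the convexity of the nonlinearities, with the error terms controlled by Corollary \ref{lemm:2.3} and Lemma \ref{lemm:2.4} for $c$ large. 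Your proposal bypasses this cancellation, and that is where it breaks.

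Concretely, your plan is to dominate $2\|h^+\|_c^2$ by $2\|h^-\|_c^2$ using $\|h^+\|_c^2\le t^{-2}\|w\|_{H^{1/2}}^2\|u^-\|_{L^2}^2\|h^-\|_c^2$ and smallness of the coefficient. Two problems. First, the lemma is quantified over \emph{every} critical point of $\mathcal I^c$ on $S_W$ at a positive level, for \emph{every} complex line $W=\operatorname{span}\{w\}$; it is exactly in this generality that it is used in Proposition \ref{prop:3.2} to get uniqueness of the maximizer for each $w\in S^+$. For such $u$ there is no a priori bound $\omega(u)<mc^2$ (that bound is derived later, only along the minimizing sequence via $e_c<mc^2-C$), and $\|w\|_{c}^2$ --- hence the level, by Lemma \ref{lemm:3.2} --- is unbounded over $w\in S^+$; your step ``$\|w\|_{H^{1/2}}^2=\|u^+\|_c^2/(t^2mc^2)\lesssim1$'' also uses $\|w\|_c^2\le mc^2\|w\|_{H^{1/2}}^2$ in the wrong direction (it yields a lower bound for $\|w\|_{H^{1/2}}^2$, not an upper bound). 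So the coefficient you need to be $<1$ is not shown to be small, and in the regime where $\|w\|_c$ is large it cannot be: the only way to kill $2|\lambda|^2\|w\|_c^2$ is the exact cancellation against $-2\omega(u)|\lambda|^2$ through the first--order equation, which your argument discards by keeping only ``$-2\omega(u)\|h\|_{L^2}^2\le0$''. Second, some smaller gaps: the tangency condition is a single real equation and fixes only $\Re(t\bar\lambda)$, so the complex identity $t\lambda=-(u^-,h^-)_{L^2}$ requires additionally $\Re(iu,h)_{L^2}=0$; you correctly propose to split off the phase direction $iu$, but you must then check that $iu$ lies in the radical of the constrained Hessian (true, by differentiating the gauge identity $d\mathcal I^c(v)[iv]=0$ and using the critical point equation --- otherwise the cross term $d^2\mathcal I^c(u)[iu,\tilde h]-2\omega(u)\Re(iu,\tilde h)_{L^2}$ could spoil negative semidefiniteness). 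Finally, $\int(\Gamma*\phi')\phi'\ge0$ requires $\Gamma$ to be positive definite as a convolution kernel, which $(\Gamma_1)$ does not assert (it does hold for the model case $\Gamma=|x|^{-\tau}$). The sign computations for $d^2N$ and the reduction to controlling $\|h^+\|_c^2$ are fine, but as it stands the crux of the proof is missing.
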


For any $w \in S^+$, we consider the following maximization problem
\begin{align}\tag{3.1}\label{111}
  \lambda_W=\sup _{{u} \in   S_W} \mathcal{I}^c({u}) .
\end{align}

We have the following estimates on $ \lambda_W$.
\begin{lemma}\label{lemm:3.2}
  For any $w \in S^+$, we have
  $$
c-C \leq (c-C )\| w \|_{H^{1/2}}^2\leq  \lambda_W  \leq \|w\|_{ c}^2 .
  $$

\end{lemma}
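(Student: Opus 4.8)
The plan is to establish the three inequalities separately, the outer two being essentially immediate and the middle lower bound on $\lambda_W$ carrying the real content.

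For the \emph{upper bound} $\lambda_W \le \|w\|_c^2$, I would take any $u \in S_W$ and decompose $u = u^+ + u^-$ with $u^+ \in W$, so that $u^+ = t w$ for some scalar $t$. Since $E_c^+$ and $E_c^-$ are $L^2$-orthogonal, $\|u\|_{L^2}^2 = |t|^2 + \|u^-\|_{L^2}^2 = 1$, whence $|t| \le 1$ and $\|u^+\|_c^2 = |t|^2\|w\|_c^2 \le \|w\|_c^2$. Because $\Gamma$, $K$, $P$ are all positive by $(\Gamma_1)$, $(K_1)$, $(P_1)$, the two integral terms in $\mathcal{I}^c$ are nonnegative, so
$$\mathcal{I}^c(u) \le \|u^+\|_c^2 - \|u^-\|_c^2 \le \|u^+\|_c^2 \le \|w\|_c^2.$$
Taking the supremum over $u \in S_W$ gives $\lambda_W \le \|w\|_c^2$. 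For the \emph{first inequality}, I would simply note that the definition of the $H^{1/2}$-inner product yields $\|w\|_{H^{1/2}}^2 = \int_{\mathbb{R}^3}\sqrt{|\xi|^2+1}\,|\hat w(\xi)|^2 d\xi \ge \int_{\mathbb{R}^3}|\hat w|^2 d\xi = \|w\|_{L^2}^2 = 1$, so that $(c-C)\|w\|_{H^{1/2}}^2 \ge c - C$ whenever $c$ is large enough that $c - C \ge 0$.

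The \emph{main step} is the lower bound $(c-C)\|w\|_{H^{1/2}}^2 \le \lambda_W$, which I would obtain by testing with $w$ itself: since $w \in S^+ \subset E_c^+$ with $\|w\|_{L^2}=1$, one has $w \in S_W$ (with $w^- = 0$), hence $\lambda_W \ge \mathcal{I}^c(w) = \|w\|_c^2 - \tfrac{1}{\kappa}\int_{\mathbb{R}^3}\Gamma*(K|w|^\kappa)K|w|^\kappa\,dx - \tfrac{2}{s}\int_{\mathbb{R}^3}P|w|^s\,dx$. For the quadratic part I use $\|w\|_c^2 \ge c\|w\|_{H^{1/2}}^2$, valid for $c$ large as recorded at the start of Section 3. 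For the nonlocal term, Lemma \ref{lemm:2.4} together with $\|w\|_{L^2}=1$ gives $\int_{\mathbb{R}^3}\Gamma*(K|w|^\kappa)K|w|^\kappa\,dx \le C\|(-\Delta)^{1/4}w\|_{L^2}^2 \le C\|w\|_{H^{1/2}}^2$. For the local term, Corollary \ref{lemm:2.3} gives $\int_{\mathbb{R}^3}P|w|^s\,dx \le C\|(-\Delta)^{1/4}w\|_{L^2}^{3s-6}$; since $s \le 8/3$ the exponent satisfies $3s-6 \le 2$, so writing $\theta := (3s-6)/2 \in (0,1]$ and $A := \|(-\Delta)^{1/4}w\|_{L^2}^2$ the elementary bound $A^\theta \le 1 + A$ yields $\int_{\mathbb{R}^3}P|w|^s\,dx \le C(1+\|(-\Delta)^{1/4}w\|_{L^2}^2) = C\|w\|_{H^{1/2}}^2$. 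Combining the three estimates gives $\mathcal{I}^c(w) \ge c\|w\|_{H^{1/2}}^2 - C\|w\|_{H^{1/2}}^2 = (c-C)\|w\|_{H^{1/2}}^2$, and the lower bound follows.

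The only delicate point, and the single place where the structural hypothesis $s \in (2,8/3]$ is used, is ensuring that the local nonlinearity is dominated by the quadratic quantity $\|w\|_{H^{1/2}}^2$ with a constant $C$ \emph{independent of both $c$ and $w$}: the inequality $3s-6 \le 2$ is exactly what keeps the Gagliardo--Nirenberg exponent subcritical relative to $\|w\|_{H^{1/2}}^2$, so that no surviving extra power of the gradient norm can spoil the comparison with the leading term $c\|w\|_{H^{1/2}}^2$. Once the uniformity of all constants is verified, the three displayed inequalities assemble directly.
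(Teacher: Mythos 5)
Your proposal is correct and follows essentially the same route as the paper: test with $w$ itself for the lower bound, control the nonlocal term by Lemma \ref{lemm:2.4} and the local term by Corollary \ref{lemm:2.3} together with the fact that $3s-6\le 2$ and $\|w\|_{H^{1/2}}\ge\|w\|_{L^2}=1$ (the paper absorbs $\|w\|_{H^{1/2}}^{3s-6}\le C\|w\|_{H^{1/2}}^2$ directly, which is the same device as your $A^\theta\le 1+A$), and use positivity of the nonlinearities plus $\|u^+\|_c\le\|w\|_c$ for the upper bound. No substantive difference.
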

\begin{proof}
Since $w\in S^+$, there exists $C>0$, such that $\|w\|_{H^{1/2}}^{3s-6}\leq C\|w\|_{H^{1/2}}^2$. Then by Corollary \ref{lemm:2.3} and Lemma \ref{lemm:2.4}, we have
\[ \int_{\mathbb{R}^3 }P |w|^sdx\leq  C  \|(-\Delta)^{1/4}w\|_{L^2}^{3s-6}\|w\|_{L^2}^{6-2s}\leq C\|w\|_{H^{1/2}}^{3s-6}\leq C\|w\|_{H^{1/2}}^{2},\]
and
\[\int_{\mathbb{R}^3 }\Gamma *(K|w|^\kappa)K|w|^\kappa  d x\leq C  \| (-\Delta)^{1/4}w\|_{L^2}^2\|{w}\|_{L^2}^{2\kappa-2}\leq C\|w\|_{H^{1/2}}^{2}.\]
Therefore, we get
  $$
  \begin{aligned}
\lambda_W \geq  \mathcal{I}^c(w)&=\|w\|^2- \frac{  1  }{\kappa} \int_{\mathbb{R}^3 }\Gamma * (K|w|^\kappa)K|w|^\kappa  d x-\frac{2 }{s} \int_{\mathbb{R}^3 }P |w|^sdx\\
  & \geq  \|w\|^2 - C \| w \|_{H^{1/2}}^2\\
  &\geq  (c-C )\| w \|_{H^{1/2}}^2\\
  &\geq     c-C .
  \end{aligned}
  $$
On the other hand, we have $\mathcal{I}^c({u}) \leq \| u^+ \| _{ c}^2 \leq \|w\|_{   c}^2$, for any ${u} \in   S_W$, that means $ \lambda_W  \leq \|w\|_{  c}^2$.
\end{proof}

We finish the maximization problem in view of the following proposition.
\begin{proposition}\label{prop:3.2}
For any $w \in S^+$,  there exists unique (up to a phase factor) $\varphi  (w) \in   S_W$, the strict global maximum of $\mathcal{I}^c$ on $  S_W$, namely
$$
\mathcal{I}^c\left(\varphi  (w)\right)=\sup _{{u} \in   S_W} \mathcal{I}^c({u})= \lambda_W.
$$
Moreover, we have
$$
d \mathcal{I}^c\left(\varphi  (w)\right)[h]-2 \omega\left(\varphi  (w)\right) \Re\left(\varphi  (w), h\right)_{L^2}=0, \quad \forall h \in W \oplus E_c^-,
$$
and
 the map $\varphi: v \in E_c^+ \backslash\{0\} \rightarrow \varphi (P(v))$, with $P(v)=v/\|v\|_{L^2}  \in S^+$, is smooth.
\end{proposition}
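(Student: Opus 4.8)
The plan is to read Proposition \ref{prop:3.2} as a single constrained maximization problem and to harvest every assertion (existence, the Euler--Lagrange identity, strict global maximality, uniqueness and smoothness) from it. First I would note that by Lemma \ref{lemm:3.2} the value $\lambda_W=\sup_{S_W}\mathcal{I}^c$ is finite and, for $c$ large, satisfies $\lambda_W\geq c-C>0$, so a maximizing sequence $\{u_n\}\subset S_W$ exists. Applying Ekeland's variational principle on the complete constraint manifold $S_W$, I would upgrade $\{u_n\}$ to a Palais--Smale sequence of $\mathcal{I}^c|_{S_W}$ at the level $\lambda_W>0$. Proposition \ref{prop:3.1}(iii) then yields, along a subsequence, strong convergence in $H^{1/2}$ to a limit which I call $\varphi(w)$; since the conditions $\|u\|_{L^2}=1$ and $u^+\in W$ are both preserved under strong convergence, $\varphi(w)\in S_W$ and $\mathcal{I}^c(\varphi(w))=\lambda_W$, so the supremum is attained.

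Because $\varphi(w)$ maximizes $\mathcal{I}^c$ on the manifold cut out of $W\oplus E_c^-$ by the constraint $\|u\|_{L^2}^2=1$, the Lagrange multiplier rule produces a number $\omega(\varphi(w))\in\mathbb{R}$ with
$$
d\mathcal{I}^c(\varphi(w))[h]-2\omega(\varphi(w))\Re(\varphi(w),h)_{L^2}=0,\qquad\forall\,h\in W\oplus E_c^-,
$$
which is precisely the stated Euler--Lagrange identity. Since $\varphi(w)$ is thus a critical point at the positive level $\lambda_W$, Lemma \ref{lemm:3.1} applies and shows directly that $\varphi(w)$ is a strict local maximum of $\mathcal{I}^c$ on $S_W$.

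The heart of the matter is uniqueness together with the promotion of ``strict local'' to ``strict global''. Here I would use the parametrization $u=tw+\xi$ with $\xi=u^-\in E_c^-$ and $t=\sqrt{1-\|\xi\|_{L^2}^2}$, which identifies $S_W$, modulo the phase action $u\mapsto e^{i\theta}u$ (under which both $\mathcal{I}^c$ and the constraint are invariant), with the convex ball $\{\xi\in E_c^-:\|\xi\|_{L^2}<1\}$. On this set the reduced functional $\Phi_w(\xi):=\mathcal{I}^c(tw+\xi)$ splits into a quadratic part whose second variation is bounded above by $-2\|h\|_c^2$, and a nonlinear remainder whose second variation is controlled by $C\|h\|_{H^{1/2}}^2\leq (C/c)\|h\|_c^2$ through Corollary \ref{lemm:2.3} and Lemmas \ref{lemm:2.4}--\ref{lemm:2.5}. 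For $c$ large the quadratic term therefore dominates and $\Phi_w$ is strictly concave on the region in which positive-level critical points must lie, so there is at most one such point; combined with the existence above this gives a unique strict global maximum $\varphi(w)$. I expect this to be the main obstacle: $\mathcal{I}^c$ is strongly indefinite and, under $(\Gamma_1)$ alone, the nonlocal term need not be convex, so concavity cannot be read off and must instead be recovered from the favourable scaling of $\|\cdot\|_c$ against $\|\cdot\|_{H^{1/2}}$, together with the care needed to keep the maximizer away from the degenerate boundary $t\to 0$.

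Finally, for the smoothness of $\varphi$ I would invoke the implicit function theorem for the map that sends $(w,u,\omega)$ to the pair consisting of the Euler--Lagrange residual projected onto $W\oplus E_c^-$ and the scalar $\|u\|_{L^2}^2-1$. The invertibility of its linearization at $(w,\varphi(w),\omega(\varphi(w)))$ is exactly the nondegeneracy encoded by the negative-definite constrained Hessian of the preceding step, while the regularity required to differentiate the residual is supplied by Lemma \ref{lemm:2.7} for the nonlocal operator and by the smoothness of the local term. This produces a smooth local branch $w\mapsto(\varphi(w),\omega(\varphi(w)))$ on $S^+$; precomposing with the smooth normalization $v\mapsto v/\|v\|_{L^2}$ then shows that $\varphi:E_c^+\setminus\{0\}\to S_W$ is smooth, completing the proof.
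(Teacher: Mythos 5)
Your proposal follows essentially the same route as the paper: Ekeland's variational principle produces a Palais--Smale maximizing sequence at the positive level $\lambda_W$, Proposition \ref{prop:3.1} gives compactness and hence a maximizer satisfying the Euler--Lagrange identity, and Lemma \ref{lemm:3.1} gives strict local maximality. The only difference is that the paper outsources uniqueness and smoothness to the argument of \cite[Proposition 3.6]{MR4327893}, whereas you sketch those steps explicitly (strict concavity of the reduced functional on the ball in $E_c^-$ for large $c$, then the implicit function theorem using the nondegenerate constrained Hessian), which is consistent with what that reference does.
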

\begin{proof}
  By Lemma \ref{lemm:3.2}, $\lambda_W>0$,  Ekeland's variational principle yields there exists a Palais-Smale
  maximizing sequence $u_n$ of $\mathcal{I}^c$ on $S_W$ at a positive level. Hence $\|u_n-u\|_c^2\to 0$ in $E_c$, and $\omega(u_n)\to \omega(u)>0$, hence
  $$
  \mathcal{I}^c(u)=\sup _{{u} \in   S_W} \mathcal{I}^c({u}),
  $$
  and $\| d I  |_{S_W} \left({u}\right)\|=0.$
  In view of a similar argument in  \cite[Proposition 3.6]{MR4327893}, we get
  the uniqueness and  smoothness of $\varphi$ .
\end{proof}

\subsection{Minimization problem}
\quad We consider the smooth functionals $\mathcal{E}_c  :E_c^+ \backslash\{0\} \rightarrow \mathbb{R}$,  given by
$$
\mathcal{E}_c  (v)=\mathcal{I}^c\left(\varphi  (P(v))\right)=\sup _{{u} \in   S_W} \mathcal{I}^c({u}),
$$
where $W=\operatorname{span}\{w\}$, with $w=P(v) \in S^+$.
Consider the following  minimization problem
$$
e_c=\inf _{\substack{W \subset E_c^+ \\ \operatorname{dim} W=1}} \sup _{{u} \in   S_W} \mathcal{I}^c({u})=\inf _{w \in S^+} \mathcal{E}_c  (w),
$$
we can find that $e_c$ is bounded by a constant. This is the key idea to show the existence results of \ref{eq:Dirac}.
\begin{lemma}\label{lemm:3.3}
 For a give $c>0$ large enough, we have $ e_c\in \left(0,mc^2-C \right),$  where $C $ is a constant independent of $c$.
\end{lemma}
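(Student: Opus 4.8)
The plan is to establish the two bounds separately: positivity $e_c>0$ follows almost immediately from the lower estimate already recorded in Lemma~\ref{lemm:3.2}, whereas the upper bound $e_c<mc^2-C$ is the substantive part and is obtained by testing with a single, well-chosen positive-energy state together with the concentration scaling encoded in $(P_2)$.

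For the lower bound, recall that $e_c=\inf_{w\in S^+}\mathcal{E}_c(w)=\inf_{w\in S^+}\lambda_W$. Since every $w\in S^+$ satisfies $\|w\|_{H^{1/2}}^2\geq\|w\|_{L^2}^2=1$, Lemma~\ref{lemm:3.2} gives $\lambda_W\geq(c-C)\|w\|_{H^{1/2}}^2\geq c-C$, hence $e_c\geq c-C>0$ once $c$ is large. This also shows the interval is nonempty, since $c-C<mc^2-C$ for $c$ large.

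For the upper bound I would take $w=\mathbf{U}_{\mathrm{FW}}^{-1}(v,0)^T$ with $v\in H^1(\mathbb{R}^3,\mathbb{C}^2)$ and $\|v\|_{L^2}=1$, so that $w\in S^+$ is purely positive-energy, and control $\mathcal{E}_c(w)=\lambda_W=\sup_{u\in S_W}\mathcal{I}^c(u)$ from above by two ingredients. First, diagonalizing with the Foldy--Wouthuysen transform and using $\sqrt{m^2c^4+c^2|\xi|^2}\leq mc^2+|\xi|^2/(2m)$ yields $\|w\|_c^2=\int_{\mathbb{R}^3}\sqrt{m^2c^4+c^2|\xi|^2}\,|\hat v|^2\,d\xi\leq mc^2+\frac{1}{2m}\|\nabla v\|_{L^2}^2$. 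Second, for any $u=tw+u^-\in S_W$ I would drop the nonnegative nonlocal term, write $\|u^+\|_c^2=t^2\|w\|_c^2=\|w\|_c^2-\|u^-\|_{L^2}^2\|w\|_c^2$, and invoke the second inequality of Lemma~\ref{lemm:2.6} to bound the local term below by $C\int P|v|^s$ minus error terms of the form $\|\nabla v\|_{L^2}^s$, $\|u^-\|_{L^2}^2\|w\|_{H^{1/2}}^2$ and $\|u^-\|_{H^{1/2}}^2$. By the norm equivalence $c\|\cdot\|_{H^{1/2}}^2\leq\|\cdot\|_c^2\leq mc^2\|\cdot\|_{H^{1/2}}^2$, the two $u^-$ error terms are $\leq\frac{C}{c}\|u^-\|_c^2$, which for $c$ large are absorbed by the negative part $-\|u^-\|_c^2-\|u^-\|_{L^2}^2\|w\|_c^2\leq0$. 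Taking the supremum over $u$ then gives, uniformly in $c$,
\[
\mathcal{E}_c(w)\leq mc^2+\frac{1}{2m}\|\nabla v\|_{L^2}^2+C\|\nabla v\|_{L^2}^s-C\int_{\mathbb{R}^3}P|v|^s\,dx.
\]

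Finally I would insert the dilation $v_\varepsilon(x)=\varepsilon^{3/2}v(\varepsilon x)$, which keeps $\|v_\varepsilon\|_{L^2}=1$ and gives $\|\nabla v_\varepsilon\|_{L^2}^2=\varepsilon^2\|\nabla v\|_{L^2}^2$, while $(P_2)$ (applied in the form $P(y/\varepsilon)\geq C\varepsilon^\mu P(y)$) yields $\int P|v_\varepsilon|^s\,dx\geq C\varepsilon^{3s/2-3+\mu}\int P|v|^s\,dx$. The condition $\mu<\frac{10-3s}{2}$ is exactly $3s/2-3+\mu<2$, and since $s>2$ it also forces $3s/2-3+\mu<s$; hence for a fixed small $\varepsilon_0$ the term $-C\varepsilon_0^{3s/2-3+\mu}\int P|v|^s$ dominates both kinetic contributions of orders $\varepsilon_0^2$ and $\varepsilon_0^s$, making the bracket strictly negative by an amount $C_0>0$ independent of $c$. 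Choosing $w=w_{\varepsilon_0}$ then gives $e_c\leq\mathcal{E}_c(w_{\varepsilon_0})\leq mc^2-C_0$. The main obstacle is the bookkeeping of the middle step: one must verify that every error term produced by Lemma~\ref{lemm:2.6} is genuinely absorbed by the negative quadratic part for large $c$, and that the scaling exponents align so that the single surviving negative term retains a $c$-independent constant — which is precisely why the sharp range of $\mu$ in $(P_2)$ is needed.
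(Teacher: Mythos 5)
Your proposal is correct and follows essentially the same route as the paper: positivity from the lower bound in Lemma \ref{lemm:3.2}, and the upper bound via the Foldy--Wouthuysen test state $w=\mathbf{U}_{\mathrm{FW}}^{-1}(v,0)^T$, the estimate $\|w\|_c^2\leq mc^2+\frac{1}{2m}\|\nabla v\|_{L^2}^2$, Lemma \ref{lemm:2.6} with absorption of the $u^-$ error terms for large $c$, and the dilation $v_\varepsilon$ combined with $(P_2)$, where $\mu<\frac{10-3s}{2}$ is exactly the exponent comparison $3s/2-3+\mu<2$. No gaps.
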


\begin{proof}
 We consider $w=\mathbf{U}_{\mathrm{FW}}^{-1}\left(\begin{array}{l}v \\ 0\end{array}\right) \in S^+$, with $v \in H^1\left(\mathbb{R}^3, \mathbb{C}^2\right)$ and $\|v\|_{L^2}^2=1$.  Since $\|w\|_{   H^{1/2}}^2=\|v\|_{   H^{1/2}}^2$ and
  $$0 \leq \|w\|_c^2-mc^2\|w\|_{L^2}^2
  =\int _{\mathbb{R}^3}\frac{c^2|\xi|^2}{\sqrt{c^2|\xi|^2+ m^2c^4 }+mc^2}|\hat{w}|^2(\xi) d\xi \leq \frac{1}{2m}\|\nabla v\|_{L^2}^2.$$

 By Lemma \ref{lemm:2.6},  we have for any ${u} \in S_W$
 $$
 \begin{aligned}
 \mathcal{I}^c({u})=&\| u^+ \|_c^2-\| u^- \|_c ^2-\frac{  1}{\kappa}\int_{\mathbb{R}^3 }\Gamma * (K|{u}|^\kappa)K|{u}|^\kappa  \ d x -\frac{2 }{s}\int_{\mathbb{R}^3 }P |{u}|^sdx\\
 \leq & \left(1-\|u^-\|_c^2\right)\|w\|_c^2-c\|u^-\|^2_{H^{1/2}}-  C  \int_{\mathbb{R}^3 } P |v|^sdx+C  \|\nabla v\|_{L^2}^s\\
 &+C   \|u^- \|_{L^2}^2\|w\|_{H^{1/2}}^2+ C  \|u^- \|_{H^{1/2}}^2\\
\leq & mc^2+\frac{1}{2m}\|\nabla v\|_{L^2}^2-(c-   C )\|u^- \|_{H^{1/2}}^2-(c-   C  )\| u^- \| _{L^2}^2\|w\|_{   H^{1/2}}^2    \\
  &  + C  \|\nabla v\|_{L^2}^s-  C \int_{\mathbb{R}^3 }P |v|^sdx.
 \end{aligned}
 $$
 Now, for any $\varepsilon>0$, we define $v_\varepsilon(x)=\varepsilon^{3 / 2} v(\varepsilon x)$, $w_\varepsilon=\mathbf{U}_{\mathrm{FW}}^{-1}\left(\begin{array}{c}v_\varepsilon \\ 0\end{array}\right) \in S^+$, and $W_\varepsilon=\operatorname{span} \{ w_\varepsilon\}$.
 Then by $(P_2)$, we have
 $$
e_c-mc^2\leq  \sup _{{u} \in S_{W_\varepsilon}} \mathcal{I}^c({u})-mc^2\lesssim \varepsilon^2\|\nabla v\|_{L^2}^2+    \varepsilon^s\|\nabla v\|_{L^2}^s- \varepsilon^{3s/2+\mu-3} \int_{\mathbb{R}^3 }P(x) |v|^sdx.
 $$
 Hence, by choosing proper $v$ and taking $\varepsilon>0$ sufficiently small, we may conclude that $e_c-mc^2<-C <0$.
 \end{proof}

\medskip
\noindent[\textbf{Proof of Theorem 1.1}]
  \textbf{Existence:}  By Ekeland's variational principle, there exists a   minimizing sequence $\left\{w_n\right\} \subset S^+$, namely $\mathcal{E}_c\left(w_n\right)=\mathcal{I}^c\left(\varphi\left(w_n\right)\right) \rightarrow e_c$ and $\| d \mathcal{E}_c\left(w_n\right)\|  \rightarrow 0$. Set ${u}_n:=\varphi\left(w_n\right)$, then we have
$$
\sup _{\|h\|_{   H^{1/2}}=1}\left|d \mathcal{I}^c\left({u}_n\right)[h]-2 \omega\left({u}_n\right) \Re\left({u}_n , h\right)_{L^2}\right| \rightarrow 0 .
$$
One can get the boundedness of $\omega({u}_n)$, we have that $\omega\left({u}_n\right) \rightarrow \omega_c<mc^2 $ (up to a subsequence). Since $$\left\|u_n^+\right\|_c^2-\left\|u_n^-\right\|_c^2\geq \mathcal{I}^c(u_n)>0,$$
we have
 $$\| u_n^-  \|_c ^2 \leq\|u_n^+\|_c^2\leq
 \| w_n\|_c ^2 \quad \text{and}\quad \| u_n^-  \|_{H^{1/2}} ^2 \leq mc
 \| w_n\|_{H^{1/2}} ^2.$$
Then by Lemma \ref{lemm:3.2},   we have 
$$
1=\| {u}_n\| _{L^2}^2 \leq \| {u}_n\| _{   H^{1/2}}^2 \leq  (c+1)\| w_n\| _{   H^{1/2}}^2 \leq \frac{c+1}{c-C    }(e_c+o(1)) .
$$ 
Therefore, $\left\{{u}_n\right\}$ is a Palais-Smale sequence for the functional
$$
\mathcal{I}^c_\omega({u})=\mathcal{I}^c({u})-\omega_c\|{u}\|_{L^2}^2,
$$
satisfying
$$
0<\inf _n\| {u}_n\| _{   H^{1/2}}^2 \leq \sup _n\| {u}_n\| _{   H^{1/2}}^2<+\infty .
$$
We may assume $u_n\rightharpoonup u$ in $E_c$, and $u_n\to u$ in $L^p_{loc}$, where $p\in (1,3)$. 
Since
\[\mathcal{I}^c(u_n)-\frac{1}{2}d\mathcal{I}^c (u_n)[u_n]\geq 0,\]
we have $\omega_c\leq e_c<mc^2-C$.
 Then
$$
\begin{aligned}
  \left\|u_n^{+}-u^{+}\right\|_c^2=&\left(d\mathcal{I}^c \left(u_n\right)-d\mathcal{I}^c(u), u_n^{+}-u^{+}\right)-\int_{\mathbb{R}^3}\left(P \left|u_n\right|^{s-2} u_n-P |u|^{s-2} u, u_n^{+}-u^{+}\right) dx\\
  -&\int_{\mathbb{R}^3}\left(\Gamma *(K|u_n|^\kappa)K|u_n|^{\kappa-2}u_n-\Gamma *(K|u|)K|u|^{\kappa-2}u, u_n^{+}-u^{+}\right) d x+\omega_c\|u_n^{+}-u^{+}\|_{L^2}^2\\
  \leq &\omega_c\|u_n^{+}-u^{+}\|_{L^2}^2+o(1).
\end{aligned}
$$
By Lemma \ref{lemm:3.3}, we have
$\left\|u_n^{+}-u^{+}\right\|_c^2=o(1)$ . Similarly, we have  $\left\|u_n^{-}-u^{-}\right\|_c^2=o(1)$, which means
$$\left\|u_n-u\right\|_c\rightarrow 0, \ \text{as}\ n\rightarrow \infty.$$
This implies $\|u\|_{L^2} =1$. Thus, we have $(u,\omega_c)\in E\times (0,mc^2)$ is a (weak) normalized solution of \ref{eq:Dirac}.

\medskip
\textbf{Regularity:}
We use the bootstrap argument to obtain the regularity of solutions of \ref{eq:Dirac}. Set $$ u _1=(\mathscr{D}_c-\omega_c)^{-1}\left(\Gamma *(K|u |^\kappa)K|u |^{\kappa-2}u \right), \quad  u _2=(\mathscr{D}_c-\omega_c)^{-1}\left(P| u |^{s-2} u\right), $$
then by Lemma \ref{lemm:2.7}, we have $ u _1\in W^{1,p}\left(\mathbb{R}^3,\mathbb{C}^4\right)$ for $p\in [2,3]$, and $u _2\in W^{1,\frac{3}{s-1}}\left(\mathbb{R}^3,\mathbb{C}^4\right)$. Hence by the Sobolev embedding theorem, we have $$ u _1\in \bigcap_{p\geq  2}L^p \left(\mathbb{R}^3,\mathbb{C}^4\right)\quad \text{and}\quad u _2\in L^{\frac{3}{s-2}} \left(\mathbb{R}^3,\mathbb{C}^4\right).$$ Then we have  $ u \in  L^{\frac{3}{s-2}}\left(\mathbb{R}^3,\mathbb{C}^4\right)$,
by $(K_1)$, $(\Gamma_1)$ and the Young inequality, we have $$ u _1\in W^{1,p}\left(\mathbb{R}^3,\mathbb{C}^4\right),\quad  u _2\in W^{1,q}\left(\mathbb{R}^3,\mathbb{C}^4\right),$$ where $p=\frac{3}{(4-3\kappa)+(2\kappa-1)(s-2)}>3$, $q=\frac{3}{(s-1)(s-2)}$.
Set $q_0=\frac{3}{s-2}$, and
$$
 q_n=\frac{3q_{n-1}}{(s-1)(3-q_n)},
$$
then $ u _2\in W^{1, q_n}\left(\mathbb{R}^3,\mathbb{C}^4\right)$, one can prove there exist $q_n > 3$, thus we  get $ u _2\in W^{1, 3}\left(\mathbb{R}^3,\mathbb{C}^4\right)$. Then by the Sobolev embedding theorem, we have  $ u _2\in L^p\left(\mathbb{R}^3,\mathbb{C}^4\right)$, for any $p\geq 2$.
Consequently, we get $ u \in L^p\left(\mathbb{R}^3,\mathbb{C}^4\right),$ which implies $ u \in W^{1,p}\left(\mathbb{R}^3,\mathbb{C}^4\right)$, for any $p\geq 2$.

\medskip
We now turn to prove the boundedness of $\omega_c - mc^2$ as 
$c\to \infty. $
 Let $\{u_c=( f_c, g_c)\}$ be a critical point of the functional  $\mathcal{I}^c$ on $S$ with Lagrange multiplier $\omega_c$,
 where
$f_c, g_c\in H^{1/2}(\mathbb{R}^3,\mathbb{C}^2)$.
\begin{lemma}\label{lemm:4.1}
  $\{u_c\}$ is uniformly bounded in  $H^1\left(\mathbb{R}^3,\mathbb{C}^4\right)$ with respect to $c$.
\end{lemma}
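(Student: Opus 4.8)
The plan is to exploit the second-order structure of the Dirac operator rather than the first-order system for $(f_c,g_c)$. Since $u_c\in H^1(\mathbb{R}^3,\mathbb{C}^4)$ by the regularity established above, $u_c$ solves $\mathscr{D}_c u_c=\omega_c u_c+F(u_c)$ in $L^2$, where I abbreviate the full nonlinearity as $F(u_c)=N(u_c)u_c$ with $N(u_c)=\Gamma*(K|u_c|^\kappa)K|u_c|^{\kappa-2}+P|u_c|^{s-2}$. Recalling from the Foldy--Wouthuysen computation of Section 2 that $\mathbf{U}_{\mathrm{FW}}\mathscr{D}_c\mathbf{U}_{\mathrm{FW}}^{-1}=\beta|\mathscr{D}_c|$ with $|\mathscr{D}_c|=\sqrt{-c^2\Delta+m^2c^4}$, and hence $\mathscr{D}_c^2=(-c^2\Delta+m^2c^4)I_4$, I would take $L^2$-norms in the equation and use $\|u_c\|_{L^2}=1$ together with $\Re(u_c,F(u_c))_{L^2}=\int_{\mathbb{R}^3}N(u_c)|u_c|^2=:G+Q$, where $G=\int_{\mathbb{R}^3}\Gamma*(K|u_c|^\kappa)K|u_c|^\kappa$ and $Q=\int_{\mathbb{R}^3}P|u_c|^s$, to obtain the key identity
\[
c^2\|\nabla u_c\|_{L^2}^2+m^2c^4=\|\mathscr{D}_c u_c\|_{L^2}^2=\omega_c^2+2\omega_c(G+Q)+\|F(u_c)\|_{L^2}^2 .
\]

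Since $0<\omega_c<mc^2$, the scalar $\omega_c^2-m^2c^4$ is negative and may simply be discarded, while $2\omega_c\le 2mc^2$. Writing $X:=\|\nabla u_c\|_{L^2}^2$, this leaves the clean inequality $X\le 2m(G+Q)+c^{-2}\|F(u_c)\|_{L^2}^2$. The whole argument then reduces to bounding each term on the right by a power of $X$ that is strictly below $1$ for the undivided terms, and at most $1$ for the terms carrying the saving factor $c^{-2}$. For the undivided part, Lemma~\ref{lemm:2.4} and Corollary~\ref{lemm:2.3}, combined with $\|u_c\|_{L^2}=1$ and the interpolation $\|(-\Delta)^{1/4}u_c\|_{L^2}^2\le\|\nabla u_c\|_{L^2}\|u_c\|_{L^2}=X^{1/2}$, yield $G\lesssim X^{1/2}$ and $Q\lesssim X^{(3s-6)/4}$, whose exponents lie in $(0,1/2]$ because $s\in(2,8/3]$.

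The delicate point is the quadratic nonlinearity $\|F(u_c)\|_{L^2}^2=\int_{\mathbb{R}^3}N(u_c)^2|u_c|^2$. For the local part I would apply the Gagliardo--Nirenberg inequality (Lemma~\ref{lemm:2.2}) to get $\|P|u_c|^{s-1}\|_{L^2}^2\lesssim\|u_c\|_{L^{2s-2}}^{2s-2}\lesssim X^{3(s-2)/2}$, an exponent that is $\le 1$ \emph{precisely} because $s\le 8/3$. For the nonlocal part $\|\Gamma*(K|u_c|^\kappa)K|u_c|^{\kappa-1}\|_{L^2}^2$ I would combine the weak Young inequality (Lemma~\ref{lemm:2.1}), Hölder's inequality and Lemma~\ref{lemm:2.2} in the spirit of Lemmas~\ref{lemm:2.4}--\ref{lemm:2.5}. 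A scaling count based on $(\Gamma_1)$ (with $\Gamma$ of critical decay $|x|^{-\tau}$, $\tau=7-3\kappa$, lying in $L_w^{6/(14-6\kappa)}$) shows that this quantity is homogeneous of degree $3\kappa-6+\tau$ in $X$; this equals $1$ at the borderline decay and is $<1$ otherwise, and the constraint $\kappa\in[2,7/3)$ both keeps $14-6\kappa>0$ and makes the borderline exponent $\tau=7-3\kappa$ positive.

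Once all four terms are controlled, for $c$ large the factor $c^{-2}$ renders the two potentially degree-$1$ contributions harmless, and Young's inequality absorbs every subunit power, producing $X\le\tfrac12 X+C$ with $C$ independent of $c$. Hence $X=\|\nabla u_c\|_{L^2}^2$ is uniformly bounded, and since $\|u_c\|_{L^2}=1$ this gives the uniform $H^1$ bound. I expect the main obstacle to be the bookkeeping for $\|F(u_c)\|_{L^2}^2$, in particular verifying that the nonlocal term's homogeneity in $X$ does not exceed $1$: this is exactly where the ranges $\kappa\in[2,7/3)$, $s\in(2,8/3]$ and the weak-$L^p$ assumption $(\Gamma_1)$ on $\Gamma$ are used, and where the saving factor $c^{-2}$ is indispensable at the endpoint exponents. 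Note also that no bound on $mc^2-\omega_c$ is needed here, since the unfavorable term $\omega_c^2-m^2c^4$ has the right sign and is discarded.
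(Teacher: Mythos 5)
Your proposal is correct and follows essentially the same route as the paper: take the $L^2$-norm of the equation, use $\|\mathscr{D}_c u_c\|_{L^2}^2=m^2c^4+c^2\|\nabla u_c\|_{L^2}^2$, cancel $m^2c^4$ against $\omega_c^2\le m^2c^4$ (the paper's ``incorporating $\omega_c u_c$ into $mc^2u_c$''), and bound the surviving cross and quadratic terms by powers of $\|\nabla u_c\|_{L^2}$ that are at most $2$, with the degree-$2$ contributions carrying the $c^{-2}$ saving, before absorbing. Your exponent bookkeeping ($X^{1/2}$, $X^{(3s-6)/4}$, $X^{3(s-2)/2}$, and degree $3\kappa-6+\tau\le 1$ for the nonlocal quadratic term) matches the paper's displayed inequality; the only step you leave as a scaling heuristic, the $L^2$ bound on $\Gamma*(K|u_c|^\kappa)K|u_c|^{\kappa-1}$, is likewise asserted without detail in the paper.
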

\begin{proof}
 We have  proved $u_c\in H^1\left(\mathbb{R}^3,\mathbb{C}^4\right)$. According to Lemma \ref{lemm:2.1} and  Lemma \ref{lemm:2.2}, it follows that
  $$
\begin{aligned}
  \|\mathscr{D}_cu_c\|_{L^2}^2=& m^2c^4\|u_c\|_{L^2}^2+c^2\|\nabla u_c\|_{L^2}^2=  \left\|  (\Gamma * K| u_c |^\kappa) K| u_c |^{\kappa-2} u_c +   P | u_c |^{s-2} u_c +\omega_c  u_c   \right\|_{L^2}^2 \\
\leq&  \| (\Gamma * K| u_c |^\kappa) K| u_c |^{\kappa-2} u_c\|_{L^2}^2+\| P | u_c |^{s-2} u_c \|_{L^2}^2+\| \omega_c  u_c \|_{L^2}^2\\
&+ \langle (\Gamma * K| u_c |^\kappa) K| u_c |^{\kappa-2} u_c,   P | u_c |^{s-2} u_c \rangle_{L^2}\\&+\langle (\Gamma * K| u_c |^\kappa) K| u_c |^{\kappa-2} u_c, \omega_c  u_c \rangle_{L^2}+\langle  P | u_c |^{s-2} u_c ,\omega_c  u_c \rangle_{L^2}\\
\leq& m^2c^4\|u_c\|_{L^2}^2+\mathcal{C}(\|\nabla u_c\|_{L^2}^{3s-6}+\|\nabla u_c\|_{L^2}^{2}+ mc^2\|\nabla u_c\|_{L^2}^{\frac{3s}{2}-3}+mc^2\|\nabla u_c\|_{L^2}+\|\nabla u_c\|_{L^2}^{\frac{s}{2}})
\end{aligned}
 $$
 Hence $\|\nabla u_c\|_{L^2}^2$ is bounded uniformly, which implies $\{u_c\}$ is bounded in  $H^1$ uniformly.
\end{proof}
 It is worth to mention that the expansion of the inner product plays a crucial role in this context because the term $\omega_c u_c$ can be incorporated into the expression $mc^2u_c$. From Lemma \ref{lemm:3.3}, we get $$\limsup\limits_{c\to \infty} (\omega_c-mc^2)< 0.$$On the other hand, $d \mathcal{I}^c  |_{S}(u_c)[u_c^+]=0$ yields
  $$
  \|u_c^+\|_c^2-\int_{\mathbb{R}^3 } \Gamma *(K|{u_c}|^\kappa)K|{u_c}|^{\kappa-2}\Re(u_c,u_c^+)  d x -\int_{\mathbb{R}^3}P |{u_c}|^{s-2}\Re (u_c,u_c^+)d x= \omega_c\|u_c^+\|_{L^2}^2.
  $$
  By using Lemma \ref{lemm:4.1}, we obtain that there exists a constant $\mathcal{C}$ independent of $c$, such that
  $$
  \int_{\mathbb{R}^3 } (\Gamma *K|{u_c}|^\kappa)(x)K|{u_c}|^{\kappa-2}(x)\Re(u_c,u_c^+) d x + \int_{\mathbb{R}^3}P |{u_c}|^{s-2}\Re (u_c,u_c^+) dx\leq \mathcal{C}.
  $$
  Hence, we get
  $$
  (\omega_c-mc^2)\|u_c^+\|_{L^2}^2 \geq \omega_c\|u_c^+\|_{L^2}^2- \|u_c^+\|_c^2\geq  -\mathcal{C},
  $$
  which implies $-\infty < \liminf\limits_{c\to \infty} (\omega_c-mc^2).$ Therefore, we have
\begin{equation}\tag{3.2}\label{4.0}
-\infty < \liminf\limits_{c\to \infty} (\omega_c-mc^2)\leqslant \limsup\limits_{c\to \infty} (\omega_c-mc^2)< 0.
\end{equation}
This ends the proof of Theorem \ref{them:1.1}.

\section{Nonrelativistic Limits  of the   Normalized Solutions}

This section is devoted to the limit of the solutions $\{u_c=(f_c,g_c)^T\}$ of the equation \ref{eq:Dirac} as $c\rightarrow \infty$. We will prove that the first components of the family $\{u_c\}$ converge to a solution of nonlinear Sch\"odinger equation \eqref{NSE} and the second components converge to zero. Actually, normalized solutions for nonlinear Dirac equations exhibit numerous advantageous properties, allowing for straightforward verification of various nonrelativistic limit results. Specifically, we can directly establish that the family $\{u_c\}$is bounded away from zero in $H^1(\mathbb{R}^3,\mathbb{C}^4)$ by utilizing the Sobolev Embedding Theorem.  
\begin{lemma}\label{lemm:4.2}
  The family $\{g_c\}$ converges to $0$ in $H^1(\mathbb{R}^3,\mathbb{C}^2)$, as $c\to \infty.$ Moreover, there holds
  \[\|g_c\|_{H^1}=\mathcal{O}\left(\frac{1}{c}\right), \quad \text{as}\quad c\rightarrow \infty.\]
\end{lemma}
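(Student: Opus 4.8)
The plan is to decompose \ref{eq:Dirac} into its two two-spinor components and extract a quantitative estimate for $g_c$ from each. Writing $u_c=(f_c,g_c)^T$ and using the block structure of $\alpha_k$ and $\beta$, the equation splits into the coupled system
\begin{align*}
-ic\,\sigma\cdot\nabla g_c &= (\omega_c-mc^2+V_c)\,f_c,\\
-ic\,\sigma\cdot\nabla f_c &= (\omega_c+mc^2+V_c)\,g_c,
\end{align*}
where $V_c:=\Gamma*(K|u_c|^\kappa)K|u_c|^{\kappa-2}+P|u_c|^{s-2}$ is a non-negative scalar potential by $(K_1)$, $(P_1)$ and $(\Gamma_1)$. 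The algebraic identity $(\sigma\cdot\nabla)^2=\Delta\,I_2$ yields $\|\sigma\cdot\nabla\varphi\|_{L^2}=\|\nabla\varphi\|_{L^2}$ for every $\varphi\in H^1(\mathbb{R}^3,\mathbb{C}^2)$, which converts all the first-order terms into ordinary gradients.

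For the $L^2$-bound I would invoke the second equation. Since $V_c\ge 0$ and $\omega_c>0$ by Theorem \ref{them:1.1}, the coefficient satisfies $\omega_c+mc^2+V_c\ge mc^2>0$ pointwise, so solving for $g_c$ gives the pointwise domination $|g_c|\le \frac{c}{mc^2}\,|\sigma\cdot\nabla f_c|$ and therefore
\[
\|g_c\|_{L^2}\le \frac{1}{mc}\,\|\sigma\cdot\nabla f_c\|_{L^2}=\frac{1}{mc}\,\|\nabla f_c\|_{L^2}.
\]
By Lemma \ref{lemm:4.1} the family $\{u_c\}$ is bounded in $H^1$ uniformly in $c$, hence $\|\nabla f_c\|_{L^2}\le C$ and $\|g_c\|_{L^2}=\mathcal{O}(1/c)$. (Equivalently one tests the second equation against $g_c$, discards the non-negative $\int_{\mathbb{R}^3}V_c|g_c|^2\,dx$, and applies Cauchy--Schwarz.)

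For the gradient I would use the first equation, which gives $c\,\|\nabla g_c\|_{L^2}=\|(\omega_c-mc^2+V_c)f_c\|_{L^2}$. The term $\|(\omega_c-mc^2)f_c\|_{L^2}\le|\omega_c-mc^2|\,\|f_c\|_{L^2}$ is bounded because $\|f_c\|_{L^2}\le\|u_c\|_{L^2}=1$ and $\omega_c-mc^2$ is bounded by \eqref{4.0}. The remaining term $\|V_cf_c\|_{L^2}$ is controlled uniformly in $c$: the local part obeys $\|P|u_c|^{s-2}f_c\|_{L^2}\lesssim\|u_c\|_{L^{2(s-1)}}^{s-1}$, which is bounded by Sobolev embedding since $2(s-1)\le 10/3<6$, while the nonlocal part is dominated pointwise by $\mathscr{F}(u_c)=\Gamma*(K|u_c|^\kappa)K|u_c|^{\kappa-1}$ (using $\kappa\ge 2$ and $|f_c|\le|u_c|$), which lies in $L^2$ with norm controlled by $\|u_c\|_{H^{1/2}}$ thanks to Lemma \ref{lemm:2.7} and the uniform $H^1$ (hence $H^{1/2}$) bound. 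Thus $\|\nabla g_c\|_{L^2}=\mathcal{O}(1/c)$, and combining with the previous step gives $\|g_c\|_{H^1}=\mathcal{O}(1/c)$, which forces $g_c\to 0$ in $H^1$. The only step needing genuine care is this uniform control of $\|V_cf_c\|_{L^2}$; the decisive structural feature is that the large parameter $mc^2$ enters the second-equation coefficient $\omega_c+mc^2+V_c\approx 2mc^2$ with a favorable sign, so that inverting it produces precisely the gain of order $1/c$.
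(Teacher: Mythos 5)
Your proof is correct and follows essentially the same route as the paper: split the equation into its two-spinor components, use the equation containing $\sigma\cdot\nabla g_c$ (whose coefficient $mc^2-\omega_c$ stays bounded by \eqref{4.0}) to get $\|\nabla g_c\|_{L^2}=\mathcal{O}(1/c)$, and use the equation containing the large coefficient $mc^2+\omega_c$ to get $\|g_c\|_{L^2}=\mathcal{O}(1/c)$, with Lemma \ref{lemm:4.1} supplying the uniform bounds on the nonlinear terms. Your pointwise division by $\omega_c+mc^2+V_c\ge mc^2$ (exploiting $V_c\ge 0$) is a slightly cleaner way to handle the $V_cg_c$ term than the paper's norm estimate, but it is not a genuinely different argument.
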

\begin{proof}
 We can rewrite the equation as follows:
  \begin{equation}\tag{4.1}\label{4}
    \begin{aligned}
      -i c \sigma \cdot \nabla f_c-m c^2 g_c-\omega_c g_c=P |u_c|^{s-2} g_c+\Gamma * (K |{u_c}|^\kappa) K|{u_c}|^{\kappa-2}{g_c}, \\
    \end{aligned}
  \end{equation}
  \begin{equation}\tag{4.2}\label{5}
    \begin{aligned}
      -i c\sigma \cdot \nabla g_c+m c^2 f_c-\omega_c f_c=P |u_c|^{s-2} f_c+\Gamma *( K |{u_c}|^\kappa) K|{u_c}|^{\kappa-2}{f_c}.
    \end{aligned}
  \end{equation}
Dividing \eqref{5} by $c$, then its $L^2$-norm squared reads
$$
\begin{aligned}
\|\nabla g_c\|_{L^2}^2= \left\|  -\frac{mc^2-\omega_c}{c} f_c+\frac{1}{ c}P  |u_c|^{s-2} f_c+\frac{1}{c}\Gamma *  ( K |{u_c}|^\kappa) K|{u_c}|^{\kappa-2}{f_c}  \right\|_{L^2}^2.
\end{aligned}
$$
Therefore,  by Lemma \ref{lemm:4.1} and \eqref{4.0}, we have

$$
\begin{aligned}
\|\nabla g_c\|_{L^2}^2\lesssim \frac{(mc^2-\omega_c)^2}{c^2}\|f_c\|_{L^2}^2+ \frac{1}{c^2}\|\nabla u_c\|_{L^2}^{2s-2}+\frac{1}{c^2}\|\nabla u_c\|_{L^2}^2\lesssim \frac{1}{c^2}.
\end{aligned}
$$
Dividing \eqref{4} by $c^2$ and using Lemma \ref{lemm:4.1}, we get
$$
\begin{aligned}
\frac{(mc^2+\omega_c)}{c^2}\|g_c\|_{L^2}\lesssim \frac{1}{c}\|\nabla f_c\|_{L^2}+\frac{1}{c^2}\|u_c\|_{L^{2s-2}}^{s-1}+\frac{1}{c^2}\|\nabla u_c\|_{L^2}\lesssim\frac{1}{c},
\end{aligned}
$$
which implies that
$$
\|g_c\|_{H^1}= \mathcal{O}\left(\frac{1}{c}\right), \,\,\, as \,\,\, c\to \infty.
$$
This ends the proof.
\end{proof}

\medskip
It is clear that a function $h\in H^1(\mathbb{R}^3,\mathbb{C}^2)$ is a solution of the nonlinear Schr\"odinger equation \eqref{NSE} if and only if $\|h\|_{L^2}=1$ and it is a critical point of the following functional $\mathcal{J}:H^1(\mathbb{R}^3,\mathbb{C}^2)\rightarrow \mathbb{R}$ which is defined by
$$
\mathcal{J}(h):=\frac{1}{2}\int_{\mathbb{R}^3}|\nabla h|^2 dx+ \frac{\nu}{2}\int_{\mathbb{R}^3}|h|^2dx-\frac{2m}{s}\int_{\mathbb{R}^3}P |h|^sdx-\frac{m}{\kappa}\int_{\mathbb{R}^3} \Gamma * (K |{h}|^\kappa) K|h|^{\kappa}dx.
$$
In what follows, we consider subsequences $\{u_{c_n}=\left(f_{c_n}, g_{c_n}\right)^T\}$ of the family of solutions. Denote $$e_{n}=\inf _{\substack{W \subset E^+ \\ \operatorname{dim} W=1}} \sup _{{u} \in   S_W} \mathcal{I}^{c_n}({u}).$$ Then we will prove that any subsequence $\{f_{c_n}\}$ of the family is a $(PS)$-sequence of the functional $\mathcal{J}$ in $H^1(\mathbb{R}^3,\mathbb{C}^2)$, where $c_n\rightarrow \infty$ as $n\rightarrow \infty$.

\begin{lemma}\label{lemm:4.3}
 The sequence $\left\{f_{c_n}\right\}$ is a $(PS)$-sequence of the functional $\mathcal{J}\in \mathcal{C}^1(H^1(\mathbb{R}^3,\mathbb{C}^2),\mathbb{R})$.

\end{lemma}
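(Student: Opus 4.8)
The plan is to verify directly the two defining conditions for $\{f_{c_n}\}$ to be a Palais--Smale sequence of $\mathcal J$: that $\{\mathcal J(f_{c_n})\}$ is bounded and that $\mathcal J'(f_{c_n})\to 0$ in the dual of $H^1(\mathbb R^3,\mathbb C^2)$. (That $\mathcal J$ is of class $\mathcal C^1$ follows in the standard way from the subcriticality of the exponents $s\in(2,8/3]$, $\kappa\in[2,7/3)$ together with Corollary \ref{lemm:2.3}, Lemma \ref{lemm:2.4} and the local Lipschitz property in Lemma \ref{lemm:2.7}.) The boundedness of the energy is the easy half: by Lemma \ref{lemm:4.1} the family $\{u_{c_n}\}$, hence $\{f_{c_n}\}$, is bounded in $H^1$ uniformly in $c_n$, so the quadratic part of $\mathcal J(f_{c_n})$ is bounded, while the two nonlinear integrals are dominated by $\|f_{c_n}\|_{H^1}$ through Corollary \ref{lemm:2.3} and Lemma \ref{lemm:2.4}.

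The heart of the proof is the gradient estimate, which I would obtain by eliminating the lower spinor component $g_{c_n}$ from the coupled system \eqref{4}--\eqref{5} and recognizing the resulting equation for $f_{c_n}$ as a vanishing perturbation of \eqref{NSE}. Abbreviate the combined nonlinearity by $R_c:=P|u_c|^{s-2}+\Gamma*(K|u_c|^\kappa)K|u_c|^{\kappa-2}$. From \eqref{4} one has $(mc^2+\omega_c)g_c=-ic\,\sigma\cdot\nabla f_c-R_cg_c$, which lets me solve for $ic\,g_c$ and substitute it into the weak form of \eqref{5} tested against $\phi\in\mathcal C_0^\infty(\mathbb R^3,\mathbb C^2)$. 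Using that $-i\sigma\cdot\nabla$ is symmetric and the Pauli identity $(\sigma\cdot\nabla)^2=\Delta$ (which converts the Dirac Dirichlet form $\Re\int(\sigma\cdot\nabla f_c)\cdot\overline{\sigma\cdot\nabla\phi}$ into the Schr\"odinger form $\Re\int\nabla f_c\cdot\overline{\nabla\phi}$), and then multiplying through by $(mc^2+\omega_c)/c^2$, I arrive at
\[\Re\int_{\mathbb R^3}\nabla f_c\cdot\overline{\nabla\phi}\,dx+\frac{(mc^2-\omega_c)(mc^2+\omega_c)}{c^2}\,\Re\int_{\mathbb R^3}f_c\cdot\overline{\phi}\,dx-\frac{mc^2+\omega_c}{c^2}\,\Re\int_{\mathbb R^3}R_cf_c\cdot\overline{\phi}\,dx=\mathcal R_c[\phi],\]
where the remainder $\mathcal R_c[\phi]$ carries a factor $1/c$ in front of an integral of $R_cg_c$ against $\sigma\cdot\nabla\phi$. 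I would then let $c_n\to\infty$ using \eqref{4.0} and the definition of $\nu$: the nonlinear prefactor $(mc_n^2+\omega_{c_n})/c_n^2\to 2m$ reproduces exactly the factor $2m$ of $\mathcal J'$, while the zeroth-order coefficient $\tfrac{(mc_n^2-\omega_{c_n})(mc_n^2+\omega_{c_n})}{c_n^2}$ converges to a positive constant which we identify with the coefficient $\nu$ of \eqref{NSE}.

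Two technical points then close the argument. First, the remainder $\mathcal R_c[\phi]$ must be shown to be $o(1)\|\phi\|_{H^1}$ uniformly in $\phi$; this is precisely where the decay rate $\|g_c\|_{H^1}=\mathcal O(1/c)$ of Lemma \ref{lemm:4.2} is indispensable, since the spectral prefactor $c/(mc^2+\omega_c)=\mathcal O(1/c)$ must absorb the only-bounded quantity $\|R_cg_c\|_{L^2}$, which is controlled via Lemma \ref{lemm:4.1} together with the estimates of Lemmas \ref{lemm:2.5} and \ref{lemm:2.7}. Second, the nonlinear term above involves $|u_c|=(|f_c|^2+|g_c|^2)^{1/2}$ rather than $|f_c|$; replacing $R_cf_c$ by $P|f_c|^{s-2}f_c+\Gamma*(K|f_c|^\kappa)K|f_c|^{\kappa-2}f_c$ produces an error tending to zero because $g_c\to0$ in every $L^p$, $p\in[2,6]$, and the nonlinear maps are continuous (locally Lipschitz by Lemma \ref{lemm:2.7}, with the multilinear bound of Lemma \ref{lemm:2.5}). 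Assembling these, the displayed identity reduces to $\mathcal J'(f_{c_n})[\phi]=o(1)\|\phi\|_{H^1}$, which is the claim. I expect the uniform control of $\mathcal R_c$ — the interplay between the spectral factor $c/(mc^2+\omega_c)$ and the $\mathcal O(1/c)$ decay of $g_c$ from Lemma \ref{lemm:4.2} — to be the main obstacle, the remaining steps being a convergence of coefficients and the continuity of the nonlinearities.
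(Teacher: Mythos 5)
Your proposal is correct and follows essentially the same route as the paper: eliminate $g_{c_n}$ from the coupled system via $(\sigma\cdot\nabla)^2=\Delta$ to obtain a Schr\"odinger-type identity for $f_{c_n}$ with coefficients $\frac{m^2c_n^4-\omega_{c_n}^2}{c_n^2}\to\nu$ and $\frac{mc_n^2+\omega_{c_n}}{c_n^2}\to 2m$, kill the $\frac{1}{c_n}\,\sigma\cdot\nabla(\cdot\,g_{c_n})$ remainder, and replace $|u_{c_n}|$ by $|f_{c_n}|$ in the nonlinearities using $g_{c_n}\to 0$. The only cosmetic difference is that the paper works with the strong form (4.3) and an intermediate functional $\mathcal J_n$ with coefficients $a_n,b_n$ before passing to $\mathcal J$, while you pass to the weak formulation directly; also note that for the remainder term the $1/c_n$ prefactor together with mere boundedness of $g_{c_n}$ already suffices, Lemma \ref{lemm:4.2} being needed chiefly for the nonlinearity replacement.
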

\begin{proof}
 Since $u_{c_n}=(f_{c_n},g_{c_n})^T$ is a solution of the nonlinear Dirac equation \ref{eq:Dirac} with $c=c_n$.
Then the equation \eqref{4} implies
  $$
  -i \sigma \cdot \nabla\left( -i c_n\sigma \cdot \nabla f_{c_n}-m c_n^2 g_{c_n}-\omega_{c_n} g_{c_n}\right)=   -i \sigma \cdot \nabla\ \left( P |u_{c_n}|^{s-2} g_{c_n}+\Gamma *( K |{u_{c_n}}|^\kappa) K|{u_{c_n}}|^{\kappa-2}{g_{c_n}}\right),
  $$
that is
\begin{equation}\tag{4.3}\label{ee1}
  \begin{aligned}
    -\Delta f_{c_n}=&\frac{mc_n^2+\omega_{c_n}}{c_n^2}\left( -(m c_n^2- \omega_{c_n}) f_{c_n}+ P |u_{c_n}|^{s-2} f_{c_n}+\Gamma * (K|{u_{c_n}}|^\kappa) K |{u_{c_n}}|^{\kappa-2}{f_{c_n}} \right)\\
    &-\frac{i}{c_n} \sigma \cdot \nabla\left( P |u_{c_n}|^{s-2} g_{c_n}+\Gamma *( K |{u_{c_n}}|^\kappa) K|{u_{c_n}}|^{\kappa-2}{g_{c_n}}\right)  .
  \end{aligned}
\end{equation}
Set
$$
a_n:=\frac{m^2c_n^4-\omega_{c_n}^2}{c_n^2}\to \nu,\quad b_n=\frac{mc_n^2+\omega_{c_n}}{c_n^2}\to 2m,\quad \text{as}\quad n\rightarrow \infty.
$$
 Multiplying \eqref{ee1} by $\phi$ and integrating over $\mathbb{R}^3$, we get
\begin{equation}\tag{4.4}\label{7}
  \begin{aligned}
    \Re\int_{\mathbb{R}^3}-\Delta f_{c_n}\cdot \phi dx+a_n\Re\int_{\mathbb{R}^3} f_{c_n}\cdot\phi dx =  L_1+L_2.
  \end{aligned}
\end{equation}
where
\[L_1:= b_n\Re\int_{\mathbb{R}^3}\left(P |u_{c_n}|^{s-2} f_{c_n}+\Gamma * (K|{u_{c_n}}|^\kappa) K |{u_{c_n}}|^{\kappa-2}{f_{c_n}} \right)\cdot \phi dx,\]
\[L_2:= - \Re\int_{\mathbb{R}^3}\frac{i}{c_n} \sigma \cdot \nabla\left( P |u_{c_n}|^{s-2} g_{c_n}+\Gamma *( K |{u_{c_n}}|^\kappa) K |{u_{c_n}}|^{\kappa-2}{g_{c_n}}\right)\cdot  \phi dx.\]
Then we deduce from Lemma \ref{lemm:4.1} that
$$
\begin{aligned}
  L_2\lesssim \frac{1}{c_n}\|\nabla\phi\|_{L^2}\left\| P |u_{c_n}|^{s-2} g_{c_n}+\Gamma *( K |{u_{c_n}}|^\kappa) K |{u_{c_n}}|^{\kappa-2}{g_{c_n}}   \right\|_{L^2} \lesssim \frac{1}{c_n}\to 0 ,\quad as \,\,\,n\to \infty.
\end{aligned}
$$
By Lemma \ref{lemm:4.2}, we obtain
$$
\begin{aligned}
   \left|\int_{\mathbb{R}^3}\left( P |u_{c_n}|^{s-2}  f_{c_n} -  P | f_{c_n}|^{s-2} f_{c_n}  \right)\cdot \phi dx\right|\lesssim \int_{\mathbb{R}^3}|g_{c_n}|^{s-2}| f_{c_n}||\phi|dx \to 0 ,\quad as \,\,\,n\to \infty.
\end{aligned}
$$
By Lemma \ref{lemm:2.1} and the H\"older inequality, we get
$$
\begin{aligned}
   &\left|\int_{\mathbb{R}^3}\left(\Gamma * (K|{u_{c_n}}|^\kappa) K |{u_{c_n}}|^{\kappa-2}{f_{c_n}}-\Gamma * (K|{f_{c_n}}|^\kappa) K |{f_{c_n}}|^{\kappa-2}{f_{c_n}} \right)\cdot \phi dx\right|\\
   &\lesssim \left|\int_{\mathbb{R}^3}\left( \Gamma * (K \left||{u_{c_n}}|^\kappa-| f_{c_n}|^\kappa\right|) K |{u_{c_n}}|^{\kappa-1}\right)|\phi|dx\right|\\
   &\quad  + \left|\int_{\mathbb{R}^3}\left( \Gamma * (K|f_{c_n}|^\kappa) K \left| |u_{c_n}|^{\kappa-2}-|f_{c_n}|^{\kappa-2} \right|   \right)|f_{c_n} | | \phi|dx\right|\\
   &\lesssim \left|\int_{\mathbb{R}^3}\left( \Gamma * (K|u_{c_n}|^{\kappa-1}|g_{c_n}|   ) K|{u_{c_n}}|^{\kappa-1}\right)|\phi|dx\right|\\
   &\quad + \left|\int_{\mathbb{R}^3}\left( \Gamma * (K | f_{c_n}|^\kappa) K \left| g_{c_n}\right|^{\kappa-2}   \right)| f_{c_n}||\phi|dx\right|\\
   &\lesssim \|g_{c_n}\|_{L^2}\|\phi\|_{L^2} \left\| u_{c_n}\right\|_{L^{\frac{6\kappa-6}{3\kappa-4}}}^{2\kappa-2}+\left\|f_{c_n}\right\|_{L^{\frac{6\kappa}{3\kappa-1}}}^\kappa\|g_{c_n}\|_{L^2}^{\kappa-2}\|f_{c_n}\|_{L^2}\|\phi\|_{L^3}\\
   &\quad \to 0 \quad as\,\,\, n\to \infty.
\end{aligned}
$$
This implies
\begin{equation}\tag{4.5}\label{8}
  L_1=b_n\Re\int_{\mathbb{R}^3} P | f_{c_n}|^{s-2} f_{c_n}  \cdot \phi dx + b_n\Re\int_{\mathbb{R}^3} \Gamma * (K|{f_{c_n}}|^\kappa) K |{f_{c_n}}|^{\kappa-2}{f_{c_n}}\cdot \phi dx+o(1).
\end{equation}
Consider the functional $\mathcal{J}_n\in \mathcal{C}^2\left(H^1(\mathbb{R}^3,\mathbb{C}^2),\mathbb{R}\right)$  defined by:
$$
\mathcal{J}_n(h):=\frac{1}{2}\int_{\mathbb{R}^3}|\nabla h|^2 dx+\frac{a_n}{2}\int_{\mathbb{R}^3}| h |^2dx-\frac{b_n}{ s}\int_{\mathbb{R}^3}P |h|^sdx- \frac{b_n}{ 2\kappa}\int_{\mathbb{R}^3}(\Gamma * (K |{h}|^\kappa) K |{h}|^{\kappa} dx.
$$
Combining (\ref{7}) and (\ref{8}), we obtain
$$
\sup _{\|\phi\|_{H^1}\leq  1}\left\langle\mathcal{J}_n'\left( f_{c_n}\right), \phi\right\rangle \rightarrow 0 \text { as } n \rightarrow \infty.
$$
It is clear that $$\sup _{\|\phi\|_{H^1}\leq 1}\left\langle\mathcal{J}'( f_{c_n})-\mathcal{J}_n'\left( f_{c_n}\right), \phi\right\rangle\to 0\quad \text{as} \quad n\rightarrow \infty,$$ then we have
$$
\sup _{\|\phi\|_{H^1}\leq 1}\left\langle\mathcal{J}'\left( f_{c_n}\right), \phi\right\rangle\rightarrow 0\,\,\, as\,\,\, n\to \infty.
$$
By Lemma \ref{lemm:4.1} and assumptions of $p$ and $\kappa$, we have $\left\{\mathcal{J}(f_{c_n})\right\}$ is bounded. Therefore, $\left\{f_{c_n}\right\}$ is a $(PS)$-sequence of the functional $\mathcal{J}$.
\end{proof}

\medskip
To study the ground states of nonlinear Schr\"odinger equations \eqref{NSE} under the $L^2$-constraint, we introduce the corresponding energy functional $\mathcal{E}\in \mathcal{C}^1(H^1(\mathbb{R}^3,\mathbb{C}^2),\mathbb{R})$ defined by 
$$ \mathcal{E}(g)=\frac{1}{2}\int_{\mathbb{R}^3}|\nabla g|^2 dx- \frac{m }{\kappa} \int_{\mathbb{R}^3 } \Gamma *(K|{g}|^\kappa)K|{g}|^\kappa d x - \frac{2m}{s}\int_{\mathbb{R}^3}P |{g}|^s d x.
    $$
    It is clear that critical points of $\mathcal{E}$ on the $L^2$-sphere lead to the solutions of the equations \eqref{NSE}. 
    
    \begin{definition}
    We say $h\in H^1(\mathbb{R}^3,\mathbb{C}^2)$ is a ground state of \eqref{NSE} if it is a critical point of $ \mathcal{E}$ and $$ \mathcal{E}(h)=\inf_{\|g\|_{L^2}=1}  \mathcal{E}(g).$$
    \end{definition}
The next two lemmas show that the limit function $h$ is a ground state of \eqref{NSE}.
\begin{lemma}\label{lem443}
 The family of solutions $\left\{u_{c}=(f_c,g_c)^T\right\}$ satisfies  $$\displaystyle\mathcal{I}^c({u_c}) 
    =\frac{1}{m} \mathcal{E}(f_c) +mc^2 + o(1)\quad \text{as} \quad c\rightarrow \infty.$$
\end{lemma}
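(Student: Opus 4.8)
The plan is to evaluate $\mathcal{I}^c(u_c)$ by writing the quadratic form $\int_{\mathbb{R}^3}(u_c,\mathscr{D}_c u_c)_{\mathbb{C}^4}\,dx$ in $2\times2$ blocks according to $u_c=(f_c,g_c)^T$ and then eliminating the small component $g_c$ through the Dirac system \eqref{4}--\eqref{5}. Using the block form of $\mathscr{D}_c$ and the self-adjointness of $-i\sigma\cdot\nabla$, I would first record
\[
\int_{\mathbb{R}^3}(u_c,\mathscr{D}_c u_c)_{\mathbb{C}^4}\,dx
= mc^2\|f_c\|_{L^2}^2-mc^2\|g_c\|_{L^2}^2
+2\Re\int_{\mathbb{R}^3}\bigl(g_c,-ic\,\sigma\cdot\nabla f_c\bigr)_{\mathbb{C}^2}\,dx .
\]
Since $\|u_c\|_{L^2}^2=\|f_c\|_{L^2}^2+\|g_c\|_{L^2}^2=1$, the diagonal part collapses to $mc^2-2mc^2\|g_c\|_{L^2}^2$, so everything reduces to understanding the cross term together with $2mc^2\|g_c\|_{L^2}^2$. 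The key subtlety is that both of these are $O(1)$ rather than $o(1)$, so knowing only $g_c\to0$ is not enough; one must track their cancellation quantitatively.

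Next I would substitute equation \eqref{4}, in the form $-ic\,\sigma\cdot\nabla f_c=(mc^2+\omega_c)g_c+P|u_c|^{s-2}g_c+\Gamma*(K|u_c|^\kappa)K|u_c|^{\kappa-2}g_c$, into the cross term. Its diagonal contribution gives $2(mc^2+\omega_c)\|g_c\|_{L^2}^2$, while the two nonlinear contributions have the form $\int_{\mathbb{R}^3}P|u_c|^{s-2}|g_c|^2$ and $\int_{\mathbb{R}^3}\Gamma*(K|u_c|^\kappa)K|u_c|^{\kappa-2}|g_c|^2$; by Lemma \ref{lemm:4.1}, the H\"older inequality and Lemma \ref{lemm:2.5} these are bounded by powers of $\|g_c\|_{H^1}$, hence $o(1)$ by Lemma \ref{lemm:4.2}. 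Combining with the diagonal part and using $-2mc^2\|g_c\|_{L^2}^2+2(mc^2+\omega_c)\|g_c\|_{L^2}^2=2\omega_c\|g_c\|_{L^2}^2$ yields
\[
\int_{\mathbb{R}^3}(u_c,\mathscr{D}_c u_c)_{\mathbb{C}^4}\,dx = mc^2+2\omega_c\|g_c\|_{L^2}^2+o(1).
\]

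The crux is then to show $2\omega_c\|g_c\|_{L^2}^2=\tfrac{1}{2m}\|\nabla f_c\|_{L^2}^2+o(1)$, which I expect to be the main obstacle. For this I would read off from \eqref{4} the leading-order identity $g_c=\dfrac{-ic\,\sigma\cdot\nabla f_c}{mc^2+\omega_c}+r_c$, where $r_c$ collects the nonlinear terms divided by $mc^2+\omega_c$ and, by Lemmas \ref{lemm:4.1} and \ref{lemm:4.2}, satisfies $\|r_c\|_{L^2}\lesssim c^{-2}\|g_c\|_{H^1}=o(c^{-1})$. Using $\|(-i\sigma\cdot\nabla)f_c\|_{L^2}^2=\|\nabla f_c\|_{L^2}^2$ (since $(-i\sigma\cdot\nabla)^2=-\Delta$) and the uniform $H^1$ bound on $f_c$, the cross and remainder contributions are $o(c^{-2})$, so that $\|g_c\|_{L^2}^2=\dfrac{c^2}{(mc^2+\omega_c)^2}\|\nabla f_c\|_{L^2}^2+o(c^{-2})$. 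Invoking the frequency asymptotics $\omega_c/(mc^2)\to1$ from \eqref{4.0}, the prefactor satisfies $\dfrac{2\omega_c c^2}{(mc^2+\omega_c)^2}\to\dfrac{1}{2m}$, and the uniform boundedness of $\|\nabla f_c\|_{L^2}$ turns the error into $o(1)$, giving $2\omega_c\|g_c\|_{L^2}^2=\tfrac{1}{2m}\|\nabla f_c\|_{L^2}^2+o(1)$. This step is delicate precisely because it needs the quantitative rate $\|g_c\|_{H^1}=\mathcal{O}(1/c)$ and the frequency asymptotics, not merely qualitative convergence.

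Finally I would replace $u_c$ by $f_c$ in the two nonlinear terms of $\mathcal{I}^c$. Since $g_c\to0$ in $H^1$ and $\{f_c\}$ is bounded in $H^1$, the local term satisfies $\int_{\mathbb{R}^3}P|u_c|^s\,dx=\int_{\mathbb{R}^3}P|f_c|^s\,dx+o(1)$ by Corollary \ref{lemm:2.3} and the elementary bound $\bigl||u_c|^s-|f_c|^s\bigr|\lesssim(|f_c|^{s-1}+|g_c|^{s-1})|g_c|$, while the nonlocal term satisfies $\int_{\mathbb{R}^3}\Gamma*(K|u_c|^\kappa)K|u_c|^\kappa\,dx=\int_{\mathbb{R}^3}\Gamma*(K|f_c|^\kappa)K|f_c|^\kappa\,dx+o(1)$ by the local Lipschitz property of $\mathscr{F}$ in Lemma \ref{lemm:2.7} together with Lemma \ref{lemm:2.5}. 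Collecting the three contributions gives
\[
\mathcal{I}^c(u_c)=mc^2+\frac{1}{2m}\|\nabla f_c\|_{L^2}^2-\frac{1}{\kappa}\int_{\mathbb{R}^3}\Gamma*(K|f_c|^\kappa)K|f_c|^\kappa\,dx-\frac{2}{s}\int_{\mathbb{R}^3}P|f_c|^s\,dx+o(1),
\]
and the right-hand side is exactly $mc^2+\frac{1}{m}\mathcal{E}(f_c)+o(1)$, which is the claim.
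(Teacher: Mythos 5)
Your argument is correct and follows essentially the same route as the paper: both proofs expand the quadratic form $\int(u_c,\mathscr{D}_c u_c)_{\mathbb{C}^4}\,dx$ in $2\times 2$ blocks, eliminate $g_c$ via the first component equation \eqref{4}, and exploit the quantitative bound $\|g_c\|_{H^1}=\mathcal{O}(1/c)$ together with the frequency asymptotics $\omega_c\sim mc^2$ to identify the surviving $\tfrac{1}{2m}\|\nabla f_c\|_{L^2}^2$ term, before replacing $u_c$ by $f_c$ in the nonlinearities. The only difference is bookkeeping: you track $2\omega_c\|g_c\|_{L^2}^2$ explicitly, whereas the paper bounds the combination $\int\bigl(2mc^2|g_c|^2+\Re(ic\sigma\cdot\nabla f_c,g_c)\bigr)$ by Cauchy--Schwarz against $\bigl\|2mg_c+\tfrac{i}{c}\sigma\cdot\nabla f_c\bigr\|_{L^2}=\mathcal{O}(c^{-3})$; the two computations are equivalent.
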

\begin{proof}
Lemma \ref{lemm:4.2} implies $\|\nabla g_c\|_{L^2}=\mathcal{O}\left(\frac{1}{c}\right)$. Dividing \eqref{4} by $c^2$, we get
$$
\left\|2mg_c + \frac{i\sigma\cdot\nabla f_c}{c} \right\|_{L^2} = \mathcal{O}\left(\frac{1}{c^3}\right).
$$
Then
\begin{equation*}
  \begin{aligned}
    \mathcal{I}^c({u_c})&=\int_{\mathbb{R}^3}({u_c}, \mathscr{D}_c {u_c})_{\mathbb{C}^4}dx-\frac{1  }{\kappa} \int_{\mathbb{R}^3 } \Gamma *(K|{u_c}|^\kappa)K|{u_c}|^\kappa d x -\frac{2}{s}\int_{\mathbb{R}^3}P |{u_c}|^s d x \\
 &=\int_{\mathbb{R}^3}\left(mc^2|f_c|^2 - mc^2|g_c|^2 - 2\Re(ic\sigma\cdot \nabla f_c, g_c)           \right)\\
 &-\frac{1  }{\kappa} \int_{\mathbb{R}^3 } \Gamma *(K|{u_c}|^\kappa)K|{u_c}|^\kappa d x -\frac{2}{s}\int_{\mathbb{R}^3}P |{u_c}|^s d x.
  \end{aligned}
\end{equation*}
Since  
$$
g_c = -\frac{i c \sigma \cdot \nabla f_c + P |u_c|^{s-2} g_c+\Gamma * (K |{u_c}|^\kappa) K|{u_c}|^{\kappa-2}{g_c}}{mc^2 + \omega_c},
$$
one has
$$
\int_{\mathbb{R}^3}\Re(ic\sigma\cdot \nabla f_c, g_c)= -\frac{1}{2m} \int_{\mathbb{R}^3}|\nabla f_c|^2  + o(1).
$$
then
\begin{equation*}
  \begin{aligned}
    \mathcal{I}^c({u_c})
 =&\int_{\mathbb{R}^3}\left(mc^2|f_c|^2 - mc^2|g_c|^2  -  2\Re(ic\sigma\cdot \nabla f_c, g_c)           \right)\\
 &-\frac{1  }{\kappa} \int_{\mathbb{R}^3 } \Gamma *(K|{u_c}|^\kappa)K|{u_c}|^\kappa d x -\frac{2}{s}\int_{\mathbb{R}^3}P |{u_c}|^s d x\\
  =& mc^2 - \int_{\mathbb{R}^3} (2mc^2|g_c|^2 +  \Re(ic\sigma\cdot \nabla f_c, g_c)) + \frac{1}{2m}\int_{\mathbb{R}^3}|\nabla f_c|^2  \\
 &-\frac{1  }{\kappa} \int_{\mathbb{R}^3 } \Gamma *(K|{u_c}|^\kappa)K|{u_c}|^\kappa d x -\frac{2}{s}\int_{\mathbb{R}^3}P|{u_c}|^s d x + o(1).\\ 
  \end{aligned}
\end{equation*}
Notice
$$
\int_{\mathbb{R}^3} |2mc^2|g_c|^2 + \Re (ic\sigma\cdot \nabla f_c, g_c)|\leq  c^2\|g_c\|_{L^2} \left\|2mg_c + \frac{i\sigma\cdot\nabla f_c}{c} \right\|_{L^2}\leq  \mathcal{O}\left(\frac{1}{c}\right),
$$
and
$$
\frac{1  }{\kappa} \int_{\mathbb{R}^3 } \Gamma *(K|{u_c}|^\kappa)K|{u_c}|^\kappa d x + \frac{2}{s}\int_{\mathbb{R}^3}P |{u_c}|^s d x = \frac{1  }{\kappa} \int_{\mathbb{R}^3 } \Gamma *(K|{f_c}|^\kappa)K|{f_c}|^\kappa d x + \frac{2}{s}\int_{\mathbb{R}^3}P |{f_c}|^s d x + o(1),
$$
then
\begin{equation*}
  \begin{aligned}
    \mathcal{I}^c({u_c}) &= \frac{1}{2m}\int_{\mathbb{R}^3}|\nabla f_c|^2 - \frac{1  }{\kappa} \int_{\mathbb{R}^3 } \Gamma *(K|{f_c}|^\kappa)(x)K(x)|{f_c}|^\kappa d x - \frac{2}{s}\int_{\mathbb{R}^3}P(x) |{f_c}|^s d x + mc^2 + o(1)\\
    & =\frac{1}{m} \mathcal{E}(f_c) +mc^2 + o(1).
  \end{aligned}
\end{equation*}  This ends the proof.
\end{proof}

\medskip

\noindent [\textbf{Proof of Theorem 1.2}]
From Lemma \ref{lemm:4.1} and \ref{lemm:4.3}, we have $\{f_{c_n}\}$ is a bounded $(PS)$-sequence of $\mathcal{J}$. Then up to a subsequence, we have for some $e_0>0$, 
\[\mathcal{J}(f_{c_n})\rightarrow e_0 \quad \text{as}\quad n\rightarrow \infty,\]
and there is a function $h\in H^1\left(\mathbb{R}^3,\mathbb{C}^2\right)$, such that $ f_{c_n}\rightharpoonup h$ in $H^1$. Denote the bilinear functional $\mathcal{B}: H^1(\mathbb{R}^3,\mathbb{C}^2)\times H^1(\mathbb{R}^3,\mathbb{C}^2)\rightarrow \mathbb{R}$ as 
\[\mathcal{B}(h,w)=\Re\int_{\mathbb{R}^3} \nabla h\cdot \nabla w dx+\nu \Re\int_{\mathbb{R}^3} h\cdot w dx .\]
By direct computation, we obtain
\begin{align*}
   \langle \mathcal{J}'(f_{c_n}),f_{c_n}-h\rangle -\mathcal{B}(h, f_{c_n}-h) 
    =   \int_{\mathbb{R}^3} |\nabla (f_{c_n}-h)|^2 dx+\nu \Re\int_{\mathbb{R}^3} |f_{c_n}-h|^2 dx &\\
    -2m\Re\int_{\mathbb{R}^3}\left(P |u_{c_n}|^{s-2} f_{c_n}+\Gamma * (K|{u_{c_n}}|^\kappa) K |{u_{c_n}}|^{\kappa-2}{f_{c_n}} \right)\cdot (f_{c_n}-h) dx.&
\end{align*}
Together with assumptions on $P(x)$, $\Gamma(x)$, and $K(x)$, we have 
\[ \int_{\mathbb{R}^3} |\nabla (f_{c_n}-h)|^2 dx+\nu \Re\int_{\mathbb{R}^3} |f_{c_n}-h|^2 dx=o(1),\]
which implies
 $$
 \| f_{c_n}-h\|_{H^1}\to 0\,\,\, as \,\,\, n\to \infty.
 $$
 Therefore, we obtain
 \[\mathcal{J}(h)=e_0,\quad \mathcal{J}'(h)=0,\quad \text{and}\quad \|h\|_{L^2}=1.\]
Hence  $h$ is a (weak) solution of the following equation
\begin{equation*}
  \begin{cases}
   & -\Delta h +\nu h=  2mP|h|^{s-2}h + 2m \Gamma*(K |h|^\kappa)|h|^{\kappa-2}h, \\
&\|h\|_{L^2}=1.
  \end{cases}
\end{equation*}
{\bf Claim.}
  The function $h\in H^1(\mathbb{R}^3,\mathbb{C}^2)$ is a ground state of \eqref{NSE}, that is $$\mathcal{E}(h)  = \inf\limits_{\|g\|_{L^2} = 1} \mathcal{E}(g).$$
\begin{proof}
  It is sufficient to prove 
  $$
   \limsup\limits_{n\to\infty} \, m( \mathcal{I}^{c_n}({u_{c_n}}) - mc_n^2)
    \leq \inf\limits_{\|g\|_{L^2} = 1} \mathcal{E}(g).
  $$
  For $v \in H^1(\mathbb{R}^3, \mathbb{C}^2)$, $\|v\|_{L^2} = 1$, set $w_{c_n}=P^+_{c_n}\left(\begin{array}{l}v \\ 0\end{array}\right)$,
   Proposition \ref{prop:3.2} implies  there exists unique $\varphi  (w_{c_n}) \in   S_{W_{c_n}}$, such that
  $$
  \mathcal{I}^{c_n}\left(\varphi  (w_{c_n})\right)=\sup _{{u} \in   S_{W_{c_n}}} \mathcal{I}^{c_n}({u}).
  $$
It is clear that 
\begin{equation*}
  \begin{aligned}
    \left\|P^-_{c_n}\left(\begin{array}{l}v \\ 0\end{array}\right)\right\|_{L^2}^2&=\frac{1}{4}  
    \left\|\mathbf{U}^{-1}(\xi)(I_4 -\beta)\mathbf{U}(\xi)\left(\begin{array}{l}\hat{v}  \\ 0\end{array}\right)\right\|_{L^2}^2\\
    &=\frac{1}{4}\left\|\left(1-\frac{m{c_n}^2}{\lambda(\xi)}\right)\hat{v}(\xi)\right\|_{L^2}^2 + \frac{1}{4}\left\| \frac{c}{\lambda(\xi)} \xi \cdot \sigma\hat{v}(\xi) \right\|_{L^2}^2 \\
   & = o(1), \,\,\, as \,\,\,n\to  \infty.
  \end{aligned}
\end{equation*}
Similarly, we have 
$$
\left\|\nabla P^-_{c_n}\left(\begin{array}{l}v \\ 0\end{array}\right)\right\|_{L^2}^2 = o(1),
$$
which yields
$$
\left\|P^-_{c_n}\left(\begin{array}{l}v \\ 0\end{array}\right)\right\|_{H^1}^2 = o(1).$$
Consequently, by Proposition \ref{prop:3.2}, we obtain
$$
\left\|\left(\begin{array}{l}v \\ 0\end{array}\right) - \varphi(w_{c_n})^+ \right\|_{H^1}^2 =\left\|w_{c_n} - \varphi(w_{c_n})^+ +P^-_{c_n}\left(\begin{array}{l}v \\ 0\end{array}\right)\right\|_{H^1}^2= o(1).$$
Notice
$
\mathscr{D}_{c_n} \varphi(w_{c_n})^+ = \sqrt{-c_n^2 \Delta+m^2 c_n^4}\varphi(w_{c_n})^+.
$
Then, we obtain
$$
(\mathscr{D}_{c_n} \varphi(w_{c_n})^+, \varphi(w_{c_n})^+ )_{L^2}\leq  mc_n^2 +\frac{1}{2m}\| \nabla \varphi(w_{c_n})^+  \|_{L^2}^2\leq mc_n^2 + \frac{1}{2m}\| \nabla v  \|_{L^2}^2 + o(1).
$$
Therefore, 
$$
\mathcal{I}^{c_n}\left(\varphi  (w_{c_n})\right)\leq  \frac{1}{m} \mathcal{E}(v) +mc_n^2 + o(1).
$$
This implies
$$
\limsup\limits_{n\to\infty} \, m( \mathcal{I}^{c_n}({u_{c_n}}) - mc_n^2)
 \leq \inf\limits_{\|g\|_{L^2} = 1} \mathcal{E}(g).
$$
Then by Lemma \ref{lem443}, we have 
$$
\mathcal{E}(h)= \inf\limits_{\|g\|_{L^2} = 1} \mathcal{E}(g).
$$
This ends the proof of this claim.
\end{proof}
\medskip

Therefore, we have $h$ is a ground state solution of the nonlinear Schr\"odinger equations \eqref{NSE}. 
This ends  the proof of Theorem 1.2.

\medskip

\noindent [\textbf{Proof of Theorem 1.3}]
First, we need to verify  $\{u_c\}$ is bounded in  $L^{\infty}$.
Lemma \ref{lemm:4.1} yields  there exist a constant $\mathcal{C}$, such that
  $$
  \| u_c \|_{L^p}\leq  \mathcal{C}  ,\quad \forall p\in [2,6].
  $$
  Dividing (6) by $c$, and combining Lemma \ref{lemm:2.4} and Lemma \ref{lemm:4.2}, we have 
  \begin{equation}\tag{4.6}\label{10}
    \begin{aligned}
      \|\nabla g_c\|_{L^3}^3&\lesssim \frac{(mc^2+\omega_c)^3}{c^3}\| f_c\|_{L^3}^3+\frac{1}{c^3}\int_{\mathbb{R}^3}|u_c|^{3s-6}|f_c|^3+\frac{1}{c^3}\int_{\mathbb{R}^3}\left( \Gamma * K|{u_c}|^\kappa K|{u_c}|^{\kappa-2}|{f_c}|\right)^3\\
     & \lesssim c^3  \| f_c\|_{L^3}^3+\frac{1}{c^3}\| u_c\|_{L^{3s-3}}^{3s-3}+\frac{1}{c^3}\| u_c\|_{L^{3\kappa-3}}^{3\kappa-3}\lesssim \mathcal{C} .
    \end{aligned}
  \end{equation}
Similarly, we get
\begin{equation}\tag{4.7}\label{11}
  \|\nabla f_c\|_{L^3}^3\lesssim \frac{1}{c^3}  \|  g_c\|_{L^3}^3+\frac{1}{c^3}\| u_c\|_{L^{3s-3}}^{3s-3}+\frac{1}{c^3}\| u_c\|_{L^{3\kappa-3}}^{3\kappa-3}\lesssim \mathcal{C}.
\end{equation}
Inequalities \eqref{10} and \eqref{11} yields
$$
  \|u_c\|_{W^{1,3}}\lesssim \mathcal{C}.
  $$
  Then   for any $r> 3$, we obtain
  $$
  \|u_c\|_{L^r}\lesssim \mathcal{C} _r .
  $$
  Since the  embedding $W^{1,3}(\mathbb{R}^3,\mathbb{C}^4)\hookrightarrow L^r(\mathbb{R}^3,\mathbb{C}^4)$ is continuous,
consequently, we get
$$\|u_c\|_{W^{1,r}}\lesssim \mathcal{C} _r .$$
Using the Sobolev embedding theorem again, we have  $$\|u_c\|_{L^\infty}\leq   \mathcal{C} .$$
Set $$  \quad M= - \Gamma * ( K| u_c |^\kappa) K| u_c |^{\kappa-2}, \quad N= - P  | u_c |^{s-2}.$$
then
$$
 u_c(x)=-(\mathscr{D}_c-\omega_c)^{-1}(M+N) u_c =-\int_{\mathbb{R}^3}Q(x-y)(M+N) u_c (y)dy.
$$
The next thing to do is to compute $Q$.
For $ u_c =( u _1, u _2, u _3, u _4)\in H^{1}(\mathbb{R}^3,\mathbb{C}^4)$, we have $$\mathcal{F}(\mathscr{D}_c-\omega_c) u_c(x)=A\mathcal{F} u_c (x),$$ where
$$
A=c\sum_{k=1}^3\alpha_k x_k+mc^2 \beta -\omega I.
$$
Set $$B= \frac{1}{\omega_c}(c\sum_{k=1}^3\alpha_k x_k+mc^2 \beta),\quad 
B^{-1}= \frac{\omega_c^2}{c^2|x|^2+m^2c^4}B,
$$
then we have $$A =\omega_c B(I-B^{-1}).$$
Denote $T=\frac{\omega_c^2}{c^2|x|^2+m^2c^4}<1$,
thus
$$
A^{-1}=\frac{1}{\omega_c}B^{-1}\left( \sum_{k=0}^{\infty} B^{-k}\right)=\frac{1}{\omega_c}\frac{T}{1-T}(I+B),
$$
and
   $$
   \begin{aligned}
    &\mathcal{F}^{-1}(A ^{-1})=\mathcal{F}^{-1}\frac{|\omega_c|}{c^2|x|^2+m^2c^4-\omega_c^2}(I+B)\\
    &=\left(ic  \frac{\boldsymbol{\alpha} \cdot x}{|x|^2}+ i\sqrt{m^2c^4-\omega_c^2}\frac{\boldsymbol{\alpha} \cdot {x}}{|{x}|}+\beta mc^2 +\omega_c\right) \frac{1}{4 \pi c^2|{x}|e^{\sqrt[]{m^2c^2-\omega_c^2/c^2}|x|}}.
   \end{aligned}
   $$
Hence$$ Q(x)=\left(ic  \frac{\boldsymbol{\alpha} \cdot x}{|x|^2}+ i\sqrt{m^2c^4-\omega_c^2}\frac{\boldsymbol{\alpha} \cdot {x}}{|{x}|}+\beta mc^2 +\omega_c\right) \frac{1}{4 \pi c^2|{x}|e^{\sqrt[]{m^2c^2-\omega_c^2/c^2}|x|}}$$
Notice
$$
\sqrt[]{m^2c^2-\omega_c^2/c^2}\to \sqrt{\nu }
$$
and $M+N$ is bounded uniformly, then a direct application of an argument in \cite[Theorem 2.1]{MR1785381}, we  get  for any  $\delta$ which satisfies $0<\delta < \sqrt{\nu }$, there exist $0< C(\delta)<\infty $ such that for large $c$
$$
|u_c(x)|\leqslant Ce^{-\delta |x|}
$$
for all $x\in\mathbb{R}^3$. This ends the proof of Theorem 1.3.

\section*{Appendix}
\textbf{\bf Lemma \ref{lemm:2.6}}
Let $P\in L^{\infty}(\mathbb{R}^3)$, then for any ${u}  \in  H^{1/2}(\mathbb{R}^3,\mathbb{C}^4) $, with $\|{u}\|_{L^2}=1$,  ${u}=t w+u^- $, where $w=\frac{{u}_+}{\|{u}_+\|_{L^2}}$, $t=\sqrt{1-\|u^-\|_{L^2}^2}$, the following inequality holds
$$
\int_{\mathbb{R}^3 }P |{u}|^s dx\geq C  \int_{\mathbb{R}^3 }P |w|^sdx-C   \|u^- \|_{L^2}^2\|w\|_{H^{1/2}}^2- C  \|u^- \|_{H^{1/2}}^2.
$$
For $w=\mathbf{U}_{\mathrm{FW}}^{-1}\left(\begin{array}{l}v \\ 0\end{array}\right)$ with $v \in H^1\left(\mathbb{R}^3, \mathbb{C}^2\right)$ and $\|v\|_{L^2}^2=1$, we have

$$
\int_{\mathbb{R}^3 } P |{u}|^sdx\geq  C \int_{\mathbb{R}^3 } P |v|^sdx- C \|\nabla v\|_{L^2}^s- C  \|u^- \|_{L^2}^2\|w\|_{H^{1/2}}^2- C \|u^- \|_{H^{1/2}}^2.
$$

\begin{proof}
 Notice for $u=tw+u^-$, one has
 $$
 \begin{aligned}
  \int_{\mathbb{R}^3}P|u|^sdx\geq  & 2^{1-s}t^s\int_{\mathbb{R}^3}P|\omega|^s dx- \int_{\mathbb{R}^3}P|u^-|^sdx\\
\geq  &  C \int_{\mathbb{R}^3}P|\omega|^s dx-  C \|u^-\|_{L^2}^2\int_{\mathbb{R}^3}P|\omega|^sdx- \int_{\mathbb{R}^3}P|u^-|^sdx\\
\geq &  C \int_{\mathbb{R}^3 }P |w|^s dx- C  \|u^- \|_{L^2}^2\|w\|_{H^{1/2}}^2-  C \|u^- \|_{H^{1/2}}^2.
 \end{aligned}
 $$
Now  for $w=\mathbf{U}_{\mathrm{FW}}^{-1}\left(\begin{array}{l}v \\ 0\end{array}\right)$ with $v \in H^1\left(\mathbb{R}^3, \mathbb{C}^2\right)$ and $\|v\|_{L^2}^2=1$,
we have
$$
w(x)=\mathcal{F}^{-1}\mathbf{U}^{-1}\left(\begin{array}{l}\hat{v} \\ 0\end{array}\right)=\left(\begin{array}{l}\mathcal{F}^{-1}[\Upsilon _-(\xi)\frac{\boldsymbol{\sigma}\cdot\xi}{|\xi|}\hat{v}] \\ \mathcal{F}^{-1}[\Upsilon _+(\xi)\hat{v}]\end{array}\right)(x).
$$
Set $g(x)= \mathcal{F}^{-1}[\Upsilon _+(\xi)\hat{v}](x)$, $f(x)=\mathcal{F}^{-1}[1-\Upsilon _+(\xi)\hat{v}](x)$, then $g=v-f$ and
$$
\int_{\mathbb{R}^3}P|\omega|^s dx\geq \int_{\mathbb{R}^3}P|g|^sdx \geq  2^{1-s}\int_{\mathbb{R}^3}P|v|^sdx - \int_{\mathbb{R}^3}P|f|^sdx
.$$
 Since
 $$
 | 1-\Upsilon _+(\xi)|=\frac{1-\Upsilon _+(\xi)^2}{1+\Upsilon _+(\xi)}=\frac{\Upsilon_-(\xi)^2}{1+\Upsilon _+(\xi)}\leq \Upsilon_-(\xi)^2,
 $$
then we obtain
$$
\begin{aligned}
  \|f\|^2=\|(\lambda(\xi))^{1/2}| 1-\Upsilon _+(\xi)\hat{v}|\|_{L^2}^2\leq \|(\lambda(\xi))^{1/2}| \Upsilon _-(\xi)\hat{v}|\|_{L^2}^2= \frac{1}{2}(\|v\|^2-mc^2\|v\|_{L^2}^2)\leq \frac{1}{4m}\|\nabla v\|_{L^2}^2
,\end{aligned}
$$
which implies

$$
\int_{\mathbb{R}^3 } P |{u}|^sdx\geq C \int_{\mathbb{R}^3 } P |v|^sdx- C \|\nabla v\|_{L^2}^s- C \|u^- \|_{L^2}^2\|w\|_{H^{1/2}}^2-  C \|u^- \|_{H^{1/2}}^2.
$$
This ends the proof.
\end{proof}

\medskip
\noindent\textbf{\bf Lemma \ref{lemm:2.7}}
Let   $\Gamma   $  satisfies $(\Gamma_1),$ then
  $\mathscr{F}: H^{1/2}\to L^p$ is locally Lipschitz, where $p\in [2,3]$.

  \begin{proof}
    It is sufficient to show the case when $p=2$ and $3$. When $p=3$,
    for $ u _1, u _2\in H^{1/2}$, one has
  \begin{align*}
      \mathscr{F}( u _1)-\mathscr{F}( u _2)=\frac{1}{2}\Gamma*\left(K \left(| u _1|^\kappa-| u _2|^\kappa\right)\right)\left(K | u _1|^{\kappa-1}+K| u _2|^{\kappa-1}\right)&\\+\frac{1}{2}\Gamma*\left(K \left(| u _1|^\kappa+| u _2|^\kappa\right)\right)\left( K| u _1|^{\kappa-1}-K| u _2|^{\kappa-1}\right).&
  \end{align*}
    
Hence for $t_i$, $s_i>0$($i=1,2$) satisfying $\frac{1}{3}=\frac{1}{t_i}+\frac{1}{s_i},$ we have 
  $$
  \begin{aligned}
    \|\mathscr{F}( u _1)-\mathscr{F}( u _2)\|_{L^3}\lesssim   \|\Gamma*\left(| u _1|^\kappa-| u _2|^\kappa\right)\|_{L^{t_{1}}}\|| u _1|^{\kappa-1} +| u _2|^{\kappa-1}\|_{L^{s_{1}}}&\\
   +\|\Gamma*\left(| u _1|^\kappa+| u _2|^\kappa\right)\|_{L^{t_{2}}}\|| u _1|^{\kappa-1}-| u _2|^{\kappa-1}\|_{L^{s_{2}} }.&
  \end{aligned}
  $$
By  Young inequality, for $t_i$, $r_i$ satisfying $\frac{1}{t_i}+1= \frac{14-6\kappa}{6}+ \frac{1}{r_i}$, $i=1,2$, one has
$$
\|\Gamma*\left(| u _1|^\kappa-| u _2|^\kappa\right)\|_{L^{t_{1}}}\lesssim \|| u _1|^\kappa-| u _2|^\kappa\|_{L^{r_{1}}},
$$
$$
\|\Gamma*\left(| u _1|^\kappa+| u _2|^\kappa\right)\|_{L^{t_{2}}}\lesssim \left\|| u _1|^\kappa+| u _2|^\kappa\right\|_{L^{r_{2}}}.
$$
Notice
$$|| u _1|^\kappa-| u _2|^\kappa|\lesssim | u _1- u _2|\cdot \left|| u _1|^{\kappa-1}+| u _2|^{\kappa-1}\right|,$$
then for $r_1$, $m$, $n$ satisfying  $\frac{1}{r_1}=\frac{1}{m}+\frac{1}{n}$, one has
$$
\|\Gamma*\left(| u _1|^\kappa-| u _2|^\kappa\right)\|_{L^{t_{1}}}\|| u _1|^{\kappa-1}+| u _2|^{\kappa-1}\|_{L^{s_{1}}}\lesssim\| u _1- u _2\|_{L^m}\|| u _1|^{\kappa-1}+| u _2|^{\kappa-1}\|_{L^{s_{1}}}\|| u _1|^{\kappa-1}+| u _2|^{\kappa-1}\|_{L^n}
$$
where  $\frac{1}{s_1}+\frac{1}{m}+\frac{1}{n}=\kappa-1$. Let $s_1=n$, $m=\max\left\{2,\frac{3}{\kappa-1}\right\}$, then $s_1=n\in \left[\frac{6}{3\kappa-4},\frac{3}{\kappa-1}\right]$.
Hence
$$
\|\Gamma*\left(| u _1|^\kappa-| u _2|^\kappa\right)\|_{L^{t_{1}}}\|| u _1|^{\kappa-1}+| u _2|^{\kappa-1}\|_{L^{s_{1}}}\lesssim \left( \| u _1\|+\| u _2\| \right)^{2\kappa-2}\| u _1- u _2\|.
$$
Similarly, for $s_2$, $m$, $n$ satisfying $\frac{1}{s_2}=\frac{1}{m}+\frac{1}{n}$, one has
$$
\|\Gamma*\left(| u _1|^\kappa+| u _2|^\kappa\right)\|_{L^{t_{2}}}\|| u _1|^{\kappa-1}-| u _2|^{\kappa-1}\|_{L^{s_{2}} }\lesssim \| u _1- u _2\|_{L^m}\|| u _1|^\kappa+| u _2|^\kappa\|_{L^{r_{2}}}\|| u _1|^{\kappa-2}+| u _2|^{\kappa-2}\|_{L^n}
$$
where $\frac{1}{r_2}+\frac{1}{m}+\frac{1}{n}=\kappa-1$. Take $m=3$, $n =\frac{3}{\kappa-2}$, $r_2=\frac{3}{2\kappa-2}$, we get
$$
\|\Gamma*\left(| u _1|^\kappa+| u _2|^\kappa\right)\|_{L^{t_{2}}}\|| u _1|^{\kappa-1}-| u _2|^{\kappa-1}\|_{L^{s_{2}} }\lesssim  \left( \| u _1\|+\| u _2\| \right)^{2\kappa-2}\| u _1- u _2\|.
$$
Consequently,
$$
\|\mathscr{F}( u _1)-\mathscr{F}( u _2)\|_{L^3}\lesssim \left( \| u _1\|+\| u _2\| \right)^{2\kappa-2}\| u _1- u _2\|.
$$
When $p=2$, we can also get
$$
\|\mathscr{F}( u _1)-\mathscr{F}( u _2)\|_{L^2}\lesssim \left( \| u _1\|+\| u _2\| \right)^{2\kappa-2}\| u _1- u _2\|.
$$
Hence by interpolation inequality, for $p\in [2,3]$,
$$
\|\mathscr{F}( u _1)-\mathscr{F}( u _2)\|_{L^p}\lesssim \left( \| u _1\|+\| u _2\| \right)^{2\kappa-2}\| u _1- u _2\|.
$$
This ends the proof.
\end{proof}
\medskip

\noindent \textbf{Future remark.}
Although the method of nonrelativistic limit has effectively addressed some of the difficulties in the normalized solution problem of the nonlinear Dirac equations, there still exist some urgent public issues that need to be resolved, such as: (1) The case where the potential function does not have a compactness assumption, (2) The super-mass-critical case of the nonlinear term, i.e., $s\in (8/3,3)$, (3) How fast the nonrelativistic limit process is.

\section*{Acknowledge}
	This work was partially supported by the National Key R\&D Program of China (Grant No. 2022YFA1005601) and the National Natural Science Foundation of China (Grant No. 12201625 and No. 12271508).

\medskip 
My manuscript has no associated data.

\newpage
\bibliographystyle{plain}
\bibliography{Dirac}

\newpage 
\noindent {Qi Guo\\
School of Mathematics, Renmin University of China, Beijing, 100872, P.R. China\\
e-mail:qguo@ruc.edu.cn}
\medskip
\\
{Pan Chen\\
Institute of Mathematics, Academy of Mathematics and Systems Science\\
University of Chinese Academy of Sciences, Beijing 100190,  P.R.China\\
e-mail:chenpan2020@amss.ac.cn}
\medskip 
\\
{Yanheng Ding\\
Institute of Mathematics, Academy of Mathematics and Systems Science\\
University of Chinese Academy of Sciences, Beijing 100190,  P.R.China\\
e-mail:dingyh@math.ac.cn}
\medskip
\\
{Hua-Yang Wang\\
Institute of Mathematics, Academy of Mathematics and Systems Science\\
University of Chinese Academy of Sciences, Beijing 100190,  P.R.China\\
e-mail:wanghuayang@amss.ac.cn}

\end{document}